\def \N {\mathbb{N}}
\def \R {\mathbb{R}}
\theoremstyle{definition}
\newtheorem{definition}{Definition}[section]
\theoremstyle{plain}
\newtheorem{theorem}[definition]{Theorem}
\newtheorem{proposition}[definition]{Proposition}
\newtheorem{lemma}[definition]{Lemma}
\numberwithin{equation}{section}
\renewcommand{\epsilon}{\varepsilon}
\newcommand{\e}{\varepsilon}
\renewcommand{\leq}{\leqslant}
\renewcommand{\le}{\leqslant}
\renewcommand{\geq}{\geqslant}
\renewcommand{\ge}{\geqslant}
 \title[Logistic diffusion equations for the superposition of operators]{Logistic diffusion equations \\ governed by the superposition\\ of operators of mixed fractional order}
\author[S. Dipierro, E. Proietti Lippi, C. Sportelli and E. Valdinoci]{Serena Dipierro, Edoardo Proietti Lippi, Caterina Sportelli and Enrico Valdinoci}
\address{Department of Mathematics and Statistics
\newline\indent University of Western Australia \newline\indent
35 Stirling Highway, WA 6009 Crawley, Australia.\newline
\newline\indent
\tt serena.dipierro@uwa.edu.au \newline\indent
\tt edoardo.proiettilippi@uwa.edu.au \newline\indent
\tt caterina.sportelli@uwa.edu.au \newline\indent
\tt enrico.valdinoci@uwa.edu.au}
\begin{document}

\maketitle

\begin{abstract}
We discuss the existence of stationary solutions for logistic
diffusion equations of
Fisher-Kolmogoroff-Petrovski-Piskunov type
driven by the superposition of fractional operators in a bounded region with ``hostile'' environmental conditions, modeled by homogeneous external Dirichlet data. 

We provide a range of results on the existence and nonexistence of solutions tied to the spectral properties of the ambient space, corresponding to either survival or extinction of the population.

We also discuss how the possible presence of nonlocal phenomena of concentration and diffusion
affect the endurance or disappearance of the population.

In particular,  we give examples in which
both classical and anomalous diffusion leads to the extinction of the species,
while the presence of an arbitrarily small concentration pattern enables survival.
\end{abstract}

\tableofcontents

\section{Introduction}\label{intro}
In this paper we investigate the existence of stationary solutions of logistic  diffusion equations of
Fisher-Kolmogoroff-Petrovski-Piskunov type involving the continuous superposition of fractional operators of different orders under Dirichlet external conditions. 

{F}rom the biological perspective, the logistic diffusion equation will account for stationary equilibria in terms of the density~$u$ of a certain biological population in competition for environmental resources (and possibly subject to pollination).

The environment considered here corresponds to a safe location (that will be modeled by an open bounded set~$\Omega$ of~$\R^N$) surrounded by a hostile habitat (corresponding to a homogeneous Dirichlet condition for the population density~$u$ in the complement of~$\Omega$). In practice, this kind of hazardous environment occurs frequently in nature, e.g. due to the high abundance of natural enemies of a given biological species, such as parasites and predators. 

The main focus of this paper concerns the persistence or extinction of the given species. Of course, in practice many complex factors determine the chances of survival of a population and one of these factors is the dispersal pattern of the species. In this paper, we account for a composite diffusive mode, in which different individuals can follow random dispersal strategies corresponding to either Gaussian or L\'evy distributions. Moreover, differently from the existing literature, we allow the dispersive modes corresponding to lower L\'evy exponents to take place with the ``opposite sign''.

These features offer significant mathematical challenges and provide valuable biological consequences. Indeed, while the species' movement in the biological literature is usually described as ``directed'' (individuals move from high to low population densities to avoid crowding or to seek environmental resources, see e.g.~\cite{MR707172}, and this corresponds to a parabolic equation in the mathematical framework), we also allow a proportion of individuals to perform an ``inverse'' diffusion (whose mathematical counterpart is a parabolic equation with a time inversion). In this sense, while parabolic equations correspond to the dispersal of an initially concentrated mass, the inversion of time models congregation and clustering phenomena. It is indeed not unreasonable to imagine that gathering may be beneficial for population survival in the presence of a lethal environment surrounding a secure niche, and one of the outcomes of this article is to provide a sound mathematical ground for this intuition.

To allow for a rigorous mathematical formulation, the superposition of the dispersals is modeled by a measure~$\mu$
over the diffusive exponent~$s\in[0,1]$: roughly speaking, the mass assigned by this measure to each exponent~$s$
is proportional to the number of individuals adopting a diffusive strategy with exponent~$s$ (but our model is general enough to allow
both discrete and continuous distributions).

We point out that low and high values of the exponent~$s$ play a somewhat different role in biological dispersal,
according to the different scales taking place in the corresponding parabolic equation (see e.g.~\cite{MR4671814}).
Namely, if~$s=1$ corresponds to the classical Gaussian dispersal, diffusive patterns with exponent~$s$ close to~$1$
correspond to quite regular and localized motions, while the ones related to exponents~$s$ close to~$0$
represent more ``sudden'' movement (since in this situation the corresponding equation can both maintain
a considerable portion of the initial mass in its initial position and sudden deliver another 
considerable portion of the mass far away in space).

Interestingly, in our model we can allow the measure~$\mu$ to be, in fact, a signed measure,
with a positive sign corresponding to high values of~$s$ and possibly negative sign
related to low values of~$s$. This allows for a portion of the population to exhibit quick concentration
phenomena in the safe niche, and we will see that this feature can provide a useful defensive mechanism for the survival of the species.\medskip

We also observe a parallel between the superposition of diffusive operators of different orders and the dimensional analysis of various scales in fluid dynamics. In the fluid dynamic context, the viscous effect, modeled by the Laplace operator, is primarily considered to occur at ``small scales''. At these scales, large vortices in the inertial range remain mostly unaffected by viscosity, allowing energy transfer across scales with minimal dissipation.

In a similar vein, the simultaneous presence of operators of different orders suggests that they may act at distinct scales. For instance, an operator of order~$2s$, corresponding to a Fourier symbol such as~$|\xi|^{2s}$(where~$\xi$ represents a frequency, dimensionally the inverse of a length), is expected to dominate only when~$|\xi|^{2s}$ exceeds a certain threshold~$T$. This dominance implies influence primarily at length scales smaller than~$T^{-\frac1{2s}}$. Notably, this spatial threshold approaches a universal value (of~$1$
in this system of measurement) for small values of the operator of order~$s$, suggesting that superposition operators manifest their effects across various scales, with ballistic diffusion possibly providing a kind of universal pattern.

However, it is important to approach this perspective cautiously, as even our present understanding of fluid dynamics still remains incomplete. Nonetheless, considerations like these can foster useful heuristic insights that, with time and effort, may lead to the development of rigorous theories.
\medskip

The results obtained in this paper can be summarized as follows.
First (as shown in Theorem~\ref{thmlogistica2}
below), we determine the survival chances of a biological species based on environmental resources. In simple terms, if resources are too scarce, the only solution to the stationary logistic equation is the trivial one, meaning there is no equilibrium population but only extinction. On the other hand, if resources are sufficiently abundant, the population survives even in the hostile environment, and the logistic diffusion
equation has a nonnegative, nontrivial
(that is, not identically zero) stationary solution.

These findings align with our intuitive understanding of the model. However, we also provide a precise threshold for survival and extinction, based on the principal eigenvalue of the domain. This approach is not entirely standard both because, in our case, the diffusive operator is not of fixed order, and because the associated Dirichlet form may have negative contributions. Thus, to derive this result, we first need to analyze the linearized equation and formalize the concept of the principal eigenvalue in this context (see Theorem~\ref{propabc}).

Additionally, we examine the role of the negative component of the Dirichlet form
and relate it to the chances of survival of the population. Specifically, as shown in Theorem~\ref{thmlogistica3}, we present examples where this negative component significantly benefits the species' survival. We demonstrate, with quantitative bounds, that in some cases the positive contributions to the Dirichlet form (associated with either classical or anomalous diffusion) lead to extinction, and yet even an arbitrarily small negative component can enable survival. This result has interesting biological implications, suggesting that concentration phenomena (modeled by the negative energy contribution) may promote subsistence. Indeed, while diffusion facilitates exploration of the environment and resource exploitation, it can also lead to the death of individuals wandering into harmful areas. Concentration, by contrast, tends to keep a significant portion of the species away from lethal zones.

In this spirit, we recall that the idea that the survival of a species relies on a balance reached between the homogenizing influence of diffusion and the aspects of population interactions is not new in the biological literature and it is often related to pattern formation (see e.g.~\cite{LEVIN}), but we think that our approach in terms of forward and backward in time diffusion 
(with the latter providing concentrating effects) is new and that the quantitative results obtained here can foster new interdisciplinary interactions.

We also explore the role of anomalous diffusion in survival, both with and without concentration phenomena. Specifically (as shown in Theorem~\ref{thmlogistica4}), we provide examples in which the safe niche
consists of two equal and disjoint regions which do not allow separately for the survival of the species,
but their combination, in tandem with a nonlocal dispersal, permits the persistence of the species. This
is an interesting feature, highlighting the beneficial role of L\'evy flights in biological scenario,
and we show that this phenomenon takes place both in the presence and in the absence of
the concentration properties modeled by the possibly negative contributions of the measure.

It is also interesting to relate diffusive properties of the population
and the size of the survival niche. We will show (see Theorem~\ref{thmduemisure})
that when the safe region is suitably small
the species with small diffusive exponents may have a survival advantage, while
in large regions larger diffusive exponents become favorable. This phenomenon is not completely
intuitive and, possibly, it is related to the fact that
to survive in small niches in hostile habitats the best is to preserve a portion of the population
inside the safe domain, which is guaranteed by the small L\'evy exponents (albeit of small size, instead,
the wanderings provided by the Gaussian diffusion may risk to set the population in the hostile territory).
Instead, in large domains, the fact that strong L\'evy flights push a considerable proportion
of the species in the lethal area ``close to infinity'' makes it plausible to seek
advantage in smaller scale diffusion patterns.\medskip

Let us now dive into some mathematical details of our model.
The continuous superposition of (possibly) fractional operators occurs through operators of the form
\begin{equation}\label{superpositionoperator}
L_\mu u:=\int_{[0, 1]} (-\Delta)^s u\, d\mu(s),
\end{equation}
defined via a signed measure
\begin{equation}\label{misure+-}
\mu:=\mu^+ -\mu^-.
\end{equation}
Here~$\mu^+$ and~$\mu^-$ denote two nonnegative finite Borel measures over the fractional range~$[0, 1]$. 

As customary, the notation~$(-\Delta)^s$ is reserved to the fractional Laplacian, defined, for all~$s\in(0,1)$, as
\begin{equation}\label{AMMU0} (- \Delta)^s\, u(x): = c_{N,s}\int_{\R^N} \frac{2u(x) - u(x+y)-u(x-y)}{|y|^{N+2s}}\, dy.
\end{equation}
The positive normalizing constant~$c_{N,s}$
is chosen in such a way that, for~$u$ smooth and rapidly decaying, the Fourier transform of~$(-\Delta)^s u$ returns~$(2\pi|\xi|)^{2s}$ times the Fourier transform of~$u$ and provides consistent limits as~$s\nearrow1$ and as~$s\searrow0$, namely
\[
\lim_{s\nearrow1}(-\Delta)^su=(-\Delta)^1u=-\Delta u
\qquad{\mbox{and}}\qquad
\lim_{s\searrow0}(-\Delta)^s u=(-\Delta)^0u=u,
\]
see e.g.~\cite{MR2944369} (actually, the precise value of this constant, which will be recalled in~\eqref{defcNs}, does not play
a major role here and one can consider it just a convenient normalization to
allow the limit cases~$s=0$ and~$s=1$ to be included in a unified notation).
\medskip

The notation in~\eqref{superpositionoperator} is conveniently general to allow the simultaneous
treatment of several cases of interest in a unified way. Indeed, when~$\mu$ is the
Dirac measure at~$1$, the operator in~\eqref{superpositionoperator} boils down to (minus) the classical Laplacian, and, similarly, when~$\mu$ is the
Dirac measure at~$s\in(0, 1)$, it reduces to~$(-\Delta)^s$.

As such, the notation in~\eqref{superpositionoperator} allows us to consider also superposition operators, such as~$-\Delta+(-\Delta)^s$, or~$(-\Delta)^{s_1}+(-\Delta)^{s_2}$, with~$s$, $s_1$, $s_2\in(0,1)$, and even
more generally
\begin{equation}\label{aodjfnvhsnd-01}\sum_{k=1}^K c_k\,(-\Delta)^{s_k},\end{equation}
with~$c_k\in(0,+\infty)$ and~$s_k\in[0,1]$.

Actually, even infinite series can be taken into account (provided that they are convergent).

Also the case of continuous superposition can be taken into account, when the measure
is absolutely continuous with respect to the Lebesgue measure in~$[0,1]$. Namely, the choice~$d\mu:= \phi(s)\,ds$ in~\eqref{superpositionoperator}, for some nonnegative and integrable function~$\phi$, returns
\begin{equation}\label{aodjfnvhsnd-02}\int_0^1 (-\Delta)^s u(x)\,\phi(s)\,ds.\end{equation}

Of course, superpositions of~\eqref{aodjfnvhsnd-01} and~\eqref{aodjfnvhsnd-02} are possible as well,
leading to quite complex expressions of the form
$$ \sum_{k=1}^K c_k\,(-\Delta)^{s_k}u(x)+
\int_0^1 (-\Delta)^s u(x)\,\phi(s)\,ds.$$

But, even more interestingly, our setting, as specified in~\eqref{misure+-}, allows for
signed measures. This enables us to comprise, for example, operators of the form~$-\Delta-\e(-\Delta)^s$, or~$(-\Delta)^{s_1}-\e(-\Delta)^{s_2}$, with~$s$, $s_1$, $s_2\in(0,1)$ and~$\e>0$ (that will be taken
sufficiently small in the forthcoming setting), or more generally
$$ \sum_{k=1}^K c_k\,(-\Delta)^{s_k}-\sum_{k=K+1}^{K+M} c_k\,(-\Delta)^{s_k},$$
with~$c_k\in(0,+\infty)$ (and~$c_{K+1},\dots,c_{K+M}$ suitably small)
and~$1\ge s_1>s_2>\dots>s_{K+M}\ge0$.

The continuous superposition in~\eqref{aodjfnvhsnd-02} can also be considered
as a signed measure, for example by supposing that~$\phi\ge0$ in~$[\overline s,1]$
and~$\phi\le0$ in~$[0,\overline s]$, for some~$\overline s\in(0,1]$ (and a sufficiently small contribution from the negative component).

The possible presence of this negative component is interesting from the biological
point of view, modeling the superposition of diffusive and concentration phenomena,
but also challenging in terms of the corresponding mathematical theory, e.g. due to the possible failure of the maximum principle.

{F}rom this perspective, the setting in~\eqref{superpositionoperator} comes in handy, providing
a compact notation to treat simultaneously all the cases described above, and further cases as well,
essentially at no extra cost (in any case, the results that we obtain here are new
for all these specific cases as well).
\medskip

Let us now introduce some specific technical hypotheses that are needed
to provide a solid functional framework to the above setting.
We assume that there exist~$\overline s \in (0, 1]$ and~$\gamma\ge 0$ such that
\begin{equation}\label{mu0}
\mu^+\big([\overline s, 1]\big)>0,
\end{equation}
\begin{equation}\label{mu1}
\mu^-\big([\overline s, 1]\big)=0
\end{equation}
and
\begin{equation}\label{mu2}
\mu^-\big([0, \overline s)\big)\le\gamma\mu^+\big([\overline s, 1]\big).
\end{equation}

Broadly speaking, conditions~\eqref{mu0} and~\eqref{mu1} assert that the components of the signed measure tied to higher fractional exponents are positive, whereas assumption~\eqref{mu2} requires that the negative components (if any) must be appropriately reabsorbed into the positive ones
(as a matter of fact, to realize this, we will choose~$\gamma$ sufficiently small).

Also, we observe that, by~\eqref{mu0}, 
\begin{equation}\label{scritico}
{\mbox{there exists~$s_\sharp \in [\overline s, 1]$ such that }} \mu^+ ([s_\sharp, 1])> 0.
\end{equation}
As described in the forthcoming Proposition~\ref{embeddingsXOmega},  the exponent~$s_\sharp$ plays the role of a critical exponent. Therefore, roughly speaking, while some arbitrariness is allowed in the choice of~$s_\sharp$ here above,
the results obtained will be stronger if one takes~$s_\sharp$ ``as large as possible'', but still satisfying condition~\eqref{scritico} (that is, in all results
it will be possible to simply choose~$s_\sharp :=\overline s$, but the possibility of choosing
a larger~$s_\sharp$, when available, leads to improved quantitative bounds).

This general setting was recently presented in~\cite{MR4736013} and it has been also extended to the case of superposition of fractional~$p$-Laplacians, for a given exponent~$p$ (see~\cite{DPSV2} and also~\cite{MR4793906}), and also to the superposition of fractional operators under Neumann boundary conditions (see~\cite{TUTTI, TUTTI2}). See also~\cite{TUTTI3}, where the 
superposition is made over fractional~$p$-Laplacians of different orders, namely for~$s\in[0,1]$ and~$p\in (1,N)$.
\medskip

We now introduce each result individually and thoroughly.

\subsection{The eigenvalue problem}
As a preliminary topic, needed to study the corresponding nonlinear
logistic equation of
Fisher-Kolmogoroff-Petrovski-Piskunov type,
we address the study of the Dirichlet eigenvalue problem of the operator~$L_\mu$. That is, we let~$\lambda\in\R$ and we take into account the problem
\begin{equation}\label{eigenvalueproblem}
\begin{cases}
L_\mu u= \lambda u &\mbox{ in } \Omega,\\
u=0 &\mbox{ in } \R^N\setminus\Omega.
\end{cases}
\end{equation}
We point out that, in our analysis, due to the presence of possible contributions coming from the negative components of the measure~$\mu$ in~\eqref{misure+-},  the study of~\eqref{eigenvalueproblem} is quite challenging. However,  we are able to prove that problem~\eqref{eigenvalueproblem} possesses a nontrivial solution, corresponding to the first eigenvalue of~$L_\mu$.

Also, if~\eqref{mu2} is paired with an additional assumption, then we can conclude that the first eigenvalue is simple and it admits a nonnegative eigenfunction.

In more detail, we consider the following additional condition.
We take~$\Omega$ to be an open, bounded subset
of~$\R^N$ with Lipschitz boundary and we let~$R>0$ be such that~$\Omega \subset B_R$.
We pick~$\overline\gamma>0$, to be assumed conveniently small in dependence of~$N$, $R$ and~$\overline s$, and we suppose that there exists~$\delta\in (0, 1-\overline{s}]$ such that
\begin{equation}\label{mu2forte}
\mu^-\big((0, \overline s)\big)\le\overline\gamma\,\delta\,\mu^+\big([\overline s, 1-\delta]\big).\end{equation}
A quantitative value for~$\overline\gamma$ can be explicitly determined (see~\eqref{overlinegamma}), but
its precise value does not alter the qualitative features that we are now discussing.

Roughly speaking, in condition~\eqref{mu2forte}
we require that the negative components of the signed measure~$\mu$ on~$[0,\overline{s})$ (if any) must be properly ``reabsorbed" into the positive contributions in~$[\overline{s},1-\delta]$
(i.e., away from the contribution coming from the Laplacian). 
In a nutshell, this hypothesis requires, in a quantitative way, that the negative component of the measure is reabsorbed into a positive component of nonlocal type (we think it is an interesting open problem to decide whether condition~\eqref{mu2forte} can be dropped). 

Also, we point out that when~$\overline\gamma\delta\le\gamma$, condition~\eqref{mu2forte} entails~\eqref{mu2}.
Conversely, \eqref{mu2} does not imply~\eqref{mu2forte} since the contribution on the right-hand side of~\eqref{mu2} could come only for the local component (namely, if~$\mu^+$ is a singular measure concentrated at~$s=1$).

In Appendix~\ref{appendixesempio},
we will present an explicit computation to show how
assumptions~\eqref{mu2} and~\eqref{mu2forte} can be checked in a
concrete case. 

Moreover, we will discuss the need of the additional assumption~\eqref{mu2forte} in Proposition~\ref{propositionfails} in which we provide an explicit example
of what happens if condition~\eqref{mu2forte} is violated.

In our setting , condition~\eqref{mu2forte} is used to ensure the positivity of the first eigenfunction associated with the equation. 

For this, we define the space~$X(\Omega)$ as the set of functions which belong to~$X(\R^N)$ and vanish outside~$\Omega$
(the precise functional setting will be introduced at the beginning of
Section~\ref{sectionpreliminaries}, see in particular~\eqref{normadefinizione}
and~\eqref{XOmegadefn}).
Moreover, we denote by~$[u]_s$ the Gagliardo semi-norm of~$u$, with the
slight abuse of notation that~$[u]_0=\|u\|_{L^2(\Omega)}$ and~$[u]_1=\|\nabla u\|_{L^2(\Omega)}$ (see
the forthcoming formula~\eqref{seminormgagliardo} for full details).

\begin{theorem}\label{propabc}
Let~$\Omega$ be an open and bounded subset of~$\R^N$ with Lipschitz boundary.

Then, there exists~$\gamma_0>0$, depending only on~$N$ and~$\Omega$,
such that if~$\gamma\in[0,\gamma_0]$ the following statements hold true.

Let~$\mu$ satisfy~\eqref{mu0}, \eqref{mu1} and~\eqref{mu2}. Let~$s_\sharp$ be as in~\eqref{scritico}. 

Problem~\eqref{eigenvalueproblem} admits a positive eigenvalue~$\lambda_\mu(\Omega)$ given by
\begin{equation}\label{primoautovalore}
\lambda_\mu(\Omega):=\min_{u\in X(\Omega)\setminus\{0\}} \dfrac{\displaystyle\int_{[0, 1]} [u]^2_s \, d\mu^+(s) - \displaystyle\int_{[0, \overline s)} [u]^2_s \, d\mu^-(s)}{\displaystyle\int_\Omega |u(x)|^2 \,dx}.
\end{equation}
An eigenfunction~$e_\mu$ corresponding to the eigenvalue~$\lambda_\mu(\Omega)$
attains the minimum in~\eqref{primoautovalore}.

If in addition~$\mu$ satisfies~\eqref{mu2forte}, then
every eigenfunction corresponding to the eigenvalue~$\lambda_\mu(\Omega)$ in~\eqref{primoautovalore} does not change sign and~$\lambda_\mu(\Omega)$ is simple.
\end{theorem}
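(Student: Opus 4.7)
The plan is to apply the direct method of the calculus of variations to the Rayleigh quotient in~\eqref{primoautovalore}, then run a symmetrization argument to analyse the sign of eigenfunctions under the stronger hypothesis~\eqref{mu2forte}.

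\emph{Coercivity, attainment and positivity of $\lambda_\mu(\Omega)$.} Denote the numerator in~\eqref{primoautovalore} by $\mathcal{E}(u)$. Since $\Omega\subset B_R$ is bounded and, by~\eqref{mu0}, $\mu^+$ charges $[\overline s,1]$, a comparison of Gagliardo seminorms on bounded sets gives $[u]_{s'}^2\leq C(N,R,\overline s)[u]_{s_\sharp}^2$ for $s'\in[0,\overline s)$ and $u\in X(\Omega)$, together with the estimate $[u]_{s_\sharp}^2\leq C\,\mu^+([s_\sharp,1])^{-1}\int_{[0,1]}[u]_s^2\,d\mu^+(s)$. Integrating the first bound against $d\mu^-(s')$ and invoking~\eqref{mu2} with $\gamma\leq\gamma_0(N,\Omega)$ small enough yields
\[
\mathcal{E}(u)\geq\tfrac{1}{2}\int_{[0,1]}[u]_s^2\,d\mu^+(s),
\]
so $\mathcal{E}$ is coercive, and the fractional Poincar\'e inequality in $H^{s_\sharp}_0(\Omega)$ further gives $\mathcal{E}(u)\geq c\|u\|_{L^2(\Omega)}^2$ with $c>0$, whence $\lambda_\mu(\Omega)>0$. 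For a minimizing sequence $u_n$ with $\|u_n\|_{L^2(\Omega)}=1$, coercivity bounds $\{u_n\}$ in $X(\Omega)$; the compact embedding $X(\Omega)\hookrightarrow\hookrightarrow L^2(\Omega)$ of Proposition~\ref{embeddingsXOmega} produces, up to subsequences, strong $L^2$-convergence and weak $X(\Omega)$-convergence to some $e_\mu$ with $\|e_\mu\|_{L^2(\Omega)}=1$. The positive part of $\mathcal{E}$ is weakly lower semicontinuous (weak l.s.c.\ of each $[\cdot]_s^2$ plus Fatou against $\mu^+$); for the negative part, the compact embedding $H^{s_\sharp}_0(\Omega)\hookrightarrow\hookrightarrow H^{s'}_0(\Omega)$ for $s'<s_\sharp$ gives $[u_n]_{s'}\to[e_\mu]_{s'}$ for every $s'<\overline s$, and dominated convergence against $d\mu^-(s')$ concludes. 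Hence $e_\mu$ attains the minimum, and a standard Lagrange-multiplier computation converts this into $L_\mu e_\mu=\lambda_\mu(\Omega)\,e_\mu$ in the weak sense.

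\emph{Sign of eigenfunctions under~\eqref{mu2forte}.} The key identity is
\[
[u]_s^2-[|u|]_s^2 \,=\, 4c_{N,s}\int_{\R^N\times\R^N}\frac{u_+(x)\,u_-(y)}{|x-y|^{N+2s}}\,dx\,dy \,\geq\, 0,
\]
strictly positive for every $s>0$ as soon as $u$ changes sign. Since $u$ is supported in $\Omega\subset B_R$, for $0\leq s'<s\leq 1-\delta$ one has $|x-y|^{-N-2s'}\leq(2R)^{2(s-s')}|x-y|^{-N-2s}$ on the integration domain, hence
\[
[u]_{s'}^2-[|u|]_{s'}^2 \,\leq\, \tfrac{c_{N,s'}}{c_{N,s}}(2R)^{2(s-s')}\bigl([u]_s^2-[|u|]_s^2\bigr).
\]
Because $c_{N,s}\sim (1-s)$ as $s\nearrow 1$, the ratio $c_{N,s'}/c_{N,s}$ is bounded on $[0,\overline s)\times[\overline s,1-\delta]$ by $C(N,R,\overline s)/\delta$. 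Averaging this pointwise bound against $\mu^+([\overline s,1-\delta])^{-1}d\mu^+(s)$ on $[\overline s,1-\delta]$ and integrating in $s'$ against $d\mu^-(s')$ on $(0,\overline s)$ (the $s'=0$ contribution vanishes, as $[u]_0=[|u|]_0$), hypothesis~\eqref{mu2forte} delivers
\[
\int_{(0,\overline s)}\bigl([u]_{s'}^2-[|u|]_{s'}^2\bigr)\,d\mu^-(s') \,\leq\, C(N,R,\overline s)\,\overline\gamma\int_{[0,1]}\bigl([u]_s^2-[|u|]_s^2\bigr)\,d\mu^+(s).
\]
Choosing $\overline\gamma$ so that $C\overline\gamma\leq\tfrac12$ yields $\mathcal{E}(|u|)\leq\mathcal{E}(u)$, with strict inequality whenever $u$ changes sign. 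Hence every minimizer of~\eqref{primoautovalore} has constant sign, and since any eigenfunction associated with $\lambda_\mu(\Omega)$ is such a minimizer, the nonnegativity claim follows.

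\emph{Simplicity and main obstacle.} Let $u,v$ be first eigenfunctions, taken nonnegative and $L^2$-normalized by the previous step. For every $\alpha\in\R$, linearity gives $L_\mu(u-\alpha v)=\lambda_\mu(\Omega)(u-\alpha v)$, so $u-\alpha v$ is either zero or a first eigenfunction and, by what we just proved, has constant sign. Picking $\alpha_0:=\int_\Omega u\,dx\big/\int_\Omega v\,dx>0$ makes $\int_\Omega(u-\alpha_0 v)\,dx=0$; combined with constant sign, this forces $u\equiv\alpha_0 v$ in $\Omega$, proving simplicity. The main obstacle is the kernel-comparison step at the heart of the sign argument: this is exactly where~\eqref{mu2forte} enters and where the factor $\delta$ is indispensable to absorb the degeneration $c_{N,s}\to 0$ as $s\nearrow 1$. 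Once this estimate is in hand, the remaining variational arguments are relatively standard.
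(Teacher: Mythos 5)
Your proposal is correct and follows essentially the same route as the paper's proof: direct minimization of the Rayleigh quotient with the $\mu^-$-contribution reabsorbed into the $\mu^+$-energy (the content of Proposition~\ref{crucial}), strict decrease of the energy under $u\mapsto|u|$ obtained from the kernel comparison $|x-y|^{-N-2s'}\le(2R)^{2(s-s')}|x-y|^{-N-2s}$ together with the lower bound $c_{N,s}\ge\underline{c}_N\,\delta$ on $[\overline s,1-\delta]$ (exactly the mechanism of Lemmata~\ref{lemmacns}, \ref{c1mumeno.le} and~\ref{lemmamodulo}), and simplicity deduced from the constant sign of differences of first eigenfunctions. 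The remaining discrepancies are cosmetic: the paper passes to the limit in the $\mu^-$-term along the minimizing sequence via Gagliardo--Nirenberg interpolation (Lemma~\ref{lemmalimitemumeno}) rather than compact embeddings plus dominated convergence, and in the simplicity step it normalizes the two eigenfunctions in $L^2$ and compares $e_\mu^2$ with $f^2$ rather than matching their integrals.
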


Notice that
when the operator~$L_\mu$ reduces to~$-\Delta$, problem~\eqref{eigenvalueproblem} is classical (see e.g.~\cite[Chapter~6]{MR1625845} and also, in further generality,
\cite{MR437924}). 
The case~$L_\mu=(-\Delta)^s$, with~$s\in(0,1)$, has also been extensively analyzed in the literature, see e.g.~\cite[Proposition~9]{MR3002745}.

While some of the results presented in Theorem~\ref{propabc}
can be seen as part of the general spectral theory of operators (see
e.g.~\cite[Sections~6.10 and~6.11]{MR3753707} and, more generally,~\cite{MR679140}), we provide here a thorough and self-contained characterization of the part of the theory which is specifically needed for the analysis of the logistic diffusion equation carried out in this work.

\subsection{Logistic equations of
Fisher-Kolmogoroff-Petrovski-Piskunov type}
In the classical logistic diffusion model, the increase in population is thought to be directly related to the availability of environmental resources, with growth influenced by a nonnegative function here denoted by~$\sigma$. Moreover, population extinction occurs as resources become exhausted, represented by a nonnegative function
modeling competition for survival that will be denoted by~$\nu$. It is also assumed that the population disperses randomly. 
These observations led to the study of an 
evolution equation of the form
$$\partial_t u=  \Delta u + (\sigma-\nu u)u$$
and its stationary counterpart
\begin{equation}\label{logisicalaplaciano}
\Delta u + (\sigma-\nu u)u=0,
\end{equation}
see e.g.~\cite{VER45, MP12, PR20}.

A nonlocal version of the equation in~\eqref{logisicalaplaciano}
has been put forth in~\cite{MR3579567}, where the authors
consider the cases of bounded subsets with Dirichlet data and
periodic environments and
the leading operator is the fractional Laplacian. 
This is justified by the fact that nonlocal dispersal strategies have been observed in nature and may be related to optimal hunting strategies and adaptation to the environment, see e.g.~\cite{VAB96, HQD10}
an also~\cite{MR4484924, MR4645663}
for an analysis of the so-called L\'evy flight foraging hypothesis.

The case of Neumann boundary conditions and mixed order
operators of the form~$-\alpha\Delta+\beta(-\Delta)^s$, with~$\alpha$, $\beta\ge0$ and~$\alpha+\beta>0$, has been considered in~\cite{MR4651677}.
See also~\cite{06226233, 06858645}, where the authors consider a
logistic diffusion equation with Neumann boundary conditions
related to the spectral fractional Laplacian
and study best dispersal strategies in connection with the principal eigenvalue
of the operator taken into account.
The role of Allee effects has also been taken into account in the literature, see~\cite{MR4764600}
and the references therein.

In this paper we will focus on a variation of the logistic diffusion equation which allows for the possibility of (possibly) infinite fractional concentration and diffusion processes to occur.
To be more precise,  we consider the stationary logistic equation of
Fisher-Kolmogoroff-Petrovski-Piskunov type
\[
L_\mu u=(\sigma-\nu u)u+\tau (J\ast u),
\]
being~$L_\mu$ the operator introduced in~\eqref{superpositionoperator}. 

We point out that this equation is far more general than~\eqref{logisicalaplaciano}, because it allows for phenomena of concentration and diffusion which can happen at different scales.
This is achievable through the overall versatility of the measure~$\mu$ that we take into account here.

We remark that considering negative signs in diffusive operators goes beyond the mere mathematical interest. Indeed, elliptic operators with negative sign can be useful to model concentration phenomena (e.g., in the heat equation with inverted time direction).  Also, in mathematical biology,  the movement of an organism as a consequence of a chemical stimulus and specifically in dependence of the increasing or decreasing concentration of a particular substance is called ``chemotaxis". A simple model in which a Laplacian ``with the wrong sign" naturally appears is that of a biological species subject to chemotaxis when the chemotactic agent is proportional to the logarithm of the density of population. On this topic, we refer the interested reader to~\cite[Chapter~2]{ZZLIB}.
\medskip

In our setting,  the environment with hostile surroundings is illustrated through the Dirichlet problem
\begin{equation}\label{problemalogistico}
\begin{cases}
L_\mu u=(\sigma-\nu u)u+\tau (J\ast u) &\mbox{in } \Omega,\\
u=0 	&\mbox{in } \R^N \setminus \Omega,\\
u\ge 0 &\mbox{in } \R^N.
\end{cases}
\end{equation}
Here, we take~$s_\sharp$ as in~\eqref{scritico} and we set
\[
2^*_{s_\sharp}:=\begin{cases}
\dfrac{2N}{N-2s_\sharp} &\mbox{ if } N>2s_\sharp\\
+\infty &\mbox{ if } N\le 2s_\sharp.
\end{cases}
\]
Moreover, we will assume that~$\sigma$, $\nu:\Omega\to \R$ and~$J:\R^N\to\R$ are nonnegative functions satisfying
\begin{equation}\label{ipotesilogistica1}
\sigma\in L^m(\Omega)\, \mbox{ for some } m>\frac{2^*_{s_\sharp}}{2^*_{s_\sharp}-2}
\quad \mbox{and} \quad \frac{(\sigma+\tau)^3}{\nu^2}\in L^1(\Omega).
\end{equation}

Furthermore, we suppose that~$\tau\in[ 0,+\infty)$ and that~$J$ satisfies
\begin{equation}\label{proprietàJ}
\int_{\R^N}J(x)\,dx=1  \quad \mbox{and} \quad J(-x)=J(x)\,\mbox{ for any }x\in \R^N.
\end{equation}
In our setting, the parameter~$\tau$ and
the interaction kernel~$J$ describe an additional birth rate,
that is due, for instance, to pollination.

We aim to establish an existence theory for nontrivial solutions of~\eqref{problemalogistico}. Also, we will compare the local and nonlocal dynamics of the population that diffuses according to the operator~$L_\mu$, analyzing their effectiveness in relation to the resources and the set~$\Omega$.

We first provide a general existence result:

\begin{theorem}\label{thmlogistica1}
Let~$\Omega$ be an open and bounded subset of~$\R^N$ with Lipschitz boundary.

Then, there exists~$\gamma_0>0$, depending only on~$N$ and~$\Omega$,
such that if~$\gamma\in[0,\gamma_0]$ the following statement holds true.

Let~$\mu$ satisfy~\eqref{mu0}, \eqref{mu1}, \eqref{mu2} and~\eqref{mu2forte}.
Let~$\sigma$, $ \nu$ and~$ J$ satisfy~\eqref{ipotesilogistica1} and~\eqref{proprietàJ}.

Then, there exists a nonnegative weak solution of~\eqref{problemalogistico}.
\end{theorem}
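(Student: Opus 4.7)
The plan is to construct a weak solution as a minimizer of a suitably truncated energy functional. Specifically, I would consider
$$\mathcal{E}(u):=\frac{1}{2}\int_{[0,1]}[u]_s^2\,d\mu(s)-\int_\Omega\!\left(\frac{\sigma}{2}u_+^2-\frac{\nu}{3}u_+^3\right)dx-\frac{\tau}{2}\int_\Omega u_+\,(J*u_+)\,dx,\qquad u\in X(\Omega),$$
with $u_+:=\max(u,0)$. Truncating the nonlinearities by $u_+$ serves two purposes: first, it keeps $\mathcal{E}$ bounded below irrespective of the size of $\tau$, since on $\{u\le 0\}$ the functional reduces to $\tfrac12\int[u]_s^2\,d\mu$; second, it forces every term on the right-hand side of the Euler--Lagrange equation to vanish when tested against $u_{*,-}$, which is the key lever for the nonnegativity step.

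I would then apply the direct method. Coercivity of the quadratic form $Q(u):=\int_{[0,1]}[u]_s^2\,d\mu$ on $X(\Omega)$, in the form $Q(u)\ge c_0\,[u]_{s_\sharp}^2$ for $\gamma\le\gamma_0$, is a byproduct of the spectral analysis supporting Theorem~\ref{propabc}: the sub-critical interpolation $[v]_s\le C[v]_{s_\sharp}$ valid on $X(\Omega)$ for $s<\overline s$ (cf.\ Proposition~\ref{embeddingsXOmega}) together with~\eqref{mu2} lets one reabsorb the negative component of $Q$ into a small fraction of its positive component. For the concave-cubic nonlinearity, the hypothesis $(\sigma+\tau)^3/\nu^2\in L^1(\Omega)$ in~\eqref{ipotesilogistica1} is tailor-made for Young's inequality with conjugate exponents $(3,3/2)$: both $\sigma u_+^2$ and $\tau u_+(J*u_+)\le\tau\|u_+\|_{L^2}^2$ (since $\|J\|_{L^1}=1$) can be absorbed into a small fraction of $\int\nu u_+^3$ modulo an $L^1$ remainder depending only on the data, so $\mathcal{E}(u)\to+\infty$ as $\|u\|_{X(\Omega)}\to\infty$. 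Weak lower semi-continuity combines Fatou's lemma on the $d\mu^+$-part of $Q$, strong continuity of the $d\mu^-$-part (whose semi-norms are sub-critical, hence compactly controlled by $[u]_{s_\sharp}$), and strong $L^p$-convergence of the nonlinear terms via the compact embedding $X(\Omega)\hookrightarrow L^p(\Omega)$ for $p<2^*_{s_\sharp}$ afforded by~\eqref{ipotesilogistica1}. A minimizer $u_*\in X(\Omega)$ thus exists, and the first-variation identity reads
$$\int_{[0,1]}\langle u_*,\phi\rangle_s\,d\mu=\int_\Omega(\sigma u_{*,+}-\nu u_{*,+}^2)\phi\,dx+\tau\int_\Omega\phi\,\mathbf{1}_{\{u_*>0\}}(J*u_{*,+})\,dx\qquad\forall\phi\in X(\Omega),$$
where $\langle\cdot,\cdot\rangle_s$ denotes the bilinear form polarizing $[\cdot]_s^2$.

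The main obstacle will be proving that $u_*\ge 0$, since the signed character of $\mu$ voids both the classical maximum principle and the ``replace $u$ by $|u|$'' symmetrization argument. The strategy is to test the Euler--Lagrange identity with $\phi=u_{*,-}\in X(\Omega)$: by the design of the truncation every term on the right side vanishes, leaving $\int\langle u_*,u_{*,-}\rangle_s\,d\mu=0$. Splitting $u_*=u_{*,+}-u_{*,-}$ and invoking the elementary pointwise bound $\langle v_+,v_-\rangle_s\le 0$ (immediate from the integral representation of the fractional semi-norm), this rearranges to $Q(u_{*,-})=\int\langle u_{*,+},u_{*,-}\rangle_s\,d\mu$. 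The strengthened hypothesis~\eqref{mu2forte}, a Cauchy--Schwarz step in $d\mu^-$, and the sub-critical interpolation dominate the right-hand side in absolute value by $C\overline\gamma\,[u_{*,+}]_{s_\sharp}[u_{*,-}]_{s_\sharp}$; coupling with the coercivity $Q(u_{*,-})\ge c_0[u_{*,-}]_{s_\sharp}^2$ and with the minimality relation $\mathcal{E}(u_{*,+})\ge\mathcal{E}(u_*)$ (which provides an inequality in the opposite direction, since replacing $u_*$ by $u_{*,+}$ only modifies the quadratic part of the energy) closes the loop for $\gamma_0,\overline\gamma$ small enough and forces $u_{*,-}\equiv 0$. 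Once nonnegativity is secured, the indicator $\mathbf{1}_{\{u_*>0\}}$ in the Euler--Lagrange equation drops out on $\{u_*>0\}$, while the set $\{u_*=0\}\cap\Omega$ is handled either by a Hopf-type strict-positivity statement for $L_\mu$ under~\eqref{mu2forte} or by the trivial alternative $u_*\equiv 0$ (which itself solves~\eqref{problemalogistico}), yielding the desired nonnegative weak solution.
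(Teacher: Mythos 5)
Your overall architecture (direct minimization of an energy functional, coercivity via Young's inequality with exponents $3$ and $3/2$ using $(\sigma+\tau)^3/\nu^2\in L^1(\Omega)$, reabsorption of the $\mu^-$-part of the quadratic form via Proposition~\ref{crucial}, Fatou plus compact embeddings for lower semicontinuity) matches the paper's Proposition~\ref{propesistenza}. The paper, however, does not truncate: it minimizes $E$ with the term $\int_\Omega\nu|u|^3/3$ and obtains nonnegativity by the replacement $u\mapsto|u|$, which decreases the quadratic part by Lemma~\ref{lemmamodulo} and leaves the lower-order terms unchanged. Your truncation-plus-test-function route is a legitimate variant, but as written it has two genuine gaps. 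First, the closing of the nonnegativity step does not work. Testing with $\phi=u_{*,-}$ correctly yields $Q(u_{*,-})=\int\langle u_{*,+},u_{*,-}\rangle_s\,d\mu$, but the chain you propose --- Cauchy--Schwarz in $d\mu^-$, interpolation, and the bound $C\overline\gamma\,[u_{*,+}]_{s_\sharp}[u_{*,-}]_{s_\sharp}$, combined with coercivity --- only gives $[u_{*,-}]_{s_\sharp}\le C'\overline\gamma\,[u_{*,+}]_{s_\sharp}$, which does not force $u_{*,-}\equiv0$; and the minimality relation $\mathcal{E}(u_{*,+})\ge\mathcal{E}(u_*)$ reduces, after using the Euler--Lagrange identity, to $\tfrac12 Q(u_{*,-})\ge0$, i.e.\ to no new information. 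What actually closes the argument is the \emph{pointwise} kernel domination
\begin{equation*}
\int_{(0,\overline s)}\frac{c_{N,s}}{|x-y|^{N+2s}}\,d\mu^-(s)<\int_{(0,1)}\frac{c_{N,s}}{|x-y|^{N+2s}}\,d\mu^+(s)\qquad\text{for }x,y\in\Omega,
\end{equation*}
which is the paper's~\eqref{crucial.eq.di} (derived from~\eqref{mu2forte} via Lemmas~\ref{lemmacns} and~\ref{c1mumeno.le}). Since the cross term equals $-\iint_{\Omega\times\Omega}\big(u_{*,+}(x)u_{*,-}(y)+u_{*,+}(y)u_{*,-}(x)\big)$ integrated against the difference of these two kernels, it is nonpositive, whence $Q(u_{*,-})\le0$ and $u_{*,-}\equiv0$. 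You need this lemma (or an equivalent), not a smallness estimate.

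Second, after nonnegativity your Euler--Lagrange equation still carries the factor $\mathbf 1_{\{u_*>0\}}$ in front of $J\ast u_*$, and the discrepancy with Definition~\ref{defweaksolution} lives on $\{u_*=0\}\cap\Omega$, where $J\ast u_*$ is generally strictly positive. The fix you offer --- a ``Hopf-type strict-positivity statement for $L_\mu$'' --- is not available here and would be delicate to establish, precisely because the signed measure can destroy the maximum principle (as the paper itself emphasizes); and the alternative ``$u_*\equiv0$'' does not cover a nontrivial nonnegative minimizer vanishing on a set of positive measure. A correct repair within your framework is to exploit both one-sided directional derivatives $\mathcal E'(u_*;\pm\phi)\ge0$: their sum forces $\int_{\{u_*=0\}}|\phi|\,(J\ast u_{*,+})\,dx\le0$, hence $J\ast u_*=0$ a.e.\ on $\{u_*=0\}\cap\Omega$ and the indicator can be dropped. (The paper avoids the issue entirely by not truncating.) A minor further point: strong convergence of $\int_\Omega\nu u_n^3$ along the minimizing sequence is not justified, since~\eqref{ipotesilogistica1} imposes no integrability on $\nu$ itself; only Fatou's lemma is available for that term, which suffices because it enters the functional with a favorable sign.
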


The study of solutions generated by Theorem~\ref{thmlogistica1} can be enhanced by using the spectral analysis developed in Section~\ref{sectioneigenvalues}.
In the next result, we prove that the existence of nontrivial solutions of~\eqref{problemalogistico} can be characterized in terms of the first
eigenvalue of the operator~$L_\mu$.

In further detail, we show that if the resources (encoded by the function~$\sigma$, the parameter~$\tau$
and the convolution kernel~$J$) happens to be too small when compared to~$\lambda_\mu(\Omega)$ defined in~\eqref{primoautovalore}, then problem~\eqref{problemalogistico} admits only the solution~$u\equiv 0$.

If instead the resources turn out to be sufficiently large, then problem~\eqref{problemalogistico} has a positive solution. This scenario corresponds to the survival of the population.

Our result reads as follows:

\begin{theorem}\label{thmlogistica2}
Let~$\Omega$ be an open and bounded
subset of~$\R^N$ with Lipschitz boundary.

Then, there exists~$\gamma_0>0$, depending only on~$N$ and~$\Omega$,
such that if~$\gamma\in[0,\gamma_0]$ the following statements hold true.

Let~$\mu$ satisfy~\eqref{mu0}, \eqref{mu1}, \eqref{mu2} and~\eqref{mu2forte}. Let~$\sigma$, $ \nu$ and~$ J$ satisfy~\eqref{ipotesilogistica1} and~\eqref{proprietàJ}.
Let~$\lambda_\mu(\Omega)$ be given by~\eqref{primoautovalore}.

If
\begin{equation}\label{supsigma}
\sup_\Omega \sigma +\tau \leq \lambda_\mu(\Omega),
\end{equation}
then problem~\eqref{problemalogistico} admits only the trivial solution.

If instead~$e_\mu$ is as in Theorem~\ref{propabc} and
\begin{equation}\label{infsigma}
\lambda_\mu(\Omega)< \inf_\Omega \sigma +\tau \int_\Omega e_\mu (J\ast e_\mu)\, dx,
\end{equation}
then problem~\eqref{problemalogistico} admits a nonnegative and nontrivial 
solution.
\end{theorem}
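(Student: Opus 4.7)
The two implications are established by variational means but require rather different arguments.

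\emph{Nonexistence under~\eqref{supsigma}.} Let $u$ be a nonnegative weak solution of~\eqref{problemalogistico}. The plan is to test the equation against $u$ itself. The left-hand side produces the quadratic form
\[
\int_{[0,1]}[u]_s^2\,d\mu^+(s)-\int_{[0,\overline s)}[u]_s^2\,d\mu^-(s),
\]
which by~\eqref{primoautovalore} is at least $\lambda_\mu(\Omega)\|u\|_{L^2(\Omega)}^2$. On the right-hand side, $\nu,u\ge 0$ give $(\sigma-\nu u)u^2\le(\sup_\Omega\sigma)u^2-\nu u^3$, while Young's inequality for convolutions together with $\|J\|_{L^1(\R^N)}=1$ yields $\int_\Omega u(J\ast u)\,dx\le\|u\|_{L^2(\Omega)}^2$. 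Combining these bounds with~\eqref{supsigma} forces
\[
\lambda_\mu(\Omega)\|u\|_{L^2(\Omega)}^2\le(\sup_\Omega\sigma+\tau)\|u\|_{L^2(\Omega)}^2-\int_\Omega\nu u^3\,dx\le\lambda_\mu(\Omega)\|u\|_{L^2(\Omega)}^2-\int_\Omega\nu u^3\,dx,
\]
and hence $\nu u\equiv 0$ almost everywhere in $\Omega$. Moreover, the whole chain must be an equality, so that $u$ attains the minimum in~\eqref{primoautovalore} and is a first eigenfunction. Combining the simplicity and positivity of $e_\mu$ from Theorem~\ref{propabc} with the integrability hypothesis $(\sigma+\tau)^3/\nu^2\in L^1(\Omega)$ (which forces $\{\nu=0\}\subset\{\sigma+\tau=0\}$) then rules out any nonzero multiple $u=ce_\mu$, so $u\equiv 0$.

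\emph{Existence under~\eqref{infsigma}.} The strategy is to minimize the energy functional
\[
I(u):=\tfrac{1}{2}\bigg[\int_{[0,1]}[u]_s^2\,d\mu^+(s)-\int_{[0,\overline s)}[u]_s^2\,d\mu^-(s)\bigg]-\int_\Omega\!\bigg(\tfrac{\sigma}{2}u_+^2-\tfrac{\nu}{3}u_+^3\bigg)\,dx-\tfrac{\tau}{2}\int_\Omega u(J\ast u)\,dx
\]
on the Hilbert space $X(\Omega)$. For $\gamma\in[0,\gamma_0]$ small, the bracketed quadratic form is equivalent to the square of the $X(\Omega)$-norm. The Young-type estimate
\[
(\sigma+\tau)\,u^2\le C_\delta\,\frac{(\sigma+\tau)^3}{\nu^2}+\delta\,\nu|u|^3,
\]
applied with $\delta$ sufficiently small and combined with~\eqref{ipotesilogistica1}, absorbs the quadratic terms into the positive cubic contribution and delivers coercivity of $I$. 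Weak lower semicontinuity is standard: Fatou's lemma for the quadratic part and the compact embedding $X(\Omega)\hookrightarrow L^p(\Omega)$ for $p<2^*_{s_\sharp}$ (Proposition~\ref{embeddingsXOmega}) for the lower-order terms. The direct method then produces a minimizer $u_0\in X(\Omega)$.

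To certify that $u_0\not\equiv 0$, I would evaluate $I$ along $\epsilon\,e_\mu$ (normalizing $\|e_\mu\|_{L^2}=1$). Using that $e_\mu$ realizes~\eqref{primoautovalore} and the pointwise bound $\int_\Omega\sigma e_\mu^2\ge\inf_\Omega\sigma$, the $\epsilon^2$-coefficient in $I(\epsilon e_\mu)$ is at most
\[
\tfrac{1}{2}\Bigl(\lambda_\mu(\Omega)-\inf_\Omega\sigma-\tau\int_\Omega e_\mu(J\ast e_\mu)\,dx\Bigr),
\]
which is strictly negative by~\eqref{infsigma}. Since the cubic correction contributes $O(\epsilon^3)$, for $\epsilon>0$ small one has $I(\epsilon e_\mu)<0=I(0)$, hence $u_0\not\equiv 0$.

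The main obstacle is showing that $u_0\ge 0$, so that the Euler--Lagrange equation for $I$ actually coincides with~\eqref{problemalogistico}. Because of the possibly negative component $\mu^-$ of $\mu$, the usual truncation argument (replacing $u$ by $|u|$ to decrease the Dirichlet form) is not directly available. A workable approach is to test the Euler--Lagrange equation satisfied by $u_0$ against $(u_0)_-$ and exploit the pointwise identity $(u_0)_+(u_0)_-\equiv 0$: the Gagliardo cross terms produce a positive $\mu^+$-contribution that dominates the corresponding $\mu^-$-contribution precisely thanks to the smallness of $\gamma$ and to the quantitative hypothesis~\eqref{mu2forte}, forcing $(u_0)_-\equiv 0$ and completing the argument.
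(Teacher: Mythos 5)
Your nonexistence argument under~\eqref{supsigma} is essentially the paper's: testing against $u$, using Lemma~\ref{lemmaJ} and~\eqref{primoautovalore} forces $\int_\Omega\nu u^3\,dx=0$, and the hypothesis $(\sigma+\tau)^3/\nu^2\in L^1(\Omega)$ then makes $\sigma$, $\nu$ and $\tau$ vanish where $u$ lives, so that the (reabsorbed) Dirichlet form of $u$ vanishes and $u\equiv0$; this part is fine. Your device of evaluating the energy at $\varepsilon e_\mu$ for small $\varepsilon>0$ to show that the infimum is negative is also correct.

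The existence half has a genuine gap. You truncate $\tfrac{\sigma}{2}u^2$ and $\tfrac{\nu}{3}|u|^3$ to their $u_+$ versions but leave the pollination term $\tfrac{\tau}{2}\int_\Omega u(J\ast u)\,dx$ untruncated. On $\{u<0\}$ your functional then has no cubic term to absorb the quadratic contribution of size $\tau\|u_-\|_{L^2(\Omega)}^2$, and the Young inequality you quote involves $\nu|u|^3$, which your functional does not contain. Concretely, with $\|e_\mu\|_{L^2(\Omega)}=1$ one computes $I(-te_\mu)=\tfrac{t^2}{2}\bigl(\lambda_\mu(\Omega)-\tau\int_\Omega e_\mu(J\ast e_\mu)\,dx\bigr)$ for $t>0$, and~\eqref{infsigma} with $\inf_\Omega\sigma=0$ forces the bracket to be negative, so $I$ is unbounded below and the direct method yields no minimizer. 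The paper's functional $E$ in~\eqref{defE} keeps $\int_\Omega\nu|u|^3/3$ precisely so that the cubic term absorbs $(\sigma+\tau)u^2/2$ everywhere. Relatedly, the $|u|$-replacement you declare unavailable is exactly what~\eqref{mu2forte} provides: Lemma~\ref{lemmamodulo} gives $\int_{[0,1]}[|u|]_s^2\,d\mu(s)\le\int_{[0,1]}[u]_s^2\,d\mu(s)$, and since $J\ge0$ one also has $\int_\Omega|u|(J\ast|u|)\,dx\ge\int_\Omega u(J\ast u)\,dx$, whence $E(|u|)\le E(u)$ and the minimizer may be taken nonnegative. Your substitute --- testing the Euler--Lagrange equation against $(u_0)_-$ --- leaves the term $\tau\int_\Omega(J\ast(u_0)_-)(u_0)_-\,dx\le\tau\|(u_0)_-\|_{L^2(\Omega)}^2$ on the wrong side, which cannot be dominated by $c\|(u_0)_-\|_X^2$ for large $\tau$, so as sketched it does not close either.
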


In light of the chemotaxis processes previously described, the next result is quite interesting. Indeed, we show that, for some~$\sigma$, $\nu$ and~$\tau$, 
if there are no contributions  coming from the negative component of the signed measure~$\mu$, the whole population gets extinct, while
the presence of negative contributions produces
a nontrivial solution (even if these negative contributions are arbitrarily small).

The precise statement of our result is the following:

\begin{theorem}\label{thmlogistica3}
Let~$\Omega$ be an open and bounded subset
of~$\R^N$ with Lipschitz boundary.

Then, there exists~$\gamma_0>0$, depending only on~$N$ and~$\Omega$,
such that if~$\gamma\in[0,\gamma_0]$ the following statement holds true.

Let~$\mu$ satisfy~\eqref{mu0}, \eqref{mu1}, \eqref{mu2} and~\eqref{mu2forte} and suppose that~$\mu^-\not \equiv 0$. Let~$J$ satisfy~\eqref{proprietàJ}.

Then, there exist~$\tau\in[ 0,+\infty)$ and~$\sigma$, $\nu\in L^\infty(\Omega)$ satisfying~\eqref{ipotesilogistica1} such that the problem
\begin{equation}\label{problemchemotaxis}
\begin{cases}
L_{\mu^+} u=(\sigma-\nu u)u+\tau (J*u)   &\mbox{in } \Omega,
\\
u=0 	&\mbox{in } \R^N \setminus \Omega,
\\
u\geq 0 &\mbox{in } \R^N
\end{cases}
\end{equation}
admits only the trivial solution, while for any~$\varepsilon\in (0,1)$, the problem 
\begin{equation}\label{problemchemotaxis2}
\begin{cases}
L_{\mu^+} u-\varepsilon L_{\mu^-} u=(\sigma-\nu u)u+\tau (J*u)   &\mbox{in } \Omega,
\\
u=0 	&\mbox{in } \R^N \setminus \Omega,
\\
u\geq 0 &\mbox{in } \R^N
\end{cases}
\end{equation}
 admits a nontrivial solution.
\end{theorem}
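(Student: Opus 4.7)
The plan is to leverage the existence/extinction dichotomy provided by Theorem~\ref{thmlogistica2}: it suffices to choose $\sigma,\nu,\tau$ (independent of~$\varepsilon$) such that $\sup_\Omega \sigma + \tau \le \lambda_{\mu^+}(\Omega)$ (forcing extinction in~\eqref{problemchemotaxis}) while at the same time $\lambda_{\mu_\varepsilon}(\Omega) < \inf_\Omega \sigma + \tau\int_\Omega e_{\mu_\varepsilon}(J\ast e_{\mu_\varepsilon})\,dx$ for every $\varepsilon\in(0,1)$ (forcing survival in~\eqref{problemchemotaxis2}), where $\mu_\varepsilon := \mu^+ - \varepsilon\mu^-$. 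Both conditions can be arranged simultaneously provided one has a \emph{strict} spectral monotonicity $\lambda_{\mu_\varepsilon}(\Omega) < \lambda_{\mu^+}(\Omega)$, and this is really the heart of the argument.

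To establish the monotonicity, I would apply Theorem~\ref{propabc} to the measure $\mu^+$ alone (all the assumptions~\eqref{mu0}--\eqref{mu2forte} are trivial for $\mu^+$, with $\gamma=0$), obtaining a nonnegative, nontrivial eigenfunction $e_{\mu^+}\in X(\Omega)$ realizing $\lambda_{\mu^+}(\Omega)$ via the Rayleigh quotient~\eqref{primoautovalore}. Testing the quotient for $\mu_\varepsilon$ (noting $\mu_\varepsilon^+=\mu^+$ and $\mu_\varepsilon^-=\varepsilon\mu^-$) against $e_{\mu^+}$ yields
\begin{equation*}
\lambda_{\mu_\varepsilon}(\Omega)\;\le\;\lambda_{\mu^+}(\Omega)\;-\;\varepsilon\,\frac{\displaystyle\int_{[0,\overline s)} [e_{\mu^+}]_s^{2}\,d\mu^-(s)}{\displaystyle\int_\Omega e_{\mu^+}^{2}\,dx}.
\end{equation*}
Since $e_{\mu^+}\not\equiv 0$ vanishes outside~$\Omega$, the seminorm $[e_{\mu^+}]_s$ is strictly positive for every $s\in[0,1]$ (recalling the conventions $[\,\cdot\,]_0=\|\cdot\|_{L^2(\Omega)}$ and $[\,\cdot\,]_1=\|\nabla\cdot\|_{L^2(\Omega)}$); combined with $\mu^-\not\equiv 0$ (and $\mu^-$ supported in $[0,\overline s)$ by~\eqref{mu1}), the correction term is strictly positive and strict inequality follows.

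With the monotonicity in hand, I would choose the constants
\[
\sigma(x):=\lambda_{\mu^+}(\Omega),\qquad \nu(x):=1,\qquad \tau:=0,
\]
which clearly lie in $L^\infty(\Omega)$ and satisfy~\eqref{ipotesilogistica1}. For~\eqref{problemchemotaxis}, the extinction part of Theorem~\ref{thmlogistica2} applied to $\mu^+$ gives only the trivial solution, because $\sup_\Omega\sigma+\tau=\lambda_{\mu^+}(\Omega)$. For~\eqref{problemchemotaxis2}, I would first check that $\mu_\varepsilon$ inherits~\eqref{mu0}--\eqref{mu2forte} with constants scaled by~$\varepsilon\le 1$, hence no worse than those of the ambient $\mu$; the existence part of Theorem~\ref{thmlogistica2} then produces a nontrivial nonnegative solution, since the strict monotonicity gives $\lambda_{\mu_\varepsilon}(\Omega)<\lambda_{\mu^+}(\Omega)=\inf_\Omega\sigma$ (the $\tau$-term vanishes). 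The only mildly delicate point is the strict positivity of the Gagliardo seminorms $[e_{\mu^+}]_s$ on the support of $\mu^-$, but this is immediate from the nontriviality of $e_{\mu^+}$ and the compact-support structure of $X(\Omega)$, so no substantial obstruction arises.
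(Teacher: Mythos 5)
Your proposal is correct and follows essentially the same route as the paper: one tests the Rayleigh quotient for $\mu_\varepsilon:=\mu^+-\varepsilon\mu^-$ with the eigenfunction $e_{\mu^+}$ to obtain the strict inequality $\lambda_{\mu_\varepsilon}(\Omega)<\lambda_{\mu^+}(\Omega)$ (strict because $\mu^-\not\equiv0$ and $[e_{\mu^+}]_s>0$), and then invokes the extinction/survival dichotomy of Theorem~\ref{thmlogistica2}. Your explicit choice $\sigma\equiv\lambda_{\mu^+}(\Omega)$, $\nu\equiv1$, $\tau=0$ is in fact a welcome clarification of the paper's "we can take $\sigma,\nu,\tau$ such that $\lambda_{\mu_\varepsilon}(\Omega)<\inf_\Omega\sigma\le\sup_\Omega\sigma+\tau\le\lambda_{\mu^+}(\Omega)$", since a single $\sigma$ working for every $\varepsilon\in(0,1)$ is essentially forced to be this constant (as $\lambda_{\mu_\varepsilon}(\Omega)\to\lambda_{\mu^+}(\Omega)$ when $\varepsilon\searrow0$).
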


We now consider the scenario where there are two disjoint sets~$\Omega_1$ and~$\Omega_2$ such that the resource in each single set is not sufficient for the species to survive. In spite of this, we show that the combined resources in the union of the sets can be conveniently used by a nonlocal population to survive.
We point out that this result was already known in the case of diffusion driven by the fractional Laplacian
(see~\cite[Theorem~1.3]{MR3579567}).
The interesting feature here is that this phenomenon persists in the presence of a general operator~$L_\mu$, both in the cases~$\mu^-\equiv 0$ and~$\mu^-\not\equiv 0$.

This scenario is described by the following result:

\begin{theorem}\label{thmlogistica4}
Let~$\Omega$ be an open and bounded subset of~$\R^N$ with Lipschitz
boundary, and suppose that~$\Omega=\Omega_1\cup\Omega_2$, where~$\Omega_1$ and~$\Omega_2$ are two congruent sets such that~$\overline{\Omega_1}\cap\overline{\Omega_2}=\varnothing$. Let~$R>0$ be such that~$\Omega \subset B_R$.

Then, there exists~$\gamma_0>0$, depending only on~$N$ and~$\Omega$,
such that if~$\gamma\in[0,\gamma_0]$ the following statement holds true.

Let~$\mu$ satisfy~\eqref{mu0}, \eqref{mu1}, \eqref{mu2} and~\eqref{mu2forte}
and assume that~$\mu^+\big((0,1)\big)>0$.
Let~$J$ satisfy~\eqref{proprietàJ}.

Then, there exist~$\tau\ge 0$ and~$\sigma$, $\nu\in L^\infty(\Omega)$ satisfying~\eqref{ipotesilogistica1} such that, for any~$i\in\{1,2\}$, the problem
\begin{equation}\label{problemai}
\begin{cases}
L_\mu u=(\sigma-\nu u)u+\tau (J*u)   &\mbox{in } \Omega_i,\\
u=0 	&\mbox{in } \R^N \setminus \Omega_i,\\
u\geq 0 &\mbox{in } \R^N.
\end{cases}
\end{equation}
admits only the trivial solution, while problem~\eqref{problemalogistico} admits a nontrivial solution.
\end{theorem}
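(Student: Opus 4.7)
The overall plan is to reduce the statement to the strict comparison
\begin{equation*}
\lambda_\mu(\Omega)\;<\;\lambda_\mu(\Omega_1)\;=\;\lambda_\mu(\Omega_2)
\end{equation*}
between principal eigenvalues and then to invoke Theorem~\ref{thmlogistica2}. The equality on the right is immediate from the isometric invariance of each~$(-\Delta)^s$ combined with the congruence of~$\Omega_1$ and~$\Omega_2$. Granted the strict inequality on the left, I would pick any constant~$c\in(\lambda_\mu(\Omega),\lambda_\mu(\Omega_1))$ and set $\sigma\equiv c$, $\nu\equiv 1$, $\tau:=0$: then~\eqref{ipotesilogistica1} and~\eqref{proprietàJ} are trivially satisfied, \eqref{supsigma} holds on each~$\Omega_i$ so Theorem~\ref{thmlogistica2} forces~\eqref{problemai} to admit only the trivial solution, and~\eqref{infsigma} holds on~$\Omega$ so the same theorem produces a nonnegative nontrivial solution of~\eqref{problemalogistico}.

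For the eigenvalue comparison I would use the variational principle~\eqref{primoautovalore} with the following test function. Let~$e\in X(\Omega_1)$ be a nonnegative first eigenfunction on~$\Omega_1$, as provided by Theorem~\ref{propabc}, and let~$e'\in X(\Omega_2)$ be its congruent image, so that $[e']_s^2=[e]_s^2$ for every~$s\in[0,1]$. Then $u:=e+e'$ lies in~$X(\Omega)$. Since $\overline{\Omega_1}\cap\overline{\Omega_2}=\varnothing$, the supports of~$e$ and~$e'$ are disjoint, whence $[u]_0^2=2[e]_0^2$, $[u]_1^2=2[e]_1^2$, and for each~$s\in(0,1)$ a direct expansion of the Gagliardo seminorm gives
\begin{equation*}
[u]_s^2\;=\;2[e]_s^2\;-\;C_s,\qquad C_s\;:=\;2c_{N,s}\int_{\Omega_1\times\Omega_2}\frac{e(x)\,e'(y)}{|x-y|^{N+2s}}\,dx\,dy\;>\;0.
\end{equation*}
Plugging this into~\eqref{primoautovalore} yields
\begin{equation*}
\lambda_\mu(\Omega)\;\le\;\lambda_\mu(\Omega_1)\;-\;\frac{1}{2[e]_0^2}\left(\int_{[0,1]}C_s\,d\mu^+(s)\;-\;\int_{[0,\overline{s})}C_s\,d\mu^-(s)\right),
\end{equation*}
and the task reduces to showing that the parenthesis is strictly positive.

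This last step is the main obstacle, because of the potentially counteracting contribution of~$\mu^-$. The map $s\mapsto C_s$ is continuous on~$[0,1]$ (by dominated convergence, using $|x-y|\ge\mathrm{dist}(\overline{\Omega_1},\overline{\Omega_2})>0$ on~$\Omega_1\times\Omega_2$), vanishes at the endpoints and is strictly positive on~$(0,1)$. Hence I may introduce constants $0<c_0\le C_{\max}$ depending only on~$N,\Omega_1,\Omega_2,\overline{s},\delta$ with $C_s\le C_{\max}$ on~$[0,\overline{s})$ and $C_s\ge c_0$ on~$[\overline{s},1-\delta]$. If $\mu^+([\overline{s},1-\delta])=0$, then~\eqref{mu2forte} forces $\mu^-((0,\overline{s}))=0$, so the negative integral vanishes while the positive one is strictly positive because $\mu^+((0,1))>0$ and $C_s>0$ on~$(0,1)$. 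Otherwise, invoking~\eqref{mu2forte} and the smallness of~$\overline{\gamma}$ built into the standing setup of the paper, I would estimate
\begin{equation*}
\int_{[0,\overline{s})}C_s\,d\mu^-(s)\;\le\;C_{\max}\,\overline{\gamma}\,\delta\,\mu^+([\overline{s},1-\delta])\;\le\;\frac{C_{\max}\,\overline{\gamma}\,\delta}{c_0}\int_{[\overline{s},1-\delta]}C_s\,d\mu^+(s),
\end{equation*}
so the prefactor stays strictly below~$1$. Either way, the parenthesis is strictly positive, yielding $\lambda_\mu(\Omega)<\lambda_\mu(\Omega_1)$ and closing the argument. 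It is precisely the nonlocal reabsorption encoded in~\eqref{mu2forte}, and not the weaker~\eqref{mu2}, that drives the argument: if the mass of~$\mu^+$ on~$[\overline{s},1]$ were concentrated at~$s=1$, then $C_1=0$ would make the Rayleigh quotient of~$u$ collapse back to~$\lambda_\mu(\Omega_1)$.
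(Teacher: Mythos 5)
Your overall architecture coincides with the paper's: reduce to the strict eigenvalue comparison $\lambda_\mu(\Omega)<\lambda_\mu(\Omega_1)=\lambda_\mu(\Omega_2)$ via the test function $e+e'$ with disjoint supports, then choose constant data sandwiched between the two eigenvalues and apply Theorem~\ref{thmlogistica2}. That part, including the endpoint bookkeeping at $s=0,1$ and the case distinction when $\mu^+([\overline s,1-\delta])=0$, is sound (the factor $2$ versus $4$ in your $C_s$ is immaterial).

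The gap is in the step you yourself identify as the main obstacle: the assertion that $C_{\max}\,\overline\gamma\,\delta/c_0<1$. You obtain $C_{\max}$ and $c_0$ merely from continuity of $s\mapsto C_s$ and its strict positivity on $(0,1)$, which gives no control whatsoever on the ratio $C_{\max}/c_0$; meanwhile $\overline\gamma$ is fixed once and for all in~\eqref{overlinegamma} in terms of $N$, $R$ and $\overline s$ only. Concretely, if you bound $C_s$ above on $[0,\overline s)$ by maximizing $|x-y|^{-N-2s}$ over $\Omega_1\times\Omega_2$ and below on $[\overline s,1-\delta]$ by minimizing it, the ratio picks up a factor of order $\bigl(2R/\operatorname{dist}(\overline{\Omega_1},\overline{\Omega_2})\bigr)^{N}$, which is unbounded over admissible geometries and is not compensated by the fixed $\overline\gamma$. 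The inequality you need is true, but it requires comparing the kernels \emph{at the same point} $(x,y)$ across different orders: for $s<\overline s\le\sigma$ one has $|x-y|^{-2s}\le \max\{(2R)^{2\overline s},(2R)^2\}\min\{1,(2R)^{2\overline s}\}^{-1}\,|x-y|^{-2\sigma}$ whenever $|x-y|<2R$, so that the common singular factor $|x-y|^{-N}$ cancels. This is exactly the content of Lemmas~\ref{lemmacns} and~\ref{c1mumeno.le}, packaged in the paper as the pointwise inequality~\eqref{crucial.eq.di}; the paper simply integrates~\eqref{crucial.eq.di} against $e_1(x)e_2(y)\ge 0$ to conclude. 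Your argument becomes correct if you either (i) invoke~\eqref{crucial.eq.di} directly before integrating in $(x,y)$, or (ii) define $C_{\max}:=\sup_{[0,\overline s)}C_s$ and $c_0:=\inf_{[\overline s,1-\delta]}C_s$ and prove $C_{\max}\le \overline c_N\max\{(2R)^{2\overline s},(2R)^2\}\bigl(\underline c_N\,\delta\min\{1,(2R)^{2\overline s}\}\bigr)^{-1}c_0$ via the same pointwise comparison; as written, the prefactor bound is unjustified.
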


In light of Theorem~\ref{thmlogistica2},
we have that the survival of the population is influenced by the
eigenvalue of the operator~$L_\mu$ in a set~$\Omega$,
and therefore by the ``size" of~$\Omega$.

In order to investigate further this topic, for any~$r>0$ we define
\[
\Omega_r:=r\Omega=\big\{ rx \;{\mbox{ with }}\; x\in\Omega\big\}.
\]
We present the following two results:

\begin{proposition}\label{proplogistica5}
Let~$\Omega$ be an open and bounded subset of~$\R^N$ with Lipschitz boundary.

Then, there exists~$\gamma_0>0$, depending only on~$N$ and~$\Omega$,
such that if~$\gamma\in[0,\gamma_0]$ the following statement holds true.

Let~$\mu$ satisfy~\eqref{mu0}, \eqref{mu1}, \eqref{mu2} and~\eqref{mu2forte}.
Let~$\tau\in[0,+\infty)$ and let~$J$ satisfy~\eqref{proprietàJ}.

Let~$\Sigma := \mbox{supp}(\mu^+)$ and assume that
\begin{equation}\label{ipotesilambdamupiuPRE} 
\lambda_{\mu^+} (\Omega)\le \inf_{s\in\Sigma} r^{2s}.\end{equation}
Also, assume that 
\begin{equation}\label{ipotesilambdamupiu}
{\mbox{either~$\mu^-\not\equiv0$ or}} \quad
\lambda_{\mu^+} (\Omega)<\inf_{s\in\Sigma} r^{2s}.
\end{equation}

Then,
\[
\begin{cases}
L_\mu u = (1-u)u + \tau(J\ast u) &\mbox{ in } \Omega_r,\\
u=0 &\mbox{ in } \R^N\setminus\Omega_r,\\
u\ge 0 &\mbox{ in } \R^N
\end{cases}
\]
admits a nontrivial solution.
\end{proposition}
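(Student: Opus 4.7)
The strategy is to apply Theorem~\ref{thmlogistica2} on the dilated domain~$\Omega_r$ with the choice~$\sigma\equiv\nu\equiv 1$, which trivially satisfies~\eqref{ipotesilogistica1} since~$\Omega_r$ is bounded. Under this choice the right-hand side of~\eqref{infsigma} equals~$1+\tau\int_{\Omega_r} e_\mu\,(J\ast e_\mu)\,dx\ge 1$ (recalling that~$\tau\ge0$, $J\ge0$ and the eigenfunction~$e_\mu$ provided by Theorem~\ref{propabc} does not change sign). Consequently, the whole problem reduces to proving the strict bound~$\lambda_\mu(\Omega_r)<1$.

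The first step is a scaling computation. For any~$u\in X(\Omega)$ the rescaled function~$v(x):=u(x/r)$ lies in~$X(\Omega_r)$, and a change of variables yields~$[v]_s^2=r^{N-2s}\,[u]_s^2$ for every~$s\in[0,1]$ (consistent with the convention~$[u]_0=\|u\|_{L^2}$ and~$[u]_1=\|\nabla u\|_{L^2}$), together with~$\|v\|_{L^2(\Omega_r)}^2=r^N\,\|u\|_{L^2(\Omega)}^2$. Substituting into the Rayleigh quotient associated with~$\mu^+$ (to which Theorem~\ref{propabc} applies, since~$\mu^+$ trivially meets~\eqref{mu0}--\eqref{mu2forte} with~$\mu^-\equiv0$), and testing against the eigenfunction~$e_{\mu^+}$ on~$\Omega$, one gets
\[
\lambda_{\mu^+}(\Omega_r)\;\le\;
\frac{\displaystyle\int_{[0,1]} r^{-2s}\,[e_{\mu^+}]_s^2\,d\mu^+(s)}{\|e_{\mu^+}\|_{L^2(\Omega)}^2}
\;\le\;\frac{\lambda_{\mu^+}(\Omega)}{\inf_{s\in\Sigma} r^{2s}},
\]
where the last inequality uses that~$\mu^+$ is supported in~$\Sigma$ (so~$r^{-2s}$ is bounded by~$(\inf_{s\in\Sigma} r^{2s})^{-1}$ on the support). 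Hypothesis~\eqref{ipotesilambdamupiuPRE} then upgrades this to~$\lambda_{\mu^+}(\Omega_r)\le 1$.

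The second step turns this weak inequality into a strict one. Let~$w$ be a minimizer of~$\lambda_{\mu^+}(\Omega_r)$, granted by Theorem~\ref{propabc}. Testing the variational principle~\eqref{primoautovalore} for~$\lambda_\mu(\Omega_r)$ against~$w$ gives
\[
\lambda_\mu(\Omega_r)\;\le\;\lambda_{\mu^+}(\Omega_r)-\frac{\displaystyle\int_{[0,\overline s)}[w]_s^2\,d\mu^-(s)}{\|w\|_{L^2(\Omega_r)}^2}.
\]
Because~$w\in X(\Omega_r)\setminus\{0\}$ vanishes outside the bounded set~$\Omega_r$, it is nonconstant on~$\R^N$, hence~$[w]_s>0$ for every~$s\in[0,1]$; in particular the subtracted integral is strictly positive as soon as~$\mu^-\not\equiv0$. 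The dichotomy~\eqref{ipotesilambdamupiu} now covers both cases: if~$\mu^-\not\equiv0$ then~$\lambda_\mu(\Omega_r)<\lambda_{\mu^+}(\Omega_r)\le 1$, while otherwise the strict bound built into~\eqref{ipotesilambdamupiu} yields~$\lambda_\mu(\Omega_r)\le\lambda_{\mu^+}(\Omega_r)<1$ directly from the scaling estimate. Either way,~$\lambda_\mu(\Omega_r)<1\le \inf_{\Omega_r}\sigma+\tau\int_{\Omega_r} e_\mu(J\ast e_\mu)\,dx$, and Theorem~\ref{thmlogistica2} concludes the proof.

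The only genuine subtlety is securing the \emph{strict} inequality required in~\eqref{infsigma}: the scaling argument alone only produces the non-strict bound~$\lambda_\mu(\Omega_r)\le 1$, and~\eqref{ipotesilambdamupiu} is exactly the hypothesis tailored to lift this to a strict inequality, either by absorbing the extra room via a nontrivial negative component of~$\mu$ or by having it built into the spectral assumption from the start.
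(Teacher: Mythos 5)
Your proposal is correct and follows essentially the same route as the paper: reduce to the strict bound~$\lambda_\mu(\Omega_r)<1$ with~$\sigma:=1$, obtain~$\lambda_{\mu^+}(\Omega_r)\le\lambda_{\mu^+}(\Omega)/\inf_{s\in\Sigma}r^{2s}\le1$ by scaling (the paper invokes Proposition~\ref{scalingprimoautovalore}, which you re-derive), and upgrade to a strict inequality via the dichotomy~\eqref{ipotesilambdamupiu}, using the strict gap~$\lambda_\mu(\Omega_r)<\lambda_{\mu^+}(\Omega_r)$ when~$\mu^-\not\equiv0$ exactly as in~\eqref{t0430tyohfjfe21456vfds}. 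No gaps.
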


\begin{theorem}\label{thmduemisure}
Let~$\Omega$ be an open and bounded subset of~$\R^N$ with Lipschitz boundary. 

Then, there exists~$\gamma_0>0$, depending only on~$N$ and~$\Omega$,
such that if~$\gamma\in[0,\gamma_0]$ the following statement holds true.

Let~$\mu_1$ and~$\mu_2$ be two positive measures satisfying~\eqref{mu0},
\eqref{mu1} and~\eqref{mu2}, and such that 
\[
\sup\{\mbox{supp}(\mu_1)\}<\inf\{\mbox{supp}(\mu_2)\}. 
\]
Let~$s_\sharp(\mu_1)$ and~$s_\sharp(\mu_2)$ as in~\eqref{scritico} respectively\footnote{We stress that, according to~\eqref{scritico}, $s_\sharp(\mu_1)$ and~$s_\sharp(\mu_2)$ are such that~$\mu_1([s_\sharp(\mu_1),1])>0$ and~$\mu_2([s_\sharp(\mu_2),1])>0$.} for~$\mu_1$ and~$\mu_2$. Let~$J$ satisfy~\eqref{proprietàJ}.

Then,
there exists~$\underline{r}>0$ such that, for any~$r\in (0,\underline{r})$, there exist~$\tau_r \ge 0$ and~$\sigma_r$, $\nu_r\in L^\infty(\Omega)$ satisfying~\eqref{ipotesilogistica1} such that the problem
\begin{equation}\label{problemaduemisure1}
\begin{cases}
L_{\mu_1} u = (\sigma_r-\nu_r u)u + \tau_r(J\ast u) &\mbox{ in } \Omega_r,\\
u=0 &\mbox{ in } \R^N\setminus\Omega_r,\\
u\ge 0 &\mbox{ in } \R^N
\end{cases}
\end{equation}
admits a nontrivial solution, while the problem
\begin{equation}\label{problemaduemisure2}
\begin{cases}
L_{\mu_2} u = (\sigma_r-\nu_r u)u + \tau_r(J\ast u) &\mbox{ in } \Omega_r,\\
u=0 &\mbox{ in } \R^N\setminus\Omega_r,\\
u\ge 0 &\mbox{ in } \R^N
\end{cases}
\end{equation}
admits only the trivial solution.

Moreover, there exists~$\overline{r}\geq \underline{r}$ such that, for any~$r\in (\overline{r},+\infty)$, there exist~$\tau_r \ge 0$ and~$\sigma_r$, $\nu_r\in L^\infty(\Omega)$ satisfying~\eqref{ipotesilogistica1} such that problem~\eqref{problemaduemisure1} admits only the trivial 
solution, while problem~\eqref{problemaduemisure2} admits a nontrivial solution.
\end{theorem}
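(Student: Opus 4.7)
The plan is to reduce everything to a scaling analysis of the principal eigenvalues~$\lambda_{\mu_i}(\Omega_r)$, and then to apply Theorem~\ref{thmlogistica2}. Write~$a:=\sup\{\mbox{supp}(\mu_1)\}$ and~$b:=\inf\{\mbox{supp}(\mu_2)\}$, so that~$0<a<b\leq 1$ by hypothesis (positivity of~$a$ follows from~\eqref{mu0} applied to~$\mu_1$). Since~$\mu_1$ and~$\mu_2$ are positive measures, their negative parts vanish, so conditions~\eqref{mu1}, \eqref{mu2}, and~\eqref{mu2forte} are satisfied trivially with~$\gamma=0$, and Theorem~\ref{thmlogistica2} is available on every~$\Omega_r$ for both measures.

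The heart of the argument is to open a gap between~$\lambda_{\mu_1}(\Omega_r)$ and~$\lambda_{\mu_2}(\Omega_r)$ whose sign reverses as~$r$ crosses the scale~$1$. For~$v\in X(\Omega_r)$, the change of variables~$u(x):=v(rx)$ yields~$u\in X(\Omega)$ with~$[v]_s^2=r^{N-2s}[u]_s^2$ and~$\|v\|_{L^2(\Omega_r)}^2=r^N\|u\|_{L^2(\Omega)}^2$, so~\eqref{primoautovalore} becomes
\[
\lambda_{\mu_i}(\Omega_r)=\min_{u\in X(\Omega)\setminus\{0\}}\frac{\displaystyle\int_{[0,1]}r^{-2s}[u]_s^2\,d\mu_i(s)}{\|u\|_{L^2(\Omega)}^2}.
\]
For~$r\leq 1$, testing against the eigenfunction~$e_{\mu_1}$ and exploiting~$r^{-2s}\leq r^{-2a}$ on~$\mbox{supp}(\mu_1)$ gives~$\lambda_{\mu_1}(\Omega_r)\leq r^{-2a}\lambda_{\mu_1}(\Omega)$; on the other hand,~$r^{-2s}\geq r^{-2b}$ on~$\mbox{supp}(\mu_2)$ directly produces~$\lambda_{\mu_2}(\Omega_r)\geq r^{-2b}\lambda_{\mu_2}(\Omega)$. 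Since~$b>a$, the ratio~$r^{-2(b-a)}$ diverges as~$r\to 0^+$, so some~$\underline{r}\in(0,1]$ satisfies~$r^{-2a}\lambda_{\mu_1}(\Omega)<r^{-2b}\lambda_{\mu_2}(\Omega)$ for every~$r\in(0,\underline{r})$. The same strategy for~$r\geq 1$ reverses the monotonicity of~$s\mapsto r^{-2s}$ and gives~$\lambda_{\mu_1}(\Omega_r)\geq r^{-2a}\lambda_{\mu_1}(\Omega)$, $\lambda_{\mu_2}(\Omega_r)\leq r^{-2b}\lambda_{\mu_2}(\Omega)$, producing some~$\overline{r}\geq\underline{r}$ such that~$r^{-2b}\lambda_{\mu_2}(\Omega)<r^{-2a}\lambda_{\mu_1}(\Omega)$ for every~$r>\overline{r}$.

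With the gap located, for~$r\in(0,\underline{r})$ choose any constant~$c_r\in\bigl(r^{-2a}\lambda_{\mu_1}(\Omega),\,r^{-2b}\lambda_{\mu_2}(\Omega)\bigr]$ and set~$\sigma_r:=c_r$, $\nu_r:=1$, $\tau_r:=0$, so that~\eqref{ipotesilogistica1} holds trivially. Then~$\lambda_{\mu_1}(\Omega_r)\leq r^{-2a}\lambda_{\mu_1}(\Omega)<c_r=\inf_{\Omega_r}\sigma_r$ activates~\eqref{infsigma} (with~$\tau_r=0$) and delivers a nontrivial solution of~\eqref{problemaduemisure1} via Theorem~\ref{thmlogistica2}, while~$\sup_{\Omega_r}\sigma_r+\tau_r=c_r\leq r^{-2b}\lambda_{\mu_2}(\Omega)\leq\lambda_{\mu_2}(\Omega_r)$ triggers~\eqref{supsigma} and forces only the trivial solution for~\eqref{problemaduemisure2}. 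The same construction, applied for~$r>\overline{r}$ with~$c_r$ in the reversed interval~$\bigl(r^{-2b}\lambda_{\mu_2}(\Omega),\,r^{-2a}\lambda_{\mu_1}(\Omega)\bigr]$, swaps the roles of the two measures. The main obstacle is essentially the bookkeeping of the scaling identity combined with the quantitative use of the support separation~$a<b$; once these bounds are recorded, the spectral dichotomy of Theorem~\ref{thmlogistica2} does the rest.
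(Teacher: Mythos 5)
Your proof is correct and follows essentially the same route as the paper: both hinge on the scaling behaviour of $\lambda_{\mu_i}(\Omega_r)$ (Proposition~\ref{scalingprimoautovalore}), use the separation of the supports to reverse the order of the two eigenvalues as $r$ crosses an explicit threshold, and then invoke Theorem~\ref{thmlogistica2} with constant data chosen in the resulting spectral gap. The only (inessential) difference is that the paper additionally sandwiches $\Omega$ between two balls via domain monotonicity of the eigenvalue before applying the scaling bounds, whereas you work with $\lambda_{\mu_i}(\Omega)$ directly; this is a harmless simplification.
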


Theorem~\ref{thmduemisure} says that
in a ``small" domain~$\Omega$
the ``strongly nonlocal'' diffusive species, corresponding to a measure~$\mu_1$ supported in a set consisting of small values of~$s$, may be favored
with respect to a diffusive species related to a measure~$\mu_2$ supported
in a set consisting of larger values of~$s$.
In ``large" domains, instead, a ``mildly nonlocal'' or local diffusion may favor
survival while  a``strongly nonlocal'' diffusion may lead to extinction.

\subsection{Organization of the paper}
The paper is organized as follows. In Section~\ref{sectionpreliminaries} we present the functional framework needed in our setting and we provide some preliminary results.

Section~\ref{sectioneigenvalues} contains the study of the first eigenvalue of problem~\eqref{eigenvalueproblem}.

Section~\ref{sectionlogistica} is devoted to the study of the logistic diffusion equation~\eqref{problemalogistico} and contains the proofs of Theorems~\ref{thmlogistica1}, \ref{thmlogistica2}, \ref{thmlogistica3}, \ref{thmlogistica4} and~\ref{thmduemisure} and Proposition~\ref{proplogistica5}.

Appendix~\ref{appendixesempio} contains 
an explicit computation to show how to check that
the conditions~\eqref{mu2} and~\eqref{mu2forte}
are satisfied in a concrete case.

\section{Preliminary results}\label{sectionpreliminaries}

In this section we introduce the functional analytical setting needed to address the problems introduced in the previous section
and we gather some preliminary observations.

To this end, for~$s\in[0,1]$, we let
\begin{equation}\label{seminormgagliardo}
[u]_s:=
\begin{cases}
\|u\|_{L^2(\R^N)}  &\mbox{ if } s=0,
\\ \\
\displaystyle\left(c_{N,s}\iint_{\R^{2N}}\frac{|u(x)-u(y)|^2}{|x-y|^{N+2s}}\,dx\,dy \right)^{1/2} &\mbox{ if } s\in(0,1),
\\ \\
\|\nabla u\|_{L^2(\R^N)}  &\mbox{ if } s=1.
\end{cases}
\end{equation}
Here above,
\begin{equation}\label{defcNs}
c_{N,s}:=\frac{2^{2s-1}\,\Gamma\left(\frac{N+2s}{2}\right)}{\pi^{N/2}\,\Gamma(2-s)}\, s(1-s),
\end{equation}
being~$\Gamma$ the Gamma function.

We define the space~$X(\R^N)$ as the completion of~$C^\infty_c (\R^N)$ with respect to the seminorm
\begin{equation}\label{normadefinizione}
\|u\|_{X}:=\left(\;\int_{[0,1]}[u]^2_s\,d\mu^+(s)\right)^{\frac12}.
\end{equation}
Also, for any open and bounded set~$\Omega\subset\R^N$
with Lipschitz boundary, we define the space 
\begin{equation}\label{XOmegadefn}
X(\Omega):=\big\{u\in X(\R^N)\;\mbox{ s.t. }\;
u\equiv 0 \mbox{ in }\R^N\setminus \Omega \big\}.
\end{equation}
We remark that
\begin{equation}\label{XHilbert}
\mbox{$X(\Omega)$ is a Hilbert space with respect to~$\|u\|_{X}$}
\end{equation}
and is endowed with the scalar product\footnote{We point out that we write
\begin{equation}\label{ocmrgexpresfyrj5784i}
\int_{[0,1]}c_{N,s}\iint_{\R^{2N}}\frac{(u(x)-u(y))(v(x)-v(y))}{|x-y|^{N+2s}}\,dx\,dy\,d\mu^+(s)\end{equation}
with an abuse of notation. Indeed, to be precise, one should write \label{footimpr2}
\begin{eqnarray*}&&
\int_{(0,1)}c_{N,s}\iint_{\R^{2N}}\frac{(u(x)-u(y))(v(x)-v(y))}{|x-y|^{N+2s}}\,dx\,dy\,d\mu^+(s)
\\&&\qquad\qquad+\mu^+(\{0\})\int_{\Omega}u(x)v(x)\,dx
+\mu^+(\{1\})\int_{\Omega}\nabla u(x)\cdot \nabla v(x)\,dx.
\end{eqnarray*}
To ease notation, unless otherwise specified,
we will always use the compact expression~\eqref{ocmrgexpresfyrj5784i}.}  defined, for any~$u$, $v\in X(\Omega)$, as
\begin{equation}\label{scalarepiu}
\langle u, v\rangle_+ :=  \int_{[0,1]}c_{N,s}\iint_{\R^{2N}}\frac{(u(x)-u(y))(v(x)-v(y))}{|x-y|^{N+2s}}\,dx\,dy\,d\mu^+(s).
\end{equation}

For future reference, recalling hypothesis~\eqref{mu1}, we also define
the scalar product
\begin{equation}\label{scalaremeno}
\langle u, v\rangle_- :=  \int_{[0,\overline s)}c_{N,s}\iint_{\R^{2N}}\frac{(u(x)-u(y))(v(x)-v(y))}{|x-y|^{N+2s}}\,dx\,dy\,d\mu^-(s).
\end{equation}

In our analysis, we will make use of the following result of Sobolev type, which states that higher exponents in fractional norms control the lower exponents with uniform constants (see~\cite[Lemma~2.1]{MR4736013}).
\begin{lemma}\label{nons} 
Let~$0\le s_1 \le s_2 \le1$. 

Then, for any measurable function~$u:\R^N\to\R$ with~$u=0$ a.e. in~$\R^N\setminus\Omega$ we have that
\begin{equation}\label{spp}
[u]_{s_1}\le C \, [u]_{s_2},
\end{equation}
for a suitable positive constant~$C=C(N,\Omega)$.
\end{lemma}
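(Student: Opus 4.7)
The approach is to pass to the Fourier side, where the Gagliardo seminorm has a clean pointwise-weighted representation that is stable at both endpoints. With the normalization of~$c_{N,s}$ built into~\eqref{defcNs} and~\eqref{AMMU0}, Plancherel yields
\[
[u]_s^2 \;=\; \int_{\R^N}(2\pi|\xi|)^{2s}\,|\hat u(\xi)|^2\,d\xi
\]
for every~$s\in(0,1)$, and this identity extends by definition to $s\in\{0,1\}$ in view of~\eqref{seminormgagliardo}. This is the decisive step, because it hides the $c_{N,s}$-factor (which degenerates at both endpoints of~$[0,1]$) inside a weight that is jointly continuous in~$s$ and~$\xi$. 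A standard density argument reduces the full class of measurable~$u$ vanishing outside~$\Omega$ in the lemma to smooth compactly supported test functions.

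To compare two exponents~$0\le s_1\le s_2\le 1$, I split~$\R^N$ in Fourier space at the radius~$1/(2\pi)$. On the outer region, $2\pi|\xi|\ge 1$ and hence~$(2\pi|\xi|)^{2s_1}\le (2\pi|\xi|)^{2s_2}$; on the inner region~$(2\pi|\xi|)^{2s_1}\le 1$. Thus
\[
[u]_{s_1}^2 \;\le\; \int_{|\xi|\le 1/(2\pi)}|\hat u(\xi)|^2\,d\xi \;+\; [u]_{s_2}^2 .
\]
Since~$u$ is supported in the bounded set~$\Omega$, Cauchy--Schwarz gives the uniform pointwise bound $\|\hat u\|_{L^\infty}\le |\Omega|^{1/2}\|u\|_{L^2}$, so the low-frequency integral is controlled by~$C_1(N,\Omega)\|u\|_{L^2}^2$. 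What remains is to absorb this $L^2$-norm into~$[u]_{s_2}^2$.

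For this last absorption I would prove a uniform Poincar\'e inequality: $\|u\|_{L^2}^2\le C_2(N,\Omega)\,[u]_{s}^2$ for every~$s\in(0,1]$ and every~$u$ vanishing outside~$\Omega$, with~$C_2$ independent of~$s$. The argument is another Fourier cutoff: writing $\|u\|_{L^2}^2=\int_{|\xi|\le\rho}|\hat u|^2+\int_{|\xi|>\rho}|\hat u|^2$, I bound the first integral by $C_N\rho^N|\Omega|\,\|u\|_{L^2}^2$ exactly as above and the second by~$(2\pi\rho)^{-2s}[u]_s^2$. Choosing~$\rho=\rho(N,\Omega)$ so small that~$C_N\rho^N|\Omega|\le 1/2$ lets me absorb the first term on the left; with~$\rho$ now fixed, the factor~$(2\pi\rho)^{-2s}$ stays bounded uniformly in~$s\in[0,1]$ by~$\max\bigl(1,(2\pi\rho)^{-2}\bigr)$. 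Plugging this into the previous display gives the lemma, with the boundary case $s_2=0$ forcing $s_1=0$ and being immediate.

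The main obstacle is precisely the uniformity in the Poincar\'e step. A direct integral-side attempt, lower-bounding~$[u]_s^2$ by restricting to pairs~$(x,y)$ with~$x\in\Omega$ and~$y$ outside a large ball, yields a prefactor of the form~$c_{N,s}/s$ which vanishes as~$s\nearrow 1$, so any such bound on $\|u\|_{L^2}^2/[u]_s^2$ blows up there; one would be forced to treat~$s$ near~$1$ separately via the classical Poincar\'e inequality. The Fourier formulation sidesteps this altogether, since the weight $(2\pi|\xi|)^{2s}$ carries no hidden~$c_{N,s}$ and a single cutoff radius depending only on~$N$ and~$\Omega$ works simultaneously for every~$s\in[0,1]$.
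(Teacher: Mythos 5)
Your argument is correct, but it follows a genuinely different route from the one the paper relies on. The paper does not prove this lemma at all: it quotes it from \cite{MR4736013}*{Lemma~2.1}, where the argument is run on the real-variable side --- the Gagliardo integral is split according to whether $|x-y|$ is below or above a length scale tied to $\Omega$, the near-diagonal part is absorbed into $[u]_{s_2}^2$ by monotonicity of $|x-y|^{-N-2s}$ in $s$, and the far part is reduced to $\|u\|_{L^2}^2$ and then back to $[u]_{s_2}^2$ via a fractional Poincar\'e inequality; uniformity of the constant then hinges on tracking the explicit factor $s(1-s)$ in~\eqref{defcNs}, which is exactly the degeneration you diagnose in your closing paragraph (your observation that the naive far-field lower bound produces a Poincar\'e constant of order $(1-s)^{-1}$ as $s\nearrow1$ is accurate). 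Your Fourier-side proof trades that bookkeeping for the normalization already built into~\eqref{defcNs}, which hides $c_{N,s}$ inside the weight $(2\pi|\xi|)^{2s}$ once and for all; after that, both the comparison of exponents and the $s$-uniform Poincar\'e inequality follow from a single frequency cutoff depending only on $N$ and $|\Omega|$, with the endpoints $s=0,1$ included automatically. Two small points to tighten: the ``standard density argument'' is not really what is needed --- if $[u]_{s_2}=+\infty$ there is nothing to prove, and if $[u]_{s_2}<+\infty$ with $s_2\in(0,1)$ the off-diagonal portion of the Gagliardo integral (pairs with $y$ far from $\Omega$) already forces $u\in L^2$, after which Plancherel applies directly to $u\in L^2$ with both sides possibly infinite, while $s_2=1$ requires the usual $H^1_0$-type reading of $[u]_1$; and the Poincar\'e step needs $s_2>0$, which you correctly dispose of by noting that $s_2=0$ forces $s_1=0$.
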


We also observe that, in this setting, we can ``reabsorb'' the negative part of the signed measure~$\mu$,  as proved in~\cite[Proposition~2.3]{MR4736013}.

\begin{proposition}\label{crucial} 
Let~$\mu$ satisfy~\eqref{mu0}, \eqref{mu1} and~\eqref{mu2}.

Then, there exists~$c_0=c_0(N,\Omega)>0$ such that, for any~$u\in X(\Omega)$,
\[
\int_{[{{ 0 }}, \overline s)} [u]_{s}^2 \, d\mu^- (s) \le c_0\,\gamma \int_{[\overline s, 1]} [u]^2_{s} \, d\mu(s).
\]
\end{proposition}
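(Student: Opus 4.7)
The plan is to reduce the desired inequality to a pointwise comparison of Gagliardo seminorms of different orders, obtained via Lemma~\ref{nons}, and then combine with the absorption hypothesis~\eqref{mu2} through an averaging argument.

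First, I would fix any~$s\in[0,\overline s)$ and any~$\tilde s\in[\overline s,1]$. Since~$s\le\overline s\le\tilde s$, Lemma~\ref{nons} produces a constant~$C=C(N,\Omega)>0$, \emph{uniform in the pair} $(s,\tilde s)$, such that
\[
[u]_s^2\le C^2\,[u]_{\tilde s}^2\qquad\text{for every }u\in X(\Omega).
\]
Next, using~\eqref{mu0} to ensure that~$\mu^+([\overline s,1])>0$, I would average this pointwise inequality in the variable~$\tilde s$ against~$d\mu^+$ over~$[\overline s,1]$, obtaining
\[
[u]_s^2 \;=\; \frac{1}{\mu^+([\overline s,1])}\int_{[\overline s,1]}[u]_s^2\,d\mu^+(\tilde s)\;\le\;\frac{C^2}{\mu^+([\overline s,1])}\int_{[\overline s,1]}[u]_{\tilde s}^2\,d\mu^+(\tilde s).
\]
Since the right-hand side no longer depends on~$s$, I can integrate the inequality in~$s$ against~$d\mu^-(s)$ over~$[0,\overline s)$ and invoke~\eqref{mu2} to bound~$\mu^-([0,\overline s))/\mu^+([\overline s,1])\le\gamma$, yielding
\[
\int_{[0,\overline s)}[u]_s^2\,d\mu^-(s)\;\le\;C^2\gamma\int_{[\overline s,1]}[u]_{\tilde s}^2\,d\mu^+(\tilde s).
\]
Finally, by~\eqref{mu1} we have~$\mu^-\equiv 0$ on~$[\overline s,1]$, so~$\mu$ and~$\mu^+$ agree there and the right-hand side may be rewritten as~$C^2\gamma\int_{[\overline s,1]}[u]_s^2\,d\mu(s)$. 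Setting~$c_0:=C^2$ completes the argument.

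There is no significant obstacle here; the scheme is essentially a weighted average of Lemma~\ref{nons}. The one point worth emphasizing is that the constant in Lemma~\ref{nons} must be uniform in the exponents~$s_1,s_2\in[0,1]$ — otherwise the pointwise bound could not be integrated against the measures. This uniformity is built into the statement of Lemma~\ref{nons} (the constant depends only on~$N$ and~$\Omega$), so the proof goes through cleanly, and the smallness of~$\gamma$ is exactly what eventually allows the negative part of the measure to be reabsorbed into the positive part.
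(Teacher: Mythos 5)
Your proof is correct, and it follows essentially the same route as the argument the paper relies on (the paper defers to \cite{MR4736013}, Proposition~2.3, whose proof is precisely this combination of Lemma~\ref{nons} with an averaging over $\mu^+$ on $[\overline s,1]$ and the absorption hypothesis~\eqref{mu2}). The key points — uniformity of the constant in Lemma~\ref{nons} and the identification of $\mu$ with $\mu^+$ on $[\overline s,1]$ via~\eqref{mu1} — are both handled properly.
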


In addition, we recall the result stated in~\cite[Proposition~2.4]{MR4736013}, which provides some useful embeddings for the space~$X(\Omega)$ defined in~\eqref{XOmegadefn}.

\begin{proposition}\label{embeddingsXOmega}
Let~$\mu$ satisfy~\eqref{mu0}, \eqref{mu1} and~\eqref{mu2}.
Let~$s_\sharp$ be as in~\eqref{scritico}.

Then, the space~$X(\Omega)$ is continuously embedded in~$H^{s_\sharp}(\Omega)$.

Furthermore, if~$N>2s_\sharp$, then the space~$X(\Omega)$ is continuously embedded in~$L^r(\Omega)$ for any~$r\in [1,2^*_{s_\sharp}]$, and if~$N\le 2s_\sharp$, then the space~$X(\Omega)$ is continuously embedded in~$L^r(\Omega)$ for any~$r\in [1,+\infty)$.

In addition, the space~$X(\Omega)$ is compactly embedded in~$L^r(\Omega)$ for any~$r\in [1,2^*_{s_\sharp})$.
\end{proposition}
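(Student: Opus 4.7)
The plan is to first establish the continuous embedding $X(\Omega)\hookrightarrow H^{s_\sharp}(\Omega)$, and then deduce the Lebesgue embeddings (both continuous and compact) by composing with the classical Sobolev and Rellich–Kondrachov theorems for fractional Sobolev spaces on bounded Lipschitz domains.

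For the first step, I would use the critical exponent property~\eqref{scritico} to isolate a positive slice of the measure: since $\mu^+\big([s_\sharp,1]\big)>0$, one has
$$\|u\|_X^2=\int_{[0,1]}[u]_s^2\,d\mu^+(s)\ge \int_{[s_\sharp,1]}[u]_s^2\,d\mu^+(s).$$
By Lemma~\ref{nons}, for every $s\in[s_\sharp,1]$ we have $[u]_{s_\sharp}\le C\,[u]_s$ with a constant $C=C(N,\Omega)$ independent of $s$. Squaring and integrating against $d\mu^+$ on $[s_\sharp,1]$ yields
$$[u]_{s_\sharp}^2\;\mu^+\big([s_\sharp,1]\big)\le C^2\,\|u\|_X^2,$$
and dividing by the strictly positive quantity $\mu^+\big([s_\sharp,1]\big)$ controls $[u]_{s_\sharp}$ by $\|u\|_X$. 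One further application of Lemma~\ref{nons} with $s_1=0$ and $s_2=s_\sharp$ bounds $\|u\|_{L^2(\Omega)}=[u]_0$ by $[u]_{s_\sharp}$; adding the two estimates produces $\|u\|_{H^{s_\sharp}(\Omega)}\le \tilde C\,\|u\|_X$, with $\tilde C=\tilde C(N,\Omega,\mu^+)$. Note that $s_\sharp\ge\overline s>0$, so we are genuinely in a fractional regime and no degenerate case arises.

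The Lebesgue embeddings now follow by composition with standard fractional Sobolev embeddings (see e.g.~\cite{MR2944369}): $H^{s_\sharp}(\Omega)\hookrightarrow L^{2^*_{s_\sharp}}(\Omega)$ continuously when $N>2s_\sharp$, and $H^{s_\sharp}(\Omega)\hookrightarrow L^r(\Omega)$ continuously for every $r\in[1,+\infty)$ when $N\le 2s_\sharp$; boundedness of $\Omega$ together with H\"older's inequality fills in the intermediate exponents $r\in[1,2^*_{s_\sharp}]$. The compactness statement for $r\in[1,2^*_{s_\sharp})$ is obtained in the same way, invoking the Rellich–Kondrachov-type theorem for fractional Sobolev spaces on bounded Lipschitz domains.

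The main obstacle is essentially notational rather than substantive: one must verify that the constant in Lemma~\ref{nons} can be chosen uniformly in $s$ over the range $[s_\sharp,1]$, so that integrating against $d\mu^+$ actually produces a meaningful bound on $[u]_{s_\sharp}$. Once this uniformity is in place (which is precisely the content of Lemma~\ref{nons} as stated), the entire proposition reduces to this single reabsorption argument followed by a routine appeal to the embedding theory for $H^{s_\sharp}(\Omega)$.
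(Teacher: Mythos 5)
Your argument is correct and coincides with the proof of the cited result (\cite[Proposition~2.4]{MR4736013}, which the paper invokes without reproving): restrict the norm to $[s_\sharp,1]$, use the uniformity in $s$ of the constant in Lemma~\ref{nons} to integrate the pointwise bound $[u]_{s_\sharp}\le C\,[u]_s$ against $d\mu^+$, divide by $\mu^+([s_\sharp,1])>0$, and then compose with the classical fractional Sobolev and Rellich--Kondrachov embeddings on a bounded Lipschitz domain. The only cosmetic remark is that the relevant degenerate case excluded by $s_\sharp\ge\overline s>0$ is $s_\sharp=0$, while $s_\sharp=1$ is perfectly admissible and simply lands you in the classical $H^1$ embedding theory.
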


We point out that the assumption on~$\mu$
in~\eqref{mu2} allows us to provide a proper reabsorbing property of the negative component of the measure (if any) and some suitable embeddings for the space~$X(\Omega)$, but it is not strong enough to let us carry on the study of the spectral properties of the operator~$L_\mu$ in~\eqref{superpositionoperator} and the existence of nonnegative solutions of problem~\eqref{problemalogistico}.

For this reason, one needs to introduce a further assumption, which is the one stated in~\eqref{mu2forte}. 

To start with, we observe that the constant~$c_{N, s}$ defined in~\eqref{defcNs} is uniformly bounded for any~$s\in (0,1)$.
In the next result we show an explicit upper bound for the constant~$c_{N, s}$, which holds for any~$s\in (0, 1)$. We provide an explicit lower bound as well, provided that~$s$ is detached from~$0$ and~$1$.

\begin{lemma}\label{lemmacns}
Let~$c_{N, s}$ be as in~\eqref{defcNs}. 
Let also
\begin{equation}\label{gammas}
\overline{\Gamma}_N:=\max_{s\in [0,1]}\dfrac{\displaystyle\Gamma\left(\dfrac{N+2s}{2}\right)}{\displaystyle\Gamma(2-s)}\qquad{\mbox{and}}\qquad
\underline{\Gamma}_N:=
\min_{s\in [0,1]}\dfrac{\displaystyle\Gamma\left(\dfrac{N+2s}{2}\right)}{\displaystyle\Gamma(2-s)}.\end{equation}
Then,
\begin{equation}\label{UPPERBCNS}
c_{N, s}\le\overline{c}_N:= \frac{2\overline{\Gamma}_N}{\pi^{N/2}}\in(0,+\infty).
\end{equation}

Moreover, let~$\overline s$ be as in~\eqref{mu0} and~$\delta\in (0, 1-\overline s]$. If~$s\in [\overline s, 1-\delta]$, then
\begin{equation}\label{LOWBCNS}
c_{N, s}\ge\underline{c}_N\,\delta,\qquad{\mbox{where}}\qquad \underline{c}_N:= \frac{\underline{\Gamma}_N\,{\overline{s}}}{2\pi^{N/2}}\in(0,+\infty).
\end{equation}
\end{lemma}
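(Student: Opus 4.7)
The plan is to bound each factor appearing in the definition \eqref{defcNs} of $c_{N,s}$ separately, on the relevant ranges of $s$, and then multiply these bounds together. Before starting the estimates, I would first check that $\overline{\Gamma}_N$ and $\underline{\Gamma}_N$ in \eqref{gammas} really are finite and strictly positive. Indeed, for $s\in[0,1]$ the arguments $(N+2s)/2\in[N/2,(N+2)/2]$ and $2-s\in[1,2]$ both lie in a compact subset of $(0,+\infty)$, where $\Gamma$ is continuous and strictly positive; hence the ratio $s\mapsto \Gamma((N+2s)/2)/\Gamma(2-s)$ is a continuous strictly positive function on the compact interval $[0,1]$, so its maximum and minimum are attained and belong to $(0,+\infty)$.

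For the upper bound \eqref{UPPERBCNS}, I would observe that for $s\in[0,1]$ we trivially have
\[
2^{2s-1}\le 2 \qquad\text{and}\qquad s(1-s)\le 1,
\]
so that $2^{2s-1}\,s(1-s)\le 2$. Combining this with $\Gamma((N+2s)/2)/\Gamma(2-s)\le \overline{\Gamma}_N$ and dividing by $\pi^{N/2}$ immediately yields $c_{N,s}\le 2\overline{\Gamma}_N/\pi^{N/2}=\overline{c}_N$, as required.

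For the lower bound \eqref{LOWBCNS}, I restrict attention to $s\in[\overline{s},1-\delta]$ and again estimate the three factors one at a time. Since $s\ge 0$, one has $2^{2s-1}\ge 2^{-1}=1/2$. Next, writing $s(1-s)$ as a product of the two factors controlled by the endpoints of the interval, the constraint $s\ge \overline{s}$ gives $s\ge \overline{s}$, while $s\le 1-\delta$ gives $1-s\ge\delta$; multiplying these yields $s(1-s)\ge \overline{s}\,\delta$. Finally $\Gamma((N+2s)/2)/\Gamma(2-s)\ge \underline{\Gamma}_N$ by definition. Putting the three together gives
\[
c_{N,s}\ge \frac{1}{\pi^{N/2}}\cdot\frac{1}{2}\cdot \underline{\Gamma}_N\cdot \overline{s}\,\delta = \underline{c}_N\,\delta,
\]
which is exactly \eqref{LOWBCNS}.

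There is no serious obstacle here; the only point worth underlining is the reason why the factor $\delta$ (and not a uniform constant) appears on the right-hand side of \eqref{LOWBCNS}: it is forced by the vanishing of the factor $s(1-s)$ at $s=1$, and detaching $s$ from $1$ through $s\le 1-\delta$ exactly buys a linear-in-$\delta$ lower bound. The factor $\overline{s}$ arises symmetrically from detaching $s$ from $0$. All other factors are uniformly controlled on $[\overline{s},1-\delta]$, so the product estimate goes through cleanly.
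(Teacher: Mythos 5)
Your proof is correct and follows essentially the same route as the paper's, which simply states that both bounds ``plainly follow'' from the definition~\eqref{defcNs} after noting the positivity and finiteness of~$\overline{\Gamma}_N$ and~$\underline{\Gamma}_N$; you have merely written out the factor-by-factor estimates ($2^{2s-1}\le 2$, $s(1-s)\le 1$ for the upper bound; $2^{2s-1}\ge \tfrac12$, $s\ge\overline{s}$, $1-s\ge\delta$ for the lower bound) that the paper leaves implicit.
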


\begin{proof}
We remark that the Gamma function~$\Gamma (t)$ is continuous and~$\Gamma(t)>0$ for any~$t>0$, so that
\begin{eqnarray*}&&0<\min_{s\in [0,1]}\Gamma\left(\dfrac{N+2s}{2}\right)\le
\max_{s\in [0,1]}\Gamma\left(\dfrac{N+2s}{2}\right)<+\infty\\
\mbox{and} &&0<\min_{s\in [0,1]}\Gamma(2-s)\le \max_{s\in [0,1]}\Gamma(2-s)<+\infty.
\end{eqnarray*}
Accordingly, we have that~$\overline{c}_N$, $\underline{c}_N\in(0,+\infty)$.

Now, the upper bound in~\eqref{UPPERBCNS} and, when~$s\in [\overline s, 1-\delta]$, the lower bound in~\eqref{LOWBCNS}
plainly follow from the definition in~\eqref{defcNs}.
\end{proof}

In light of Lemma~\ref{LOWBCNS}, we can take~$\overline\gamma$ in~\eqref{mu2forte} as
\begin{equation}\label{overlinegamma}
\overline\gamma\in\left[0, \frac{ \underline\Gamma_N \,\overline{s}}{4\overline{\Gamma}_N\,\max\{1,(2R)^2\}}\right).
\end{equation}

For our purposes, Lemma~\ref{lemmacns} needs to be combined with the following technical observation:

\begin{lemma} \label{c1mumeno.le}
Let~$\overline s$ be as in~\eqref{mu0} and~$S\in[\overline{s},1]$.

Then, for every~$\zeta\in(0,2R)$,
$$ \min\{1,(2R)^{2\overline{s}}\}\,\mu^+\big( [\overline{s},S)\big)\int_{(0,\overline{s})}\frac{d\mu^-(s)}{\zeta^{2s}}\le \max\{(2R)^{2\overline{s}},(2R)^{2}\}\,
\mu^-\big( (0,\overline{s})\big)\int_{[\overline{s},S)}\frac{d\mu^+(s)}{\zeta^{2s}}.$$
\end{lemma}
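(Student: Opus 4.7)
The plan is to recast both sides of the claimed inequality as double integrals over the product space $[\overline{s},S)\times(0,\overline{s})$ with respect to $\mu^+\otimes\mu^-$ and then reduce to a pointwise estimate on the integrands. Indeed, by Fubini's theorem,
\[
\mu^+\bigl([\overline{s},S)\bigr)\int_{(0,\overline{s})}\zeta^{-2s}\,d\mu^-(s)=\int_{[\overline{s},S)\times(0,\overline{s})}\zeta^{-2s}\,d(\mu^+\otimes\mu^-)(t,s),
\]
and an analogous identity, with integrand $\zeta^{-2t}$, holds for the right-hand side of the desired inequality. Hence it suffices to establish the pointwise estimate
\[
\min\{1,(2R)^{2\overline{s}}\}\,\zeta^{-2s}\;\le\;\max\{(2R)^{2\overline{s}},(2R)^{2}\}\,\zeta^{-2t}
\]
for every pair $(t,s)\in[\overline{s},S)\times(0,\overline{s})$.

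Dividing this inequality by the positive quantity $\min\{1,(2R)^{2\overline{s}}\}\,\zeta^{-2t}$, the task becomes to show that
\[
\zeta^{2(t-s)}\;\le\;\frac{\max\{(2R)^{2\overline{s}},(2R)^{2}\}}{\min\{1,(2R)^{2\overline{s}}\}}.
\]
Now, since $s>0$, $t<S\le 1$ and $\zeta\in(0,2R)$, the exponent $2(t-s)$ lies strictly in $(0,2)$. I would then split on the position of $\zeta$ with respect to $1$: when $\zeta\le1$, one has $\zeta^{2(t-s)}\le 1$; when $\zeta>1$, one has $\zeta^{2(t-s)}\le\zeta^{2}<(2R)^{2}$. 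In either case $\zeta^{2(t-s)}\le\max\{1,(2R)^{2}\}$, uniformly in the admissible $(t,s)$.

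To finish, I would carry out a short case distinction on whether $2R\ge 1$ or $2R<1$, using $\overline{s}\in(0,1]$ (so that $(2R)^{2\overline{s}}$ sits between $1$ and $(2R)^{2}$ in both orderings), to verify the identity
\[
\max\{1,(2R)^{2}\}=\frac{\max\{(2R)^{2\overline{s}},(2R)^{2}\}}{\min\{1,(2R)^{2\overline{s}}\}};
\]
both expressions are equal to $(2R)^{2}$ when $2R\ge 1$ and to $1$ when $2R<1$. I do not foresee a serious obstacle: the argument is essentially algebraic, the only delicate point being the strict bound $t-s<1$, which is guaranteed precisely because the $s$-interval is open at $0$ and the $t$-interval is open at its right endpoint $S\le 1$; losing either endpoint restriction would make the exponent $2(t-s)$ reach the value $2$ and ruin the second case above.
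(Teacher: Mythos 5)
Your argument is correct and is essentially the paper's proof in a slightly reorganized form: both reduce the claim to the pointwise inequality $\min\{1,(2R)^{2\overline{s}}\}\,\zeta^{-2s}\le\max\{(2R)^{2\overline{s}},(2R)^{2}\}\,\zeta^{-2t}$ on $[\overline{s},S)\times(0,\overline{s})$ and integrate against $\mu^+\otimes\mu^-$, the paper verifying it via the monotonicity of $\sigma\mapsto(2R/\zeta)^{2\sigma}$ while you split on $\zeta\lessgtr1$. (Minor remark: the strictness of $t-s<1$ that you flag is not actually needed, since $t-s\le1$ already gives $\zeta^{2(t-s)}\le\zeta^{2}<(2R)^{2}$ in the case $\zeta>1$.)
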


\begin{proof} Let~$s\in(0,\overline{s})$ and~$\sigma\in[\overline{s},S)$.
Since~$\frac{2R}{\zeta}>1$, we have that~$\left(\frac{2R}{\zeta}\right)^{2\sigma}>
\left(\frac{2R}{\zeta}\right)^{2s}$. Integrating this inequality with respect to~$\mu^+$
in~$\sigma\in[\overline{s},S)$ we conclude that
$$ \int_{[\overline{s},S)}\left(\frac{2R}{\zeta}\right)^{2\sigma}\,d\mu^+(\sigma)\ge
\mu^+\big( [\overline{s},S)\big)\left(\frac{2R}{\zeta}\right)^{2s}.$$
We now integrate this inequality with respect to~$\mu^-$ in~$s\in(0,\overline{s})$ and we obtain that
\begin{equation}\label{768594djsagdjkasgfkrtyuijbgr8e6t4y}
\mu^-\big( (0,\overline{s})\big)\int_{[\overline{s},S)}\left(\frac{2R}{\zeta}\right)^{2\sigma}\,d\mu^+(\sigma)\ge
\mu^+\big( [\overline{s},S)\big)\int_{(0,\overline{s})}\left(\frac{2R}{\zeta}\right)^{2s}\,d\mu^-(s).\end{equation}

Moreover, by the monotonicity of the exponential function, we have that, for all~$s\in(0,\overline{s})$ and~$\sigma\in[\overline{s},S)$,
$$ (2R)^{2s}\ge\min\{1,(2R)^{2\overline{s}}\}\qquad
{\mbox{and}}\qquad (2R)^{2\sigma}\le\max\{(2R)^{2\overline{s}},(2R)^{2}\}.$$
{F}rom this and~\eqref{768594djsagdjkasgfkrtyuijbgr8e6t4y},
we obtain the desired result.
\end{proof}

The following result states that, if the reabsorbing property introduced in~\eqref{mu2forte} holds true, then up to replacing~$u$ with~$|u|$, we can decrease the ``energy". This property will play a crucial role in proving that solutions of problem~\eqref{problemalogistico} can be taken to be nonnegative.

\begin{lemma}\label{lemmamodulo}
Let~$\mu$ satisfy~\eqref{mu0} and~\eqref{mu1}
for some~$\overline s\in (0, 1)$.
Let~$R>0$ be such that~$\Omega\subset B_R$ and let~$\delta\in (0, 1-\overline{s}]$. Assume that~\eqref{mu2forte} holds.

Then, for any~$u\in X(\Omega)$,
\begin{equation}\label{crucial.eq} 
\int_{[0, 1]} [|u|]^2_s \, d\mu(s) \le \int_{[0, 1]} [u]^2_s \, d\mu(s).
\end{equation}

Furthermore, the inequality in~\eqref{crucial.eq} is strict unless either~$u\ge0$ or~$u\le0$ a.e. in~$\R^N$.
\end{lemma}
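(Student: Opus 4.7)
The plan is to reduce \eqref{crucial.eq} to a pointwise comparison in the spectral variable $s$, and then to exploit Lemmas \ref{c1mumeno.le} and \ref{lemmacns} together with the reabsorption hypothesis \eqref{mu2forte} and the calibrated choice of $\overline\gamma$ in \eqref{overlinegamma} to dominate the $\mu^-$ contribution by a fraction strictly less than one of the $\mu^+$ contribution.

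First I would decompose $u = u^+ - u^-$. At the endpoints $s \in \{0,1\}$ the seminorm is insensitive to $u \mapsto |u|$: trivially $[|u|]_0 = [u]_0$, and $[|u|]_1 = [u]_1$ from the a.e.\ identity $\nabla|u| = \mathrm{sgn}(u)\,\nabla u$. For $s \in (0,1)$, the algebraic identity $(u(x)-u(y))^2 - (|u(x)|-|u(y)|)^2 = 4u^+(x)u^-(y) + 4u^-(x)u^+(y)$, combined with the $(x,y)\leftrightarrow(y,x)$ symmetry of the kernel, gives
\[
[u]_s^2 - [|u|]_s^2 \;=\; 8\, c_{N,s} \iint_{\R^{2N}} \frac{u^+(x)\, u^-(y)}{|x-y|^{N+2s}}\, dx\, dy \;\ge\; 0.
\]
Writing $d\mu = d\mu^+ - d\mu^-$ and using \eqref{mu1}, the claim \eqref{crucial.eq} is equivalent to
\[
\int_{(0,\overline s)} \bigl([u]_s^2 - [|u|]_s^2\bigr)\, d\mu^-(s) \;\le\; \int_{(0,1)} \bigl([u]_s^2 - [|u|]_s^2\bigr)\, d\mu^+(s).
\]
Substituting the formula above and applying Fubini (the integrand is supported in $\Omega\times\Omega$, so $\zeta := |x-y| \in (0,2R]$ on the relevant set), and since $u^+(x)u^-(y) \ge 0$, it suffices to prove the pointwise spectral bound
\[
\int_{(0,\overline s)} \frac{c_{N,s}}{\zeta^{2s}}\, d\mu^-(s) \;\le\; \kappa \int_{(0,1)} \frac{c_{N,s}}{\zeta^{2s}}\, d\mu^+(s)
\]
for every $\zeta \in (0,2R]$, with some $\zeta$-independent constant $\kappa < 1$.

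For the pointwise bound I would first extract $c_{N,s} \le \overline c_N$ from the $\mu^-$-integral via Lemma \ref{lemmacns}, then apply Lemma \ref{c1mumeno.le} with $S = 1-\delta$ (handling any atom of $\mu^+$ at $\{1-\delta\}$ by the limiting argument $S = 1-\delta' \nearrow 1-\delta$) to dominate it by
\[
\overline c_N\, C_R\, \frac{\mu^-\bigl((0,\overline s)\bigr)}{\mu^+\bigl([\overline s, 1-\delta]\bigr)} \int_{[\overline s, 1-\delta]} \frac{d\mu^+(s)}{\zeta^{2s}},
\]
where $C_R := \max\{(2R)^{2\overline s},(2R)^2\}/\min\{1,(2R)^{2\overline s}\} \le \max\{1,(2R)^2\}$. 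Assumption \eqref{mu2forte} now replaces the ratio by $\overline\gamma\,\delta$, and the lower bound $c_{N,s} \ge \underline c_N\, \delta$ on $[\overline s, 1-\delta]$ from Lemma \ref{lemmacns} upgrades the bare Lebesgue $\mu^+$-integral to the $c_{N,s}$-weighted one, cancelling the extra $\delta$ exactly. The resulting constant is
\[
\kappa \;=\; \frac{\overline c_N\, C_R\, \overline\gamma}{\underline c_N},
\]
which is strictly less than $1$ by the choice of $\overline\gamma$ in \eqref{overlinegamma}. The degenerate case $\mu^+\bigl([\overline s, 1-\delta]\bigr) = 0$ forces $\mu^-\bigl((0,\overline s)\bigr) = 0$ through \eqref{mu2forte}, so the $\mu^-$-integral simply vanishes and the inequality is trivial.

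For strict inequality: if $u \ge 0$ or $u \le 0$ a.e., then $|u| = \pm u$ and equality in \eqref{crucial.eq} is immediate. Conversely, if $u$ changes sign, both $u^+$ and $u^-$ are nontrivial, so $u^+(x)u^-(y)$ is positive on a positive-measure subset of $\R^{2N}$; combined with the strict excess $1-\kappa > 0$ in the pointwise bound (which keeps the spectral bracket strictly positive whenever $\mu^+$ charges the interior), this yields strict inequality. The main obstacle is the precise bookkeeping of the second step: the factor $\delta$ in \eqref{mu2forte} must be absorbed exactly by the reciprocal $\delta$ from the lower bound $c_{N,s} \ge \underline c_N\,\delta$, and the geometric factor $C_R$ must be tamed by the calibrated smallness of $\overline\gamma$ in \eqref{overlinegamma}.
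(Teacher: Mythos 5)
Your proof is correct and follows essentially the same route as the paper's: the same algebraic identity reducing \eqref{crucial.eq} to a pointwise bound in the spectral variable, the same combination of Lemma~\ref{lemmacns} and Lemma~\ref{c1mumeno.le} with $S=1-\delta$, the same exact cancellation of the factor $\delta$ against the lower bound $c_{N,s}\ge \underline{c}_N\,\delta$, and the same strictness argument via positivity of $u^+(x)u^-(y)$ on a set of positive measure. The only (welcome) refinements are your explicit limiting treatment of the endpoint $1-\delta$ and your caveat that strictness requires $\mu^+$ to charge $(0,1)$ --- the same degenerate case the paper's own footnote has to sidestep.
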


\begin{proof}
Let~$u\in X(\Omega)$. We observe that
$$ \int_{[0, 1]} [u]^2_s \, d\mu^+(s)<+\infty\qquad{\mbox{and}}\qquad
\int_{[0, 1]} [u]^2_s \, d\mu^-(s)<+\infty,$$
due to Proposition~\ref{crucial}.

Consequently, since~$\big| |u(x)|-|u(y)|\big|\le|u(x)-u(y)|$, we also have that
$$ \int_{[0, 1]} [|u|]^2_s \, d\mu^+(s)<+\infty\qquad{\mbox{and}}\qquad
\int_{[0, 1]} [|u|]^2_s \, d\mu^-(s)<+\infty.$$

As a consequence, in light of~\eqref{misure+-} and~\eqref{mu1},
the desired result in~\eqref{crucial.eq} is established if we show that
\begin{equation}\label{crucial.eq.bi}
\int_{[0, 1]} \big([|u|]^2_s - [u]^2_s\big) \,d\mu^+(s) - \int_{[0, \overline s)} \big([|u|]^2_s - [u]^2_s\big) \,d\mu^-(s)\le 0.
\end{equation}

Furthermore, we have that~$[|u|]_0 =\| |u|\|_{L^2(\Omega)}=
\| u\|_{L^2(\Omega)}=[u]_0$ and~$[|u|]_1 =\| |\nabla |u||\|_{L^2(\Omega)}=
\| |\nabla u|\|_{L^2(\Omega)}=[u]_1$. Therefore, the desired result in~\eqref{crucial.eq.bi} boils down to
\begin{equation}\label{crucial.eq.ci}
\int_{(0, 1)} \big([|u|]^2_s - [u]^2_s\big) \,d\mu^+(s) - \int_{(0, \overline s)} \big([|u|]^2_s - [u]^2_s\big) \,d\mu^-(s)\le 0.
\end{equation}
To prove this, we let~$u^\pm:=\max\{ \pm u, 0\}$ and we observe that
\begin{equation}\label{feeeyt9tirhfcs2345678hgbhfndjksjri4yti}\begin{split}&
\big| |u(x)|-|u(y)|\big|^2- |u(x)-u(y)|^2
=-2|u(x)|\,|u(y)|+2u(x)\,u(y)\\&\quad=
-2(u^+(x)+u^-(x))(u^+(y)+u^-(y))+2(u^+(x)-u^-(x))\,(u^+(y)-u^-(y))\\&\quad=-4 u^+(x)u^-(y)-4u^-(x)u^+(y).
\end{split}
\end{equation}
Therefore,
\begin{equation*}
\begin{split}
&\frac14\int_{(0, 1)} \big([|u|]^2_s - [u]^2_s\big) \,d\mu^+(s) - \frac14\int_{(0, \overline s)} \big([|u|]^2_s - [u]^2_s\big) \,d\mu^-(s)\\
&\quad= -\int_{(0, 1)}c_{N, s}\iint_{\R^{2N}}\frac{u^+(x) u^-(y)+ u^-(x)u^+(y)}{|x-y|^{N+2s}}\,dx\, dy\, d\mu^+(s)\\
&\qquad+ \int_{(0, \overline s)}c_{N, s}\iint_{\R^{2N}}\frac{u^+(x) u^-(y)+ u^-(x)u^+(y)}{|x-y|^{N+2s}}\,dx\, dy\, d\mu^-(s)\\&\quad= -\int_{(0, 1)}c_{N, s}\iint_{\Omega\times\Omega}\frac{u^+(x) u^-(y)+ u^-(x)u^+(y)}{|x-y|^{N+2s}}\,dx\, dy\, d\mu^+(s)\\
&\qquad+ \int_{(0, \overline s)}c_{N, s}\iint_{{\Omega\times\Omega}}\frac{u^+(x) u^-(y)+ u^-(x)u^+(y)}{|x-y|^{N+2s}}\,dx\, dy\, d\mu^-(s).
\end{split}
\end{equation*}
Namely,
\begin{equation}\label{crucial.eq.diPRE}
\begin{split}
&\frac14\int_{(0, 1)} \big([|u|]^2_s - [u]^2_s\big) \,d\mu^+(s) - \frac14\int_{(0, \overline s)} \big([|u|]^2_s - [u]^2_s\big) \,d\mu^-(s)\\
&\quad=\iint_{\Omega\times\Omega}\big(u^+(x) u^-(y)+ u^-(x)u^+(y)\big)\\&\qquad\qquad\times\left[-\int_{(0, 1)} \frac{c_{N, s}}{|x-y|^{N+2s}} \,d\mu^+(s) +\int_{(0, \overline s)} \frac{c_{N, s}}{|x-y|^{N+2s}} \,d\mu^-(s)\right] \,dx\, dy
\end{split}
\end{equation}
and accordingly, to prove~\eqref{crucial.eq.ci}, it suffices to show that
\begin{equation}\label{crucial.eq.di}
\int_{(0, \overline s)} \frac{c_{N, s}}{|x-y|^{N+2s}} \,d\mu^-(s) < \int_{(0, 1)} \frac{c_{N, s}}{|x-y|^{N+2s}} \,d\mu^+(s)
.\end{equation}

To prove this, we proceed as follows.
{F}rom Lemma~\ref{lemmacns}, we have that
\begin{equation}\label{c1mumeno}
\begin{split}&
\int_{(0, \overline s)} \frac{c_{N, s}}{|x-y|^{N+2s}} \,d\mu^-(s) \le \frac{\overline{c}_N}{|x-y|^N} \int_{(0, \overline s)}\frac{d\mu^-(s)}{|x-y|^{2 s}}.
\end{split}
\end{equation}
Similarly\footnote{Notice here that we are assuming~$\mu^+\big([\overline s, 1-\delta]\big)>0$, otherwise,
by~\eqref{mu2forte}, we would have~$\mu^-\big( (0,\overline s)\big)=0$
and~\eqref{crucial.eq.bi} would follow.},
\begin{equation}\label{c2mupiu}\begin{split}&
\int_{(0, 1)} \frac{c_{N, s}}{|x-y|^{N+2s}} \,d\mu^+(s) 
\ge\int_{[\overline s, 1-\delta]} \frac{c_{N, s}}{|x-y|^{N+2s}} \,d\mu^+(s)
\ge\frac{\underline{c}_N\,\delta}{|x-y|^N}
\int_{[\overline s, 1-\delta]} \frac{d\mu^+(s)}{|x-y|^{2s}}
.\end{split}
\end{equation}
In virtue of~\eqref{c1mumeno} and Lemma~\ref{c1mumeno.le}, used here with~$S:=1-\delta\in[\overline s,1)$, we conclude that
$$ \int_{(0, \overline s)} \frac{c_{N, s}}{|x-y|^{N+2s}} \,d\mu^-(s) \le \frac{\overline{c}_N
\,\max\{(2R)^{2\overline{s}},(2R)^2\}\,
\mu^-\big( (0,\overline{s})\big)
}{\min\{1,(2R)^{2\overline{s}}\}\,\mu^+\big( [\overline{s},1-\delta]\big)\,|x-y|^N} \int_{[ \overline s,1-\delta]}\frac{d\mu^+(s)}{|x-y|^{2 s}}.$$
Comparing this with~\eqref{c2mupiu} and using~\eqref{mu2forte} 
and the bounds~\eqref{UPPERBCNS} and~\eqref{LOWBCNS} in Lemma~\ref{lemmacns}, we find that
\[
\begin{split}
\int_{(0, \overline s)} \frac{c_{N, s}}{|x-y|^{N+2s}} \,d\mu^-(s) &\le \frac{\overline{c}_N\,\max\{(2R)^{2\overline{s}},(2R)^2\}\,\mu^-\big( (0,\overline{s})\big)
}{\underline{c}_N\,\delta\,\min\{1,(2R)^{2\overline{s}}\}\,\mu^+\big( [\overline{s},1-\delta]\big)} \int_{(0, 1)} \frac{c_{N, s}}{|x-y|^{N+2s}} \,d\mu^+(s)\\
&< \frac{\overline{c}_N\,\underline{\Gamma}_N\overline{s}\,\max\{(2R)^{2\overline{s}},(2R)^2\}
}{4\underline{c}_N\,\overline{\Gamma}_N\,\min\{1,(2R)^{2\overline{s}}\} } \int_{(0, 1)} \frac{c_{N, s}}{|x-y|^{N+2s}} \,d\mu^+(s)
\\
& =\frac{\max\{(2R)^{2\overline{s}},(2R)^2\}}{\min\{1,(2R)^{2\overline{s}}\}\, \max\{1, (2R)^2\}} \int_{(0, 1)} \frac{c_{N, s}}{|x-y|^{N+2s}} \,d\mu^+(s)\\
&=\int_{(0, 1)} \frac{c_{N, s}}{|x-y|^{N+2s}} \,d\mu^+(s).
\end{split}
\]
This yields~\eqref{crucial.eq.di}, as desired.

We now check the last statement in Lemma~\ref{lemmamodulo}.
For this, we observe that,
in light of~\eqref{crucial.eq.diPRE} and~\eqref{crucial.eq.di},
equality in~\eqref{crucial.eq} holds true if, for all~$x$, $y\in\Omega$,
$$ u^+(x) u^-(y)+ u^-(x)u^+(y)=0.$$
Suppose also that~$u$ is not identically zero, otherwise the claim is trivial.

Without loss of generality,
we can suppose that~$u>0$ in a subset~$U$ of~$\Omega$ with positive measure.
Then, for all~$x\in U$ and~$y\in\Omega$, we have that~$ u(x) u^-(y)=0$.
Therefore, $u^-(y)=0$ for all~$y\in\Omega$, and thus~$u\ge0$ in~$\Omega$. This completes the proof of Lemma~\ref{lemmamodulo}.
\end{proof}

We point out that the assumption~\eqref{mu2forte} is essential for Lemma~\ref{lemmamodulo} to hold. On this matter, we state the following:

\begin{proposition}\label{propositionfails}
There exist a measure~$\mu$ satisfying~\eqref{mu0}, \eqref{mu1} and~\eqref{mu2}, but not~\eqref{mu2forte}, and a function~$u\in X(\Omega)$ for which $$
\int_{[0, 1]} [|u|]^2_s \, d\mu(s) > \int_{[0, 1]} [u]^2_s \, d\mu(s).$$

In particular, the statement of Lemma~\ref{lemmamodulo} does not hold in this case.
\end{proposition}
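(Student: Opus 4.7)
The plan is to produce an explicit counterexample in which the positive part of $\mu$ charges only the endpoint $s=1$, while the negative part lives at some point $s_0 \in (0,\overline s)$. This placement makes \eqref{mu2} easy to satisfy (the Dirac mass at $s=1$ provides all the positive budget) yet automatically violates \eqref{mu2forte}, since \eqref{mu2forte} demands that the negative contribution be dominated by a genuinely nonlocal positive contribution on $[\overline s, 1-\delta]$, which will be empty in our construction. The heuristic content of the proposition is that the reabsorption lemma of Lemma~\ref{lemmamodulo} relies in an essential way on having positive nonlocal mass to compare the Dirac-type kernel $|x-y|^{-N-2s_0}$ against; if only the local Laplacian is available on the positive side, no reabsorption is possible because passing from $u$ to $|u|$ leaves the Dirichlet integral $[u]_1^2$ unchanged while strictly decreasing $[u]_{s_0}^2$.

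Concretely, I would fix~$\overline s\in(0,1)$, pick any~$s_0\in(0,\overline s)$, and set
\[
\mu^+:=\delta_1,\qquad \mu^-:=\epsilon\,\delta_{s_0},\qquad \mu:=\mu^+-\mu^-,
\]
for a parameter~$\epsilon>0$ to be chosen at the end. The verification of \eqref{mu0}, \eqref{mu1} and \eqref{mu2} is immediate: $\mu^+([\overline s,1])=1>0$, $\mu^-([\overline s,1])=0$, and $\mu^-([0,\overline s))=\epsilon\le\gamma=\gamma\,\mu^+([\overline s,1])$ as soon as~$\epsilon\le\gamma$. On the other hand, for every~$\delta\in(0,1-\overline s]$ we have~$\mu^+([\overline s,1-\delta])=\delta_1([\overline s,1-\delta])=0$, so the right-hand side of \eqref{mu2forte} vanishes while the left-hand side equals~$\epsilon>0$; therefore \eqref{mu2forte} fails for every admissible~$\delta$.

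For the function, I would take any~$u\in C^\infty_c(\Omega)\subset X(\Omega)$ that changes sign in~$\Omega$, so that both~$u^+$ and~$u^-$ are nontrivial on a set of positive measure. Using the pointwise identity
\[
|u(x)-u(y)|^2-\bigl| |u(x)|-|u(y)|\bigr|^2=4\bigl(u^+(x)u^-(y)+u^-(x)u^+(y)\bigr)\ge 0
\]
(already recorded in \eqref{feeeyt9tirhfcs2345678hgbhfndjksjri4yti}) and integrating against $c_{N,s_0}|x-y|^{-N-2s_0}$ over $\R^{2N}$, the sign-change of~$u$ ensures that the integrand is strictly positive on a set of positive measure, hence
\[
[u]^2_{s_0}-[|u|]^2_{s_0}>0.
\]
Since~$[|u|]_1=[u]_1$, direct evaluation of the integral against~$\mu$ gives
\[
\int_{[0,1]}[|u|]^2_s\,d\mu(s)-\int_{[0,1]}[u]^2_s\,d\mu(s)=\epsilon\bigl([u]^2_{s_0}-[|u|]^2_{s_0}\bigr)>0,
\]
which is exactly the claimed inequality. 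The final sentence of the statement (that Lemma~\ref{lemmamodulo} fails in this case) is then just a rephrasing of the strict inequality we have just produced.

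The only mildly delicate point is verifying that the chosen~$\mu$ genuinely fits the functional setup (so that the quantity $\int[u]_s^2\,d\mu(s)$ makes sense and $X(\Omega)$ is well defined); since $\mu^+$ is the Dirac at~$1$, the norm~$\|u\|_X$ in \eqref{normadefinizione} reduces to~$\|\nabla u\|_{L^2(\R^N)}$ and~$X(\Omega)$ becomes~$H^1_0(\Omega)$, so any sign-changing~$u\in C^\infty_c(\Omega)$ trivially lies in~$X(\Omega)$ and the negative term~$\epsilon[u]^2_{s_0}$ is finite by Lemma~\ref{nons}. No further obstacle arises; the argument is a clean consequence of the failure of any nonlocal positive mass away from~$s=1$.
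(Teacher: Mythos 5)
Your construction is exactly the one the paper uses: $\mu^+=\delta_1$, $\mu^-$ a small Dirac mass at an interior exponent, verification of \eqref{mu0}--\eqref{mu2} and failure of \eqref{mu2forte} because $\mu^+([\overline s,1-\delta])=0$, and then the identity \eqref{feeeyt9tirhfcs2345678hgbhfndjksjri4yti} applied to a sign-changing bump to get the strict inequality. The argument is correct and matches the paper's proof in all essentials.
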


\begin{proof}
Let~$\delta_1$ and~$\delta_s$ be the Dirac measures at the points~$1$ and~$s\in(0, 1)$, respectively.  
We take~$\mu^+:=\delta_1$ and~$\mu^- := \alpha\delta_s$ for some~$\alpha\in (0, \frac{1}{2c_0})$, being~$c_0>0$ as in Proposition~\ref{crucial}. Hence, in this case we have that~$
\mu=\delta_1-\alpha\delta_s$. 

Thus, \eqref{mu0} and~\eqref{mu1} are satisfied with~$\overline s:=\frac{s+1}{2}$. Moreover, 
\[
\mu^-([0, \overline s))=
\alpha \le \frac{1}{2c_0} = \frac{1}{2c_0} \mu^+([\overline s, 1]), 
\]
that is~\eqref{mu2} is satisfied with~$\gamma:=\frac{1}{2c_0}$.

On the other hand, for any~$\delta\in(0,1-\overline{s}]$, we have that
\[
\mu^-([0, \overline s)) =
\alpha > 0 = \mu^+([\overline s, 1-\delta])
\]
that is, \eqref{mu2forte} is not satisfied.

Under these assumptions, we see that
\begin{equation}\label{a0scjo39rtgjv9H9WDFIVN9dck}
\begin{split}
\int_{[0, 1]} [|u|]^2_s \, d\mu(s) - \int_{[0, 1]} [u]^2_s \, d\mu(s)&= \int_\Omega |\nabla |u||^2 \,dx -  \int_\Omega |\nabla u|^2 \,dx - \alpha [|u|]^2_s +\alpha [u]^2_s\\
&= -\alpha\big([|u|]^2_ s - [u]^2_s\big).
\end{split}
\end{equation}
Now, choosing~$u$ as the sum of a positive and a negative bump in~$\Omega$,
exploiting~\eqref{feeeyt9tirhfcs2345678hgbhfndjksjri4yti} we see that
\begin{eqnarray*}
&& \frac14\big([|u|]^2_ s - [u]^2_s\big)
=-\iint_{\Omega\times\Omega}\frac{ u^+(x)u^-(y)+u^-(x)u^+(y)}{|x-y|^{N+2s}}\,dx\,dy=
-2\iint_{\Omega\times\Omega}\frac{ u^+(x)u^-(y)}{|x-y|^{N+2s}}\,dx\,dy<0.
\end{eqnarray*}
This and~\eqref{a0scjo39rtgjv9H9WDFIVN9dck} give the desired result.
\end{proof}

Next, we prove that if a sequence is bounded in~$X(\Omega)$,
then it ``converges strongly" with respect to the negative part of 
the measure~$\mu$.

\begin{lemma}\label{lemmalimitemumeno}
Let~$\mu$ satisfy~\eqref{mu0} and~\eqref{mu1} and let~$s_\sharp$ be as in~\eqref{scritico}. Let~$u_n$ be a sequence in~$X(\Omega)$ such that~$u_n$ converges weakly to some~$u$ in~$X(\Omega)$
as~$n\to+\infty$.

Then,
\begin{equation}\label{limitemumeno1}
\lim_{n\to +\infty}  \int_{[0,\overline s)} [u_n]^2_s \, d\mu^-(s) =  \int_{[0,\overline s)} [u]^2_s \, d\mu^-(s).
\end{equation}
\end{lemma}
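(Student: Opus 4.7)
The plan is to establish the pointwise convergence $[u_n]_s^2\to[u]_s^2$ for every $s\in[0,\overline{s})$ together with a uniform dominating bound, and then close via the dominated convergence theorem on the finite measure $\mu^-$. The weak convergence gives a uniform bound $\|u_n\|_X\le M$, and Proposition~\ref{embeddingsXOmega} (applied to the measure $\mu^+$, which trivially satisfies~\eqref{mu2}) yields both the continuous control $[u_n]_{s_\sharp}\le CM$ and, via the compact embedding into $L^2(\Omega)$, the strong convergence $u_n\to u$ in $L^2(\Omega)$. Lemma~\ref{nons} promotes this to a uniform bound $[u_n]_s\le K$ for all $s\in[0,s_\sharp]$ and all $n$, producing a constant dominating function $K^2$ that is $\mu^-$-integrable since $\mu^-\big([0,\overline{s})\big)<\infty$. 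The case $s=0$ of the pointwise limit is then immediate from $[u_n-u]_0=\|u_n-u\|_{L^2(\Omega)}\to 0$.

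For $s\in(0,\overline{s})$, I would show $[u_n-u]_s\to 0$ by splitting the Gagliardo integral of $v_n:=u_n-u$ at a small scale $\delta>0$. Since $s<\overline{s}\le s_\sharp$, I pick an intermediate exponent $\tilde s\in(s,1)$ with $\tilde s\le s_\sharp$, so that $c_{N,\tilde s}>0$ by Lemma~\ref{lemmacns} and $[v_n]_{\tilde s}$ is uniformly bounded thanks to Lemma~\ref{nons}. On the near-diagonal region $\{|x-y|<\delta\}$, multiplying and dividing the integrand by $|x-y|^{2(\tilde s-s)}\le\delta^{2(\tilde s-s)}$ yields
\[
c_{N,s}\iint_{|x-y|<\delta}\frac{|v_n(x)-v_n(y)|^2}{|x-y|^{N+2s}}\,dx\,dy\le\frac{c_{N,s}}{c_{N,\tilde s}}\,\delta^{2(\tilde s-s)}\,[v_n]_{\tilde s}^2,
\]
which is small uniformly in $n$ as $\delta\to 0$. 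On the complementary region one invokes $|v_n(x)-v_n(y)|^2\le 2(v_n(x)^2+v_n(y)^2)$ together with the fact that $v_n\equiv 0$ outside $\Omega$ to bound the corresponding contribution by $C_{N,s}\,\delta^{-2s}\,\|v_n\|_{L^2(\Omega)}^2$, which tends to $0$ at fixed $\delta$ by the strong $L^2$ convergence. Taking $\delta$ small first and then letting $n\to\infty$ gives $[v_n]_s\to 0$, and the reverse triangle inequality delivers $[u_n]_s\to[u]_s$.

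The most delicate point is the choice of $\tilde s$: the obvious shortcut $\tilde s=s_\sharp$ may fail when $s_\sharp=1$, because the normalizing constant in~\eqref{defcNs} carries the factor $s(1-s)$ and hence $c_{N,s}\to 0$ as $s\to 1$. Keeping $\tilde s$ strictly below $1$ sidesteps this degeneracy while still letting Lemma~\ref{nons} transfer uniform bounds from $[\cdot]_{s_\sharp}$ down to $[\cdot]_{\tilde s}$. Once pointwise convergence and the uniform dominating bound $K^2$ are established, the dominated convergence theorem on $\mu^-$ restricted to $[0,\overline{s})$ immediately yields~\eqref{limitemumeno1}.
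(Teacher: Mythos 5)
Your proof is correct, and it takes a genuinely different route from the paper. The paper also starts from the two facts you isolate (strong $L^2(\Omega)$ convergence via the compact embedding of Proposition~\ref{embeddingsXOmega}, and a uniform bound on $[u_n-u]_{\overline s}$ coming from the weak convergence together with Lemma~\ref{nons} and~\eqref{mu0}), but it then invokes the Gagliardo--Nirenberg interpolation inequality of Brezis--Mironescu, $\|u_n-u\|_{H^s(\R^N)}\le C\,\|u_n-u\|_{L^2(\R^N)}^{\theta}\,\|u_n-u\|_{H^{\overline s}(\R^N)}^{1-\theta}$, integrates the resulting bound against $\mu^-$ to get $\int_{[0,\overline s)}[u_n-u]_s^2\,d\mu^-(s)\to0$, and finally appeals to an auxiliary lemma from~\cite{DPSV2} to pass from convergence of the differences to~\eqref{limitemumeno1}. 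You instead prove the pointwise convergence $[u_n]_s\to[u]_s$ for each fixed $s\in[0,\overline s)$ by an elementary splitting of the Gagliardo integral of $v_n=u_n-u$ at scale $\delta$ -- interpolating on the near-diagonal against a fixed intermediate exponent $\tilde s\in(s,\overline s)$ and using the tail estimate $\int_{|z|\ge\delta}|z|^{-N-2s}\,dz=C_{N,s}\,\delta^{-2s}$ off the diagonal -- and then close with dominated convergence, the constant majorant $K^2$ on the finite measure $\mu^-$ being supplied by Lemma~\ref{nons}. Your version is entirely self-contained (no external interpolation inequality and no companion-paper lemma), and it makes the uniformity in $s$ transparent, since the only constants appearing are $c_{N,s}/c_{N,\tilde s}$ at the two fixed exponents; your observation that $\tilde s$ must be kept strictly below $1$ because $c_{N,s}$ degenerates as $s\to1$ is exactly the right precaution, and your remark that Proposition~\ref{embeddingsXOmega} can be applied to $\mu^+$ alone (so that~\eqref{mu2} is not needed) is consistent with the lemma's hypotheses. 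The trade-off is only length: the paper's argument is shorter on the page at the cost of two imported results.
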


\begin{proof}
We observe that, since~$u_n$ converges weakly in~$X(\Omega)$, by Proposition~\ref{embeddingsXOmega} we deduce that
\begin{equation}\label{convL2}
{\mbox{$u_n\to u$ in~$L^2(\Omega)$ as~$n\to +\infty$.}}
\end{equation}

Also, we claim that
\begin{equation}\label{claimnormafinita}
 \|u_n - u\|_{H^{\overline s}(\R^N)} < +\infty.
\end{equation}
To prove this, it enough to show that~$[u_n- u]_{\overline s}<+\infty$. For this, we notice that,
by Lemma~\ref{nons} and~\eqref{mu0},
\begin{eqnarray*}&&
+\infty > \int_{[0, 1]} [u_n - u]^2_s \, d\mu^+(s) \ge \int_{[\overline s, 1]} [u_n - u]^2_s \, d\mu^+(s)\\&&\qquad
\ge \frac{1}{C^2}  \int_{[\overline s, 1]} [u_n - u]^2_{\overline s} \, d\mu^+(s)
=  \frac{\mu^+([\overline s, 1])}{C^2} \, [u_n - u]^2_{\overline s} ,
\end{eqnarray*}
which proves the claim in~\eqref{claimnormafinita}.

Now, for any~$s\in [0,\overline s)$, we can exploit~\cite[Theorem~1]{MR3813967} and obtain that
for any~$\theta\in (0, 1]$ there exists a positive constant~$C = C(N, \theta, \overline s)$ such that
\[
\|u_n - u\|_{H^s(\R^N)}\le C \|u_n - u\|^{\theta}_{L^2(\R^N)} \|u_n - u\|^{1-\theta}_{H^{\overline s}(\R^N)}.
\]
This, \eqref{convL2} and~\eqref{claimnormafinita} entail that
\[
\begin{split}
\lim_{n\to +\infty}\int_{[0, \overline s)} [u_n - u]^2_s \,d\mu^-(s) &\le \lim_{n\to +\infty}\int_{[0, \overline s)} \|u_n - u\|^2_{H^s(\R^N)} \,d\mu^-(s)\\
&\le C^2 \lim_{n\to +\infty}\int_{[0, \overline s)} \|u_n - u\|^{2\theta}_{L^2(\R^N)} \|u_n - u\|^{2(1-\theta)}_{H^{\overline s}(\R^N)} \,d\mu^-(s)\\
&= C^2 \, \mu^-([0, \overline s)) \lim_{n\to +\infty} \|u_n - u\|^{2\theta}_{L^2(\R^N)} \|u_n - u\|^{2(1-\theta)}_{H^{\overline s}(\R^N)}\\
&=0.
\end{split}
\]
This, together with~\cite[Lemma~5.9]{DPSV2}, gives the desired result.
\end{proof}

\section{The eigenvalue problem driven by the operator~$L_\mu$}\label{sectioneigenvalues}

In this section we address the study of the eigenvalue problem driven by the operator~$L_\mu$.

We stress that the first eigenvalue of the eigenvalue problem in~\eqref{eigenvalueproblem} 
will play a crucial role in the study of stationary solutions of the logistic  diffusion problem~\eqref{problemalogistico}. 

We start by providing some preliminary notations. 
We recall that the space~$X(\Omega)$ has been defined in~\eqref{XOmegadefn}. 

The weak formulation of the eigenvalue problem~\eqref{eigenvalueproblem} is given by
\begin{equation}\label{weakeigenvalue}
\begin{split}
&\int_{[0,1]}c_{N,s}\iint_{\R^{2N}}\frac{(u(x)-u(y))(v(x)-v(y))}{|x-y|^{N+2s}}\,dx\,dy\,d\mu^+(s)\\
&\quad -  \int_{[0,\overline s)}c_{N,s}\iint_{\R^{2N}}\frac{(u(x)-u(y))(v(x)-v(y))}{|x-y|^{N+2s}}\,dx\,dy\,d\mu^-(s) \\
&= \lambda\int_\Omega u(x) v(x)\, dx \qquad\mbox{ for any } v\in X(\Omega),
\end{split}
\end{equation}
where the notation in footnote~\ref{footimpr2} is assumed.

We recall that if there exists a nontrivial solution~$u\in X(\Omega)$ of~\eqref{weakeigenvalue}, then~$\lambda\in\R$ is called an eigenvalue of the operator~$L_\mu$. Any solution~$u\in X(\Omega)$ is called an eigenfunction associated with the eigenvalue~$\lambda$.

Furthermore, let~$I:X(\Omega)\to\R$ be the functional defined as
\begin{equation}\label{Ifunctional}
\begin{split}
I(u)&:= \frac12 \int_{[0, 1]} [u]^2_s\, d\mu^+(s) - \frac12\int_{[0, \overline s)} [u]^2_s\, d\mu^-(s) \\&= \frac12 \|u\|^2_X - \frac12\int_{[0, \overline s)} [u]^2_s\, d\mu^-(s),
\end{split}
\end{equation}
where~$\|u\|_{X}$ is the seminorm given in~\eqref{normadefinizione}.

In this setting, we have the following:

\begin{lemma}\label{lemmino1}
Let~$\mu$ satisfy~\eqref{mu0}, \eqref{mu1} and~\eqref{mu2}.  
Let~$s_\sharp$ be as in~\eqref{scritico}.

Let~$X_0$ be a nonempty, weakly closed subspace of~$X(\Omega)$ and
\[
\mathscr M:=\big\{ u\in X_0\;{\mbox{ s.t. }}\; \|u\|_{L^2(\Omega)}=1\big\}.
\]

Then, there exists~$\gamma_0>0$, depending only on~$N$ and~$\Omega$,
such that if~$\gamma\in[0,\gamma_0]$ the following statements hold true.

There exists~$u_0\in\mathscr M$ such that
\begin{equation}\label{minimo1}
\min_{u\in\mathscr M} I(u) = I(u_0)>0.
\end{equation}

In addition, for any~$v\in X_0$,
\begin{equation}\label{minimo2}
\begin{split}
&\int_{[0,1]}c_{N,s}\iint_{\R^{2N}}\frac{(u_0(x)-u_0(y))(v(x)-v(y))}{|x-y|^{N+2s}}\,dx\,dy\,d\mu^+(s)\\
&\quad -  \int_{[0,\overline s)}c_{N,s}\iint_{\R^{2N}}\frac{(u_0(x)-u_0(y))(v(x)-v(y))}{|x-y|^{N+2s}}\,dx\,dy\,d\mu^-(s) = 2I(u_0)\int_\Omega u_0(x) v(x)\, dx.
\end{split}
\end{equation}
\end{lemma}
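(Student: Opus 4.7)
The plan is to minimize $I$ over $\mathscr M$ by the direct method of the calculus of variations, and then to obtain~\eqref{minimo2} as the Euler--Lagrange equation via the standard Lagrange-multiplier trick exploiting the quadratic homogeneity of both $I$ and the $L^2$-constraint. The preparatory step is to choose $\gamma_0$ so that the negative part of $\mu$ is reabsorbed into the positive one, making $I$ both coercive and bounded below by a positive constant on $\mathscr M$.

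For this reabsorption, I would apply Proposition~\ref{crucial} to every $u\in X(\Omega)$, getting
$$\int_{[0,\overline s)}[u]_s^2\,d\mu^-(s)\;\le\;c_0\gamma\int_{[\overline s,1]}[u]_s^2\,d\mu(s)\;\le\;c_0\gamma\,\|u\|_X^2,$$
and then set $\gamma_0:=\tfrac1{2c_0}$, so that for $\gamma\in[0,\gamma_0]$ one has $I(u)\ge\tfrac14\|u\|_X^2$. The continuous embedding $X(\Omega)\hookrightarrow L^2(\Omega)$ from Proposition~\ref{embeddingsXOmega} yields a constant $C>0$ with $\|u\|_{L^2(\Omega)}\le C\|u\|_X$, hence every $u\in\mathscr M$ satisfies $\|u\|_X\ge1/C$, and therefore $\inf_{\mathscr M}I\ge\tfrac1{4C^2}>0$. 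Any minimizing sequence $u_n\in\mathscr M$ is thus bounded in the Hilbert space $X(\Omega)$ (recall~\eqref{XHilbert}), so up to a subsequence $u_n\rightharpoonup u_0$ in $X(\Omega)$; the weak closedness of $X_0$ places $u_0\in X_0$, and the compact embedding $X(\Omega)\hookrightarrow\hookrightarrow L^2(\Omega)$ gives $u_n\to u_0$ in $L^2(\Omega)$, so $\|u_0\|_{L^2(\Omega)}=1$ and $u_0\in\mathscr M$. Combining the weak lower semicontinuity of the Hilbert seminorm $\|\cdot\|_X^2$ with the weak continuity of $u\mapsto\int_{[0,\overline s)}[u]_s^2\,d\mu^-(s)$ provided by Lemma~\ref{lemmalimitemumeno} yields $I(u_0)\le\liminf_{n\to+\infty}I(u_n)=\inf_{\mathscr M}I$, which proves~\eqref{minimo1}.

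To conclude, I would derive~\eqref{minimo2} using the curve $u_t:=(u_0+tv)/\|u_0+tv\|_{L^2(\Omega)}$, well defined for $|t|$ small since $\|u_0\|_{L^2(\Omega)}=1$, and contained in $\mathscr M$. Since $I$ is quadratic, one has $I(u_t)=I(u_0+tv)/\|u_0+tv\|_{L^2(\Omega)}^2$, and polarization together with the formulas $\tfrac{d}{dt}I(u_0+tv)\big|_{t=0}=\langle u_0,v\rangle_+-\langle u_0,v\rangle_-$ (which is the full left-hand side of~\eqref{minimo2}) and $\tfrac{d}{dt}\|u_0+tv\|_{L^2(\Omega)}^2\big|_{t=0}=2\int_\Omega u_0v\,dx$ allow one to rewrite the minimality condition $\tfrac{d}{dt}I(u_t)\big|_{t=0}=0$ as~\eqref{minimo2}. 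The subtlety I would be most careful about is the weak lower semicontinuity of $I$: the positive seminorm $\|\cdot\|_X^2$ is only weakly lower semicontinuous, and the negative term subtracted from it could a priori drop in the limit and spoil the argument; Lemma~\ref{lemmalimitemumeno} is precisely the nontrivial tool that prevents this by upgrading weak convergence in $X(\Omega)$ to full convergence of the $\mu^-$-integral.
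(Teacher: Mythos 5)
Your proposal is correct and follows essentially the same strategy as the paper: reabsorb the $\mu^-$-term via Proposition~\ref{crucial} to get coercivity for $\gamma$ small, run the direct method using the compact embedding into $L^2(\Omega)$ and Lemma~\ref{lemmalimitemumeno} to handle lower semicontinuity of the signed energy, and derive~\eqref{minimo2} from the normalized curve $t\mapsto(u_0+tv)/\|u_0+tv\|_{L^2(\Omega)}$. The only (harmless) cosmetic differences are that you bound $\inf_{\mathscr M}I$ from below uniformly via the embedding constant rather than arguing at the minimizer, and you invoke weak lower semicontinuity of the Hilbert norm where the paper uses Fatou's lemma.
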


\begin{proof}
We take a minimizing sequence~$u_n\in\mathscr M$ for the functional~$I$, that is 
\begin{equation}\label{uinjomk}
\lim_{n\to +\infty} I(u_n) = \inf_{u\in\mathscr M} I(u).
\end{equation}
Accordingly,
\begin{equation}\label{ureidowvbjcask}
{\mbox{$I(u_n)\le C$ for some~$C>0$ independent of~$n$.}}\end{equation}

We point out that, in light of~\eqref{Ifunctional} and Proposition~\ref{crucial}, one has that
\begin{equation*}
I(u) \ge \frac12 (1-c_0\gamma)\|u\|^2_{X}.
\end{equation*}
Here above~$\gamma$ is the quantity appearing in the assumption~\eqref{mu2}.
Thus, if~$\gamma$ is small enough, possibly in dependence of~$c_0$, and therefore
of~$N$ and~$\Omega$, we find that, for all~$u\in X(\Omega)$,
\begin{equation}\label{574839dgmzsbc87654zsbmf4yut5we}
I(u)\ge c \|u\|^2_{X},\end{equation}
for some~$c>0$.

{F}rom this fact and~\eqref{ureidowvbjcask}, we infer that
\begin{equation}\label{unisboundedinX}
u_n \, \mbox{ is bounded in~$X(\Omega)$}.
\end{equation}
Hence, up to subsequences, there exists~$u_0\in X_0$ such that~$u_n$ converges weakly in~$X(\Omega)$ and strongly in~$L^2(\R^N)$  to~$u_0$
(thanks to Proposition~\ref{embeddingsXOmega}).
Moreover, we have that~$\|u_0\|_{L^2(\Omega)}=1$, and therefore~$u_0\in\mathscr M$.

We observe that, by Fatou's lemma, 
\begin{equation}\label{uyhbijnklm}
\begin{split}
&\liminf_{n\to +\infty} \int_{[0,1]}c_{N,s}\iint_{\R^{2N}}\frac{|u_n(x)-u_n(y)|^2}{|x-y|^{N+2s}}\,dx\,dy\,d\mu^+(s)\\
&\qquad\ge \int_{[0,1]}c_{N,s}\iint_{\R^{2N}}\frac{|u_0(x)-u_0(y)|^2}{|x-y|^{N+2s}}\,dx\,dy\,d\mu^+(s).
\end{split}
\end{equation}
{F}rom this and Lemma~\ref{lemmalimitemumeno}, we conclude that
\[
\begin{split}
\lim_{n\to +\infty} I(u_n) &=\frac12 \lim_{n\to +\infty} \left(\,\int_{[0,1]} [u_n]^2_s \, d\mu^+(s) - \int_{[0,\overline s)} [u_n]^2_s \, d\mu^-(s)\right)\\
&\ge \int_{[0,1]} [u_0]^2_s \, d\mu^+(s) - \int_{[0,\overline s)} [u_0]^2_s \, d\mu^-(s)\\
&=I(u_0) \\&\ge \inf_{u\in\mathscr M} I(u).
\end{split}
\]
Combining this and~\eqref{uinjomk}, we get the existence of a minimizer in~\eqref{minimo1}.

Moreover, we stress that~$I(u_0)>0$. Indeed,
since~$u_0\in\mathscr M$, we have that~$u_0 \not\equiv 0$. Hence, by~\eqref{Ifunctional} and~\eqref{574839dgmzsbc87654zsbmf4yut5we},
\[
I(u_0)\ge c \|u_0\|^2_X >0,
\]
as desired.

We now focus on the proof of~\eqref{minimo2}. To this end, let~$\varepsilon\in (-1, 1)\setminus\{0\}$ and~$v\in X_0$ and set
\[
u_\varepsilon(x):= \dfrac{u_0(x)+\varepsilon v(x)}{\|u_0+\varepsilon v\|_{L^2(\Omega)}}.
\]
In light of this, we have that~$u_\varepsilon\in\mathscr M$. Moreover, recalling~\eqref{scalarepiu} and~\eqref{scalaremeno}, we have
\begin{align*}
& \|u_0+\varepsilon v\|^2_{L^2(\Omega)} = 1+2\varepsilon \int_\Omega u_0(x) v(x)\, dx + \varepsilon^2 \|v\|^2_{L^2(\Omega)},\\
&  \|u_0+\varepsilon v\|^2_{X} =\|u_0\|^2_X +2\varepsilon \langle u_0, v\rangle_+ +\varepsilon^2 \|v\|^2_X,\\
\mbox{ and }\quad & \int_{[0, \overline s)} [u_0+\varepsilon v]^2_s \, d\mu^-(s)= \int_{[0, \overline s)} [u_0]^2_s \, d\mu^-(s) +2\varepsilon \langle u_0, v\rangle_- +\varepsilon^2\int_{[0, \overline s)} [v]^2_s \, d\mu^-(s).
\end{align*}
{F}rom this and~\eqref{Ifunctional}, we get
\[
\begin{split}
2 I(u_\varepsilon) &= \frac{1}{\|u_0+\varepsilon v\|^2_{L^2(\Omega)}} \left(\|u_0+\varepsilon v\|^2_{X} -\int_{[0, \overline s)}[u_0+\varepsilon v]^2_s \, d\mu^-(s) \right)\\
&= \dfrac{2I(u_0) +2\varepsilon\Big(\langle u_0, v\rangle_+ - \langle u_0, v\rangle_-\Big)+\varepsilon^2 \left(\|v\|^2_X -\displaystyle\int_{[0, \overline s)} [v]^2_s \, d\mu^-(s) \right)}{1+2\varepsilon \displaystyle\int_\Omega u_0(x) v(x)\, dx + \varepsilon^2 \|v\|^2_{L^2(\Omega)}}.
\end{split}
\]
Accordingly, we have
\[
\begin{split}
\dfrac{2I(u_\varepsilon) - 2I(u_0)}{\varepsilon} =\;& \dfrac{2\left(\langle u_0, v\rangle_+ - \langle u_0, v\rangle_- -2I(u_0)\displaystyle\int_\Omega u_0(x) v(x) \,dx\right)}{1+2\varepsilon \displaystyle\int_\Omega u_0(x) v(x)\, dx + \varepsilon^2 \|v\|^2_{L^2(\Omega)}}\\
&\qquad+ \dfrac{\varepsilon \left(\|v\|^2_X -\displaystyle\int_{[0, \overline s)} [v]^2_s \, d\mu^-(s) -2I(u_0) \|v\|^2_{L^2(\Omega)}\right)}{1+2\varepsilon \displaystyle\int_\Omega u_0(x) v(x)\, dx + \varepsilon^2 \|v\|^2_{L^2(\Omega)}}.
\end{split}
\]
Since~$u_0$ is a minimizer for~$I$ in~$\mathscr M$, by taking the limit as~$\varepsilon\searrow 0$, we have the desired identity in~\eqref{minimo2}. 
This concludes the proof.
\end{proof}

We are now in the position of proving Theorem~\ref{propabc}.

\begin{proof}[Proof of the Theorem~\ref{propabc}]
The expression of~$\lambda_\mu(\Omega)$ introduced in~\eqref{primoautovalore} follows from Lemma~\ref{lemmino1} applied with~$X_0:=X(\Omega)$. 
In particular, the existence of a minimum follows from~\eqref{minimo1}. 
Moreover, thanks to~\eqref{minimo2}, we have that~$\lambda_\mu(\Omega)$ is an eigenvalue.

Also, if~$e_\mu$ is an eigenfunction corresponding to~$\lambda_\mu(\Omega)$,
testing the equation~\eqref{weakeigenvalue} for~$e_\mu$ against itself, we find that
$$
\int_{[0,1]}[e_\mu]_s^2\,d\mu^+(s) -  \int_{[0,\overline s)}[e_\mu]^2_s\,d\mu^-(s)= \lambda_\mu(\Omega)\int_\Omega e_\mu^2(x)\,dx,$$
and therefore~$e_\mu$ is a minimum for the expression in~\eqref{primoautovalore}.

We now claim that
\begin{equation}\label{tyreuiwoqcbxnmtryueiw}
{\mbox{if~\eqref{mu2forte} holds, then every eigenfunction~$e_\mu$ does not change sign.}}\end{equation}
Indeed, since for any~$s\in [0,1]$ it holds that~$[|e_\mu|]_s\le [e_\mu]_s$, we have
\[
\int_{[0, 1]} [|e_\mu|]^2_s \, d\mu^+(s) \leq\int_{[0, 1]} [e_\mu]^2_s \, d\mu^+(s).
\]
This implies that~$|e_\mu|\in X(\Omega)$.

We also point out that~$\||e_\mu|\|_{L^2(\Omega)}=\|e_\mu\|_{L^2(\Omega)}$. 

Moreover, thanks to Lemma~\ref{lemmamodulo}
(which we can exploit, since we are assuming~\eqref{mu2forte}), we know that
\begin{equation*}
\int_{[0, 1]} [|e_\mu|]^2_s \, d\mu^+(s)-\int_{[0, 1]} [|e_\mu|]^2_s \, d\mu^-(s)
\le \int_{[0, 1]} [e_\mu]^2_s \, d\mu^+(s)-\int_{[0, 1]} [e_\mu]^2_s \, d\mu^-(s)
\end{equation*}
with strict inequality if~$e_\mu$ changes sign in~$\Omega$.

These observations and the minimality of~$e_\mu$ prove the claim in~\eqref{tyreuiwoqcbxnmtryueiw}.

Thus, from now on, thanks to~\eqref{tyreuiwoqcbxnmtryueiw}, without loss of generality, we consider~$e_\mu$ a nonnegative eigenfunction.

Hence, to complete the proof of Theorem~\ref{propabc},
it remains to check that~$\lambda_\mu(\Omega)$ is simple,
provided that the assumption in~\eqref{mu2forte} is in force.

To do this, we suppose that~$f$ is an eigenfunction corresponding to the eigenvalue~$\lambda_\mu(\Omega)$ with~$\|f\|_{L^2(\Omega)}=\|e_\mu\|_{L^2(\Omega)}$ and we 
prove that
\begin{equation}\label{gfdsk7t348tyfyheqiotywoeghihwei4t637489}
{\mbox{either~$e_\mu\equiv f$ or~$e_\mu\equiv -f$.}}\end{equation}
We observe that, thanks to~\eqref{tyreuiwoqcbxnmtryueiw}, $f$ does not change sign in~$\Omega$, and we suppose, without loss of generality, that~$f\ge0$ in~$\Omega$.
With this setting, we check that~$e_\mu\equiv f$ (the other possibility being analogous).

We set~$g:=e_\mu-f$ and we claim that
\begin{equation}\label{claimguguale0}
g\equiv 0 \quad\mbox{ a.e. in } \R^N. 
\end{equation}
To prove this, we assume by contradiction that there exists a subset~$U$ of~$\Omega$
with positive measure such that~$g\ne 0$ in~$U$.

Notice that~$g$ is an eigenfunction corresponding to the eigenvalue~$\lambda_\mu(\Omega)$, and therefore, owing to~\eqref{tyreuiwoqcbxnmtryueiw}, $g$ cannot change
sign in~$\Omega$.
This yields that either~$e_\mu\geq f$ or~$e_\mu\le f$ a.e. in~$\Omega$. Since both~$e_\mu$ and~$f$ are nonnegative in~$\Omega$, we have that
\begin{equation}\label{zsedcftgbhujm}
\mbox{either}\quad e_\mu^2\geq f^2\quad\mbox{or}\quad e_\mu^2\leq f^2\quad\mbox{ a.e.  in } \Omega.
\end{equation}
Moreover,
\[
\int_\Omega\left(e_\mu^2(x)-f_1^2(x) \right)\,dx=\|e_\mu\|_{L^2(\Omega)}^2-\|f_1\|_{L^2(\Omega)}^2=0.
\]
This, together with~\eqref{zsedcftgbhujm}, gives that~$e_\mu^2-f^2=0$ a.e. in~$\Omega$, which implies that~$g=0$ a.e. in~$\Omega$, and therefore in~$\R^N$.
We have thus reached the desired contradiction, which establishes the claim in~\eqref{claimguguale0}, and so the one in~\eqref{gfdsk7t348tyfyheqiotywoeghihwei4t637489}, as desired.
\end{proof}

Since it will be useful in the study of the logistic diffusion problem~\eqref{problemalogistico}, we provide here the following scaling property for the first eigenvalue:

\begin{proposition}\label{scalingprimoautovalore}
Let~$\Omega$ be an open and bounded subset of~$\R^N$
with Lipschitz boundary. Let~$\mu$ satisfy~\eqref{mu0} and set
$$ \Sigma:=\mbox{supp}(\mu^+).$$

Let also~$r>0$ and
\[
\Omega_r:=\{ rx \;{\mbox{ with }}\; x\in\Omega\}.
\]

Then,
\[
\inf_{s\in\Sigma} (r^{2s})\, \lambda_{\mu^+}(\Omega_r)\le \lambda_{\mu^+}(\Omega)\le \sup_{s\in\Sigma} (r^{2s})\, \lambda_{\mu^+}(\Omega_r).
\]
\end{proposition}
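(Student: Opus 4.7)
The key tool is a dilation argument. Define the scaling operator $T_r : X(\Omega) \to X(\Omega_r)$ by $(T_r u)(x) := u(x/r)$; this is a linear bijection between the two spaces because $u \equiv 0$ on $\R^N \setminus \Omega$ if and only if $T_r u \equiv 0$ on $\R^N \setminus \Omega_r$. The first step is a direct change of variables (setting $x = r x'$, $y = r y'$) in the Gagliardo seminorm~\eqref{seminormgagliardo}, which yields the uniform scaling identity
\[
[T_r u]_s^2 = r^{N-2s}\,[u]_s^2 \qquad \mbox{for every } s\in[0,1],
\]
together with $\|T_r u\|_{L^2(\Omega_r)}^2 = r^N\,\|u\|_{L^2(\Omega)}^2$. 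Since $r^{N-2s}$ is bounded as a function of $s\in[0,1]$ for fixed $r>0$, the map $T_r$ is continuous with respect to $\|\cdot\|_X$, so the identity extends from $C^\infty_c$ to all of $X(\Omega)$ by density.

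With the scaling identity in hand, both inequalities follow by plugging a minimizer of one problem into the Rayleigh quotient of the other. For the upper bound, I would invoke Theorem~\ref{propabc} applied to $\mu^+$ (which trivially satisfies~\eqref{mu1} and~\eqref{mu2} with $\gamma = 0$) to produce a minimizer $w\in X(\Omega_r)$ for $\lambda_{\mu^+}(\Omega_r)$. Taking $u:=T_r^{-1}w\in X(\Omega)$ as a test function in~\eqref{primoautovalore}, the scaling identity gives
\[
\lambda_{\mu^+}(\Omega) \,\le\, \frac{\displaystyle\int_{[0,1]} [u]_s^2\, d\mu^+(s)}{\|u\|_{L^2(\Omega)}^2} \,=\, \frac{\displaystyle\int_\Sigma r^{2s}\,[w]_s^2\, d\mu^+(s)}{\|w\|_{L^2(\Omega_r)}^2} \,\le\, \sup_{s\in\Sigma} r^{2s}\cdot \lambda_{\mu^+}(\Omega_r),
\]
where the restriction of the integral to $\Sigma = \mbox{supp}(\mu^+)$ allows us to pull out $\sup_\Sigma r^{2s}$. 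The lower bound is perfectly symmetric: starting from a minimizer $e\in X(\Omega)$ for $\lambda_{\mu^+}(\Omega)$ and plugging $T_r e\in X(\Omega_r)$ into the Rayleigh quotient for $\lambda_{\mu^+}(\Omega_r)$, the analogous calculation produces $\lambda_{\mu^+}(\Omega_r) \le \sup_{s\in\Sigma} r^{-2s}\cdot \lambda_{\mu^+}(\Omega)$, which upon rearrangement is exactly $\inf_{s\in\Sigma} r^{2s}\cdot \lambda_{\mu^+}(\Omega_r) \le \lambda_{\mu^+}(\Omega)$.

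There is essentially no obstacle in the argument; it is bookkeeping once the scaling identity is established. The one conceptual point worth noting is that, whereas a single operator $(-\Delta)^s$ rescales cleanly with the equality $\lambda_{(-\Delta)^s}(\Omega) = r^{2s}\,\lambda_{(-\Delta)^s}(\Omega_r)$, the superposition $L_{\mu^+}$ is inhomogeneous---different exponents contribute with different powers $r^{2s}$---so one cannot pull a single factor out of the integral against $\mu^+$, and the equality necessarily degrades into a two-sided bound with the scaling factor optimized over the support $\Sigma$.
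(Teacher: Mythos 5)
Your proposal is correct and follows essentially the same route as the paper: the change of variables $\overline u(x)=u(x/r)$ giving $[\overline u]_s^2=r^{N-2s}[u]_s^2$ and $\|\overline u\|_{L^2(\Omega_r)}^2=r^N\|u\|_{L^2(\Omega)}^2$, followed by bounding the factor $r^{2s}$ by its supremum (resp.\ infimum) over $\Sigma=\mbox{supp}(\mu^+)$ in the Rayleigh quotient. The only cosmetic difference is that you phrase it via explicit minimizers and the bijection $T_r$, while the paper rewrites the minimization directly; both are valid.
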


\begin{proof}
Let
\begin{equation}\label{primoautovalore+}
\lambda_{\mu^+}(\Omega):=\min_{u\in X(\Omega)\setminus\{0\}} \dfrac{\displaystyle\int_{[0, 1]} [u]^2_s \, d\mu^+(s)}{\displaystyle\int_\Omega |u(x)|^2 \,dx}.
\end{equation}
Given~$u\in X(\Omega)$, we set
\[
\overline u(x):=u\left(\frac{x}{r}\right).
\]
By definition, $\overline u\in X(\Omega_r)$. Moreover, we observe that
\[
[\overline u]_s^2 = r^{N-2s} [u]^2_s, \quad\mbox{ and }\quad \|\overline u\|^2_{L^2(\Omega_r)} = r^N \|u\|^2_{L^2(\Omega)}.
\]
{F}rom this and~\eqref{primoautovalore+}, we infer that
\begin{equation}\label{ughbinjokl1}
\begin{split}&
\lambda_{\mu^+}(\Omega) = \min_{u\in X(\Omega)\setminus\{0\}} \dfrac{\displaystyle\int_{[0, 1]} [u]^2_s \, d\mu^+(s)}{\|u\|^2_{L^2(\Omega)}} = \min_{\overline u\in X(\Omega_r)\setminus\{0\}} \dfrac{\displaystyle\int_{[0, 1]} r^{-N+2s} [\overline u]^2_s \, d\mu^+(s)}{r^{-N}\|\overline u\|^2_{L^2(\Omega_r)}}\\
&\qquad\le \sup_{s\in\Sigma} (r^{2s}) \min_{\overline u\in X(\Omega_r)\setminus\{0\}} \dfrac{\displaystyle\int_{[0, 1]} [\overline u]^2_s \, d\mu^+(s)}{\|\overline u\|^2_{L^2(\Omega_r)}} = \sup_{s\in\Sigma} (r^{2s})\,\lambda_{\mu^+}(\Omega_r).
\end{split}
\end{equation}

Similarly, one has that
\begin{equation}\label{ughbinjokl}
\lambda_{\mu^+}(\Omega)\ge\inf_{s\in\Sigma} (r^{2s}) \min_{\overline u\in X(\Omega_r)\setminus\{0\}} \dfrac{\displaystyle\int_{[0, 1]} [\overline u]^2_s \, d\mu^+(s)}{\|\overline u\|^2_{L^2(\Omega_r)}} = \inf_{s\in\Sigma} (r^{2s})\,\lambda_{\mu^+}(\Omega_r).
\end{equation}
Combining~\eqref{ughbinjokl1} and~\eqref{ughbinjokl} we obtain the desired result.
\end{proof}

\section{The logistic diffusion equation governed by the operator~$L_\mu$}\label{sectionlogistica}
In this section we address the study of problem~\eqref{problemalogistico} and we provide the proofs of Theorems~\ref{thmlogistica1}, \ref{thmlogistica2}, \ref{thmlogistica3}, \ref{thmlogistica4} and~\ref{thmduemisure} and Proposition~\ref{proplogistica5}.
To start with, we introduce the following definition (recall that the notation in footnote~\ref{footimpr2} is assumed):

\begin{definition}\label{defweaksolution}
We say that~$u\in X(\Omega)$, with~$u\ge 0$, is a weak solution of problem~\eqref{problemalogistico} if, for any~$v\in C_c^\infty (\Omega)$,
\[
\begin{split}
&\int_{[0, 1]} c_{N, s}\iint_{\R^{2N}} \frac{(u(x)-u(y))(v(x)-v(y))}{|x-y|^{N+2s}} \,dx\, dy\, d\mu^+(s)\\
&\quad-\int_{[0, \overline s)} c_{N, s}\iint_{\R^{2N}} \frac{(u(x)-u(y))(v(x)-v(y))}{|x-y|^{N+2s}} \,dx\, dy\, d\mu^-(s)\\
&\quad =\int_\Omega \sigma(x) u(x) v(x)\,dx -
\int_\Omega \nu(x) u^2(x) v(x)\, dx +\tau\int_\Omega (J\ast u)(x) v(x)\, dx.
\end{split}
\]
\end{definition}

We give the proof of Theorem~\ref{thmlogistica1} using a direct minimization argument. To this aim, we recall a useful property stated in~\cite[Lemma~2.3]{MR3579567}:

\begin{lemma}\label{lemmaJ}
Let~$v$, $w\in L^2(\Omega)$ be such that~$v(x)=w(x)=0$ for a.e.~$x\in \R^N\setminus \Omega$. 

Then,
\[
\int_\Omega v(x)(J*w)(x)\,dx\le \|v\|_{L^2(\Omega)} \|w\|_{L^2(\Omega)}.
\]
\end{lemma}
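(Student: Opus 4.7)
The plan is to reduce the statement to a standard Young-type convolution inequality of the form $\|J\ast w\|_{L^2(\R^N)}\le \|w\|_{L^2(\R^N)}$, and then to close with one application of Cauchy--Schwarz against~$v$. Since both~$v$ and~$w$ vanish outside~$\Omega$, I would first identify them with their zero extensions to~$\R^N$, so that~$\|v\|_{L^2(\Omega)}=\|v\|_{L^2(\R^N)}$ and similarly for~$w$, and rewrite the left-hand side as~$\int_{\R^N}v(x)(J\ast w)(x)\,dx$. The two ingredients provided by~\eqref{proprietàJ} that drive the argument are precisely~$J\ge 0$ (implicit in the nonnegativity assumption on~$J$) and~$\int_{\R^N}J=1$; the symmetry~$J(-x)=J(x)$ will not actually be needed for this particular bound.

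The key pointwise step is a Cauchy--Schwarz estimate for the convolution. Writing~$J(x-y)=\sqrt{J(x-y)}\cdot\sqrt{J(x-y)}$ (which is legitimate by the nonnegativity of~$J$), one obtains
\[
|(J\ast w)(x)|^2\le \left(\int_{\R^N}J(x-y)\,dy\right)\left(\int_{\R^N}J(x-y)\,w(y)^2\,dy\right)=(J\ast w^2)(x),
\]
since the first factor equals~$\|J\|_{L^1(\R^N)}=1$ by the normalization in~\eqref{proprietàJ}. Integrating in~$x$ and applying Fubini then yields
\[
\|J\ast w\|_{L^2(\R^N)}^2\le \int_{\R^N}\int_{\R^N}J(x-y)\,w(y)^2\,dx\,dy=\|J\|_{L^1(\R^N)}\,\|w\|_{L^2(\R^N)}^2=\|w\|_{L^2(\R^N)}^2.
\]

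With this in hand, a final Cauchy--Schwarz applied to~$v\cdot(J\ast w)$ on~$\R^N$ gives
\[
\int_{\Omega}v(x)(J\ast w)(x)\,dx\le \|v\|_{L^2(\R^N)}\,\|J\ast w\|_{L^2(\R^N)}\le \|v\|_{L^2(\Omega)}\,\|w\|_{L^2(\Omega)},
\]
which is the desired inequality. I do not anticipate any genuine obstacle here: the argument is a textbook estimate, and the only points worth emphasizing for clarity are the reduction from~$\Omega$ to~$\R^N$ via the zero-extension and the explicit use of~$\|J\|_{L^1(\R^N)}=1$ at both steps.
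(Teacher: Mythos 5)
Your proof is correct: the pointwise Cauchy--Schwarz step $|(J\ast w)(x)|^2\le (J\ast w^2)(x)$ together with Tonelli gives the Young-type bound $\|J\ast w\|_{L^2(\R^N)}\le\|J\|_{L^1(\R^N)}\|w\|_{L^2(\R^N)}=\|w\|_{L^2(\R^N)}$, and the final Cauchy--Schwarz closes the argument; you are also right that the symmetry of~$J$ is not needed. The paper does not prove this lemma but imports it from~\cite{MR3579567}*{Lemma~2.3}, where the argument is the same standard one (splitting $J(x-y)=\sqrt{J(x-y)}\cdot\sqrt{J(x-y)}$ in the double integral), so your route is essentially identical.
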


Let~$E:X(\Omega)\to \R$ be the functional defined as 
\begin{equation}\label{defE}
E(u):=\frac12\int_{[0,1]}[u]_s^2\,d\mu^+(s) -\frac12\int_{[0,\overline s)}[u]_s^2\,d\mu^-(s)+\int_{\Omega}\frac{\nu|u|^3}{3}\,dx-\int_{\Omega}\frac{\sigma u^2}{2}\,dx -\int_{\Omega}\frac{\tau (J*u) u}{2}\,dx.
\end{equation}

We point out that the functional in~\eqref{defE} might be unbounded from above.  However, in our analysis,  this is not relevant as we find a solution which is an absolute minimum for the functional~$E$ in~\eqref{defE}.

For the sake of completeness, we show that, under the additional assumption
\begin{equation}\label{ipotesinuinLq}
\nu \in L^q(\Omega) \ \mbox{ for } q=\frac{2^*_{s_\sharp}}{2^*_{s_\sharp}-3}\in (1, +\infty], 
\end{equation}
one has that~$|E(u)|<+\infty$. Also, we observe that hypothesis~\eqref{ipotesinuinLq} leads to a constraint on the dimension~$N$, that is~$N\le 6s_\sharp$.
This is one of the main reasons for us to work in a general setting that does not require assumption~\eqref{ipotesinuinLq}: this causes the additional difficulty of having to handle a possible unbounded functional, but it provides the benefit of obtaining our main results without any dimensional restriction.

\begin{lemma}
Let~$\mu$ satisfy~\eqref{mu0}, \eqref{mu1} and~\eqref{mu2}.
Let~$\tau\ge 0$ and assume that~\eqref{ipotesilogistica1} hold true.

Then, for every~$u\in X(\Omega)$, we have that
$$ |E(u)|<+\infty.$$
\end{lemma}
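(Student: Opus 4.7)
The plan is to decompose $E(u)$ into its five constituent pieces and bound each of them separately, relying on the structural information on $X(\Omega)$ together with the integrability hypotheses~\eqref{ipotesilogistica1} and~\eqref{ipotesinuinLq}. The first two summands of $E(u)$, those involving $d\mu^+$ and $d\mu^-$, are dealt with at once: the former equals $\tfrac12\|u\|_X^2$ and is finite by the very definition of $X(\Omega)$ in~\eqref{normadefinizione}, while the latter is controlled by a constant multiple of $\|u\|_X^2$ via the reabsorption inequality in Proposition~\ref{crucial}.

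Next, I would handle the three integrals over $\Omega$ by exploiting the continuous embedding $X(\Omega)\hookrightarrow L^{2^*_{s_\sharp}}(\Omega)$ furnished by Proposition~\ref{embeddingsXOmega}, with the understanding that in the regime $N\le 2s_\sharp$ the critical exponent $2^*_{s_\sharp}$ is replaced by any finite exponent. For the convolution term $\tfrac{\tau}{2}\int_\Omega(J\ast u)u\,dx$, Lemma~\ref{lemmaJ} yields the bound $\tfrac{\tau}{2}\|u\|_{L^2(\Omega)}^2$, which is finite. For $\tfrac12\int_\Omega \sigma u^2\,dx$, H\"older's inequality with $\sigma\in L^m(\Omega)$ against $u^2\in L^{m/(m-1)}(\Omega)$ does the job; the threshold $m>2^*_{s_\sharp}/(2^*_{s_\sharp}-2)$ imposed in~\eqref{ipotesilogistica1} is tailored exactly so that $2m/(m-1)\le 2^*_{s_\sharp}$, so that $\|u\|_{L^{2m/(m-1)}(\Omega)}$ is finite by the embedding.

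The crux is the cubic term $\tfrac13\int_\Omega \nu|u|^3\,dx$, and this is precisely what forces the introduction of the auxiliary hypothesis~\eqref{ipotesinuinLq}. The idea is to apply H\"older with $\nu\in L^q(\Omega)$ and $|u|^3\in L^{q/(q-1)}(\Omega)$; the prescribed value $q=2^*_{s_\sharp}/(2^*_{s_\sharp}-3)$ is designed so that $3q/(q-1)=2^*_{s_\sharp}$, and therefore the resulting factor $\|u\|_{L^{2^*_{s_\sharp}}(\Omega)}^3$ is finite, again by Proposition~\ref{embeddingsXOmega}. Collecting the five bounds gives the claim.

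The main obstacle is really a bookkeeping one: one must verify that the exponent thresholds in~\eqref{ipotesilogistica1} and~\eqref{ipotesinuinLq} dovetail with the Sobolev critical window provided by the embedding of $X(\Omega)$, and separate the easy case $N\le 2s_\sharp$, where the embedding is available in every $L^r(\Omega)$ with $r<+\infty$ and both H\"older applications are essentially free, from the genuine case $N>2s_\sharp$, where the thresholds become sharp but are still sufficient to close the estimate. No further analytic input is required beyond Proposition~\ref{crucial}, Proposition~\ref{embeddingsXOmega} and Lemma~\ref{lemmaJ}.
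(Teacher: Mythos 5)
Your proposal is correct and follows essentially the same route as the paper: split $E(u)$ into its five pieces, absorb the $\mu^-$ term via Proposition~\ref{crucial}, bound the convolution term by Lemma~\ref{lemmaJ}, and handle the $\sigma u^2$ and $\nu|u|^3$ terms by H\"older combined with the embedding of Proposition~\ref{embeddingsXOmega} (using the extra hypothesis~\eqref{ipotesinuinLq} for the cubic term, exactly as the paper does). The only cosmetic difference is in the $\sigma$-term, where you pair $\sigma\in L^m(\Omega)$ with $u^2\in L^{m/(m-1)}(\Omega)$ instead of first lowering $\sigma$ to $L^{2^*_{s_\sharp}/(2^*_{s_\sharp}-2)}(\Omega)$; both are valid since $2m/(m-1)<2^*_{s_\sharp}$.
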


\begin{proof}
Let~$u\in X(\Omega)$. Hence, by Proposition~\ref{crucial},
\[
\left|\,\int_{[{{ 0 }}, \overline s)} [u]_{s}^2 \, d\mu^- (s)\right| \le c_0\,\gamma \int_{[\overline s, 1]} [u]^2_{s} \, d\mu^+(s) \le c_0\,\gamma \int_{[0, 1]} [u]^2_{s} \, d\mu^+(s) = c_0 \gamma \|u\|^2_X.
\]
Moreover, by~\eqref{ipotesilogistica1}, \eqref{ipotesinuinLq}, the H\"older inequality and Proposition~\ref{embeddingsXOmega}, we have that
\[
\begin{split}&
\left|\int_{\Omega}\nu|u|^3 \,dx\right|
\le \left(\int_\Omega\nu^{\frac{2^*_{s_\sharp}}{2^*_{s_{\sharp}}-3}} \,dx \right)^{\frac{2^*_{s_{\sharp}}-3}{2^*_{s_\sharp}}} \left(\int_\Omega |u|^{2^*_{s_\sharp}} \,dx \right)^{\frac{3}{2^*_{s_\sharp}}} = \|\nu\|_{L^{2^*_{s_\sharp}/(2^*_{s_{\sharp}}-3)}(\Omega)} \| u\|^3_{L^{2^*_{s_\sharp}}(\Omega)}\\
&\qquad\qquad\le C_1 \|\nu\|_{L^{2^*_{s_\sharp}/(2^*_{s_{\sharp}}-3)}(\Omega)} \| u\|^3_X
\end{split}
\]
and 
\[
\begin{split}
&\left|\int_{\Omega}\sigma u^2 \,dx\right|\le \left(\int_\Omega \sigma^{\frac{2^*_{s_\sharp}}{2^*_{s_\sharp}-2}} \right)^{\frac{2^*_{s_\sharp}-2}{2^*_{s_\sharp}}} 
\left(\int_\Omega u^{2^*_{s_\sharp}}\right)^{\frac{2}{2^*_{s_\sharp}}} = \|\sigma\|_{L^{2^*_{s_\sharp}/(2^*_{s_{\sharp}}-2)}(\Omega)} \| u\|^2_{L^{2^*_{s_\sharp}}(\Omega)}\\
&\qquad\qquad\le C_2 \|\sigma\|_{L^{2^*_{s_\sharp}/(2^*_{s_{\sharp}}-2)}(\Omega)} \| u\|^2_X,
\end{split}
\]
for some constants~$C_1$, $C_2>0$.

In addition, by using Lemma~\ref{lemmaJ} with~$v:=u$ and~$w:=u$ and Proposition~\ref{embeddingsXOmega}, we we infer that there exists~$C_3>0$ such that
\[
\left|\int_{\Omega}\frac{\tau u(J\ast u)}{2}\,dx\right|\le \frac{\tau}{2} \|u\|^2_{L^2(\Omega)}\le \frac{C_3 \tau}{2} \|u\|^2_X.
\]
{F}rom these considerations, we deduce that~$|E(u)|<+\infty$, as desired.
\end{proof}

\begin{proposition}\label{proppunticritici}
Let~$\mu$ satisfy~\eqref{mu0}, \eqref{mu1} and~\eqref{mu2}.
Let~$\tau\ge 0$ and assume that~\eqref{ipotesilogistica1} holds true.

Then,
the nonnegative critical points of~$E$ are nonnegative weak solutions of problem~\eqref{problemalogistico}.
\end{proposition}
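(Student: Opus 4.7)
The plan is to compute the first variation of $E$ at a nonnegative critical point $u \in X(\Omega)$ in the direction of an arbitrary test function $v \in C_c^\infty(\Omega) \subset X(\Omega)$, and verify that the condition $\langle E'(u), v \rangle = 0$ is exactly the weak formulation of \eqref{problemalogistico} given in Definition~\ref{defweaksolution}.

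I would treat each term of $E(u+tv)$ individually. For the quadratic Gagliardo-type contributions $\frac12\int [u]_s^2\,d\mu^\pm(s)$, the polarization identity yields
\[
[u+tv]_s^2 = [u]_s^2 + 2t\,\langle u,v\rangle_s + t^2\,[v]_s^2,
\]
where $\langle\cdot,\cdot\rangle_s$ is the inner product associated with $[\cdot]_s$. After integrating in $d\mu^\pm(s)$ and applying Fubini (justified by Proposition~\ref{crucial}, which ensures absolute integrability), differentiation at $t=0$ produces exactly $\langle u,v\rangle_+ - \langle u,v\rangle_-$ as introduced in \eqref{scalarepiu}--\eqref{scalaremeno}, thereby reproducing the first two lines of Definition~\ref{defweaksolution}.

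For the local nonlinear term $\int_\Omega \nu |u|^3/3\,dx$, I would use the fact that the map $s\mapsto |s|^3$ is $C^1$ on $\mathbb R$ with derivative $3 s|s|$, together with the assumption $u\ge 0$ a.e., which gives $u\,|u| = u^2$. Pointwise,
\[
\frac{d}{dt}\,\frac{|u+tv|^3}{3}\bigg|_{t=0} = u\,|u|\,v = u^2\,v.
\]
Passage to the limit under the integral follows from dominated convergence with the mean-value bound $\big||u+tv|^3-|u|^3\big|/|t| \le 3(|u|+|v|)^2|v|$, which is $\nu$-integrable since $v\in C_c^\infty(\Omega)$ is bounded with compact support, and $\nu u^3$, $\nu u^2$, $\nu$ are locally integrable by the argument used in the preceding lemma (H\"older and the embedding of $X(\Omega)$ into $L^{2^\ast_{s_\sharp}}(\Omega)$). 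Hence this term contributes $\int_\Omega \nu\,u^2\,v\,dx$. The $-\frac12\int_\Omega \sigma u^2\,dx$ term contributes $-\int_\Omega \sigma u v\,dx$ by an entirely analogous (and simpler) argument.

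For the convolution term, the symmetry $J(x)=J(-x)$ from \eqref{proprietàJ} combined with Fubini yields $\int_\Omega (J\ast v)u\,dx=\int_\Omega (J\ast u)v\,dx$, so
\[
\int_\Omega \big(J\ast(u+tv)\big)(u+tv)\,dx = \int_\Omega (J\ast u)u\,dx + 2t\int_\Omega (J\ast u)v\,dx + t^2\int_\Omega (J\ast v)v\,dx,
\]
contributing $-\tau\int_\Omega (J\ast u)v\,dx$ to the derivative. Assembling all pieces, the vanishing of $\langle E'(u),v\rangle$ on $C_c^\infty(\Omega)$ is exactly the identity in Definition~\ref{defweaksolution}, and together with $u\ge 0$ in $\mathbb R^N$ and $u\equiv 0$ outside $\Omega$ (both built into the hypotheses and into $X(\Omega)$), this establishes that $u$ is a weak solution of \eqref{problemalogistico}.

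The main technical point is the differentiation of $\int_\Omega \nu |u|^3/3\,dx$: one must legitimate exchanging limit and integral despite $\nu$ being merely in a Lebesgue class and $|u|^3$ not being smooth at points where $u=0$. This is handled by restricting the test direction to $v\in C_c^\infty(\Omega)$, which localizes the dominating function, and by using the $C^1$ smoothness of $s\mapsto |s|^3$ on $\mathbb R$. Everything else is a routine bilinear expansion.
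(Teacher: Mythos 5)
Your proposal is correct and follows essentially the same route as the paper: a direct first-variation computation, expanding the quadratic Gagliardo and convolution terms bilinearly (using the symmetry of $J$) and differentiating the cubic term via $\frac{d}{dt}|u+tv|^3\big|_{t=0}=3u|u|v=3u^2v$ for $u\ge 0$. If anything, your justification of the limit for the $\nu|u|^3$ term by dominated convergence is more explicit than the paper's, which performs the expansion formally and cites an external reference for the convolution piece.
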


\begin{proof}
We evaluate the first variation of the functional in~\eqref{defE}. We start by computing the first variation of the functional
\[
J(u):= \frac12\int_{[0,1]}[u]_s^2\,d\mu^+(s) -\frac12\int_{[0,\overline s)}[u]_s^2\,d\mu^-(s)+\int_{\Omega}\frac{\nu|u|^3}{3}\,dx-\int_{\Omega}\frac{\sigma u^2}{2}\,dx.
\]
To do this, we take~$\varepsilon\in (-1, 1)\setminus\{0\}$ and~$v\in C_c^\infty(\Omega)$ and, recalling also the notation in~\eqref{scalarepiu} and~\eqref{scalaremeno}, we see that
\[
\begin{split}&
J(u+\varepsilon v) \\&= \frac12\int_{[0, 1]}[u+\varepsilon v]^2_s\, d\mu^+(s)-
\frac12\int_{[0, \overline s)} [u+\varepsilon v]^2_s\, d\mu^-(s)\\
&\quad+\frac{1}{3} \int_{\Omega}\nu\,\mbox{sign}(u+\varepsilon v)(u+\varepsilon v)^3 \,dx-\frac{1}{2}\int_{\Omega}\sigma\,(u+\varepsilon v)^2 \,dx\\
&= J(u) -\int_\Omega \frac{\nu |u|^3}{3} \,dx +\int_{\Omega}\frac{\nu}{3}\,\mbox{sign}(u+\varepsilon v)\, u^3 \, dx\\
&\quad+\varepsilon \left(\langle u, v\rangle_+ -\langle u, v\rangle_- +
\int_\Omega \nu\, \mbox{sign}(u+\varepsilon v)\, u^2 v \, dx -\int_\Omega \sigma u v \,dx\right)\\
&\quad+ \varepsilon^2 \left(\frac12\int_{[0, 1]} [v]^2_s\, d\mu^+(s) - \frac12
\int_{[0, \overline s)}[v]^2_s\, d\mu^-(s)+
\int_\Omega \nu\, \mbox{sign}(u+\varepsilon v)\, u\,v^2 \, dx - \int_\Omega \frac{\sigma\, v^2}{2} \,dx \right)\\
&\quad+\frac{\varepsilon^3}{3}\int_\Omega \nu\, \mbox{sign}(u+\varepsilon v)\, v^3\, dx.
\end{split}
\]
As a result, we get that
\begin{equation}\label{84932eyfnkdyr2387542iedgvchbjswjk}
\lim_{\varepsilon\to 0} \frac{J(u+\varepsilon v) - J(u)}{\varepsilon} = \langle u, v\rangle_+ -\langle u, v\rangle_- + \int_\Omega \nu\, |u|u v\, dx -\int_\Omega \sigma\, u v\,dx.
\end{equation}

Now we point out that
$$E(u)= J(u)-\int_{\Omega}\frac{\tau (J*u) u}{2}\,dx.$$
Therefore, from~\eqref{84932eyfnkdyr2387542iedgvchbjswjk}
and the second-last formula in the proof of Lemma~2.2 in~\cite{MR3579567}, we infer that, for any~$v\in C_c^\infty(\Omega)$,
\[
\begin{split}
\frac{d}{d\varepsilon}E(u+\varepsilon v)\big|_{\varepsilon=0} &= \int_{[0, 1]} c_{N, s}\iint_{\R^{2N}} \frac{(u(x)-u(y))(v(x)-v(y))}{|x-y|^{N+2s}} \,dx\, dy\, d\mu^+(s)\\
&\quad - \int_{[0, \overline s)} c_{N, s}\iint_{\R^{2N}} \frac{(u(x)-u(y))(v(x)-v(y))}{|x-y|^{N+2s}} \,dx\, dy\, d\mu^-(s)\\
&\quad+ \int_\Omega \nu \,|u|u v \, dx -\int_\Omega \sigma\, u v\, dx -\int_\Omega \tau (J\ast u)(x)v(x) \, dx.
\end{split}
\]
Hence, if~$u$ is a nonnegative critical point of the functional~\eqref{defE},
then~$u$ is a nonnegative weak solution of~\eqref{problemalogistico}, according to the
Definition~\ref{defweaksolution}.
\end{proof}

With this, we can now establish the following existence result:

\begin{proposition}\label{propesistenza}
Let~$\Omega$ be an open and bounded subset of~$\R^N$ with Lipschitz boundary.

Then, there exists~$\gamma_0>0$, depending only on~$N$ and~$\Omega$,
such that if~$\gamma\in[0,\gamma_0]$ the following statements hold true.

Let~$\mu$ satisfy~\eqref{mu0}, \eqref{mu1} and~\eqref{mu2}.
Let~$s_\sharp$ be as in~\eqref{scritico}.
Let~$\sigma$ and~$\nu$ satisfy~\eqref{ipotesilogistica1} and set
\begin{equation}\label{ipotesiminimizzante2}
p:=\frac{2m}{m-1}.
\end{equation}

Then, the functional~$E$ in~\eqref{defE} attains its minimum in~$X(\Omega)$. 

If in addition~$\mu$ satisfies~\eqref{mu2forte}, then
there exists a nonnegative minimizer~$u$ which is a solution of~\eqref{problemalogistico}.
\end{proposition}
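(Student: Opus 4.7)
The plan is to apply the direct method of the calculus of variations. Given a minimizing sequence $\{u_n\}\subset X(\Omega)$ for $E$, the first step is a coercivity estimate of the form
\[
E(u)\;\ge\; c\,\|u\|_X^2 \;+\; c'\int_\Omega \nu|u|^3\,dx\; -\; C,
\]
for some $c,c',C>0$. This yields simultaneously a lower bound on $E$ and boundedness of $\{u_n\}$ in the Hilbert space $X(\Omega)$ (see~\eqref{XHilbert}). By reflexivity, combined with Proposition~\ref{embeddingsXOmega}, one extracts a subsequence that converges weakly in $X(\Omega)$, strongly in $L^r(\Omega)$ for every $r\in[1,2^*_{s_\sharp})$, and a.e.\ pointwise to some $u_\infty\in X(\Omega)$. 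The next step is to prove that $E$ is sequentially weakly lower semicontinuous along this subsequence, so that $u_\infty$ is a minimizer. Finally, under hypothesis~\eqref{mu2forte}, I will replace $u_\infty$ by $|u_\infty|$ via Lemma~\ref{lemmamodulo} and invoke Proposition~\ref{proppunticritici} to identify the nonnegative minimizer as a weak solution of~\eqref{problemalogistico}.

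The main obstacle is coercivity. Proposition~\ref{crucial} gives $\tfrac12\|u\|_X^2-\tfrac12\int_{[0,\overline s)}[u]_s^2\,d\mu^-(s)\ge\tfrac12(1-c_0\gamma)\|u\|_X^2$, which is $\ge c\|u\|_X^2$ provided $\gamma_0$ is small enough, depending only on $c_0$, hence on $N$ and $\Omega$. To control the sign-indefinite contributions $-\tfrac12\int_\Omega\sigma u^2\,dx$ and $-\tfrac{\tau}{2}\int_\Omega u(J\ast u)\,dx$, I will use Lemma~\ref{lemmaJ} to bound the convolution part by $\tfrac{\tau}{2}\|u\|_{L^2(\Omega)}^2$, and then apply Young's inequality with exponents $3/2$ and $3$ in the pointwise form
\[
\sigma u^2 \;\le\; \tfrac{\varepsilon}{2}\,\nu|u|^3 + C_\varepsilon\frac{\sigma^3}{\nu^2},\qquad \tau u^2 \;\le\; \tfrac{\varepsilon}{2}\,\nu|u|^3 + C_\varepsilon\frac{\tau^3}{\nu^2}.
\]
The integrability hypothesis $(\sigma+\tau)^3/\nu^2\in L^1(\Omega)$ in~\eqref{ipotesilogistica1} guarantees that the remainders are finite, and for $\varepsilon$ sufficiently small these terms are absorbed into $\tfrac13\int_\Omega\nu|u|^3\,dx$, leaving the claimed bound. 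This is where~\eqref{ipotesilogistica1} is used in full strength: a plain Sobolev-type estimate such as $\int_\Omega\sigma u^2\le\|\sigma\|_{L^m(\Omega)}\|u\|_X^2$ would require smallness of $\|\sigma\|_{L^m(\Omega)}+\tau$, which is not assumed.

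For the weak lower semicontinuity of $E$, the positive Dirichlet part passes to the liminf by Fatou's lemma applied under $d\mu^+(s)$, exactly as in~\eqref{uyhbijnklm}; the negative Dirichlet part converges by Lemma~\ref{lemmalimitemumeno}; the cubic term is lower semicontinuous by Fatou along the a.e.\ subsequence. The remaining two terms actually converge: the assumption $m>2^*_{s_\sharp}/(2^*_{s_\sharp}-2)$ is algebraically equivalent to $p=\tfrac{2m}{m-1}<2^*_{s_\sharp}$ (cf.~\eqref{ipotesiminimizzante2}), so Proposition~\ref{embeddingsXOmega} yields $u_n\to u_\infty$ strongly in $L^p(\Omega)$; Hölder's inequality with exponents $m$ and $m/(m-1)$ then gives $\int_\Omega\sigma u_n^2\,dx\to\int_\Omega\sigma u_\infty^2\,dx$, and the convolution term converges using Lemma~\ref{lemmaJ} together with the $L^2$-convergence of $\{u_n\}$. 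Combining these facts yields $E(u_\infty)\le\liminf_n E(u_n)$, so $u_\infty$ is a minimizer.

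Finally, assume~\eqref{mu2forte}. Lemma~\ref{lemmamodulo} gives $\int_{[0,1]}[|u_\infty|]_s^2\,d\mu(s)\le\int_{[0,1]}[u_\infty]_s^2\,d\mu(s)$, while $|u_\infty|$ shares with $u_\infty$ both its $L^2$ norm and the contribution $\int_\Omega\nu|\cdot|^3\,dx$; since $J\ge 0$, one also has $(J\ast|u_\infty|)(x)\,|u_\infty(x)|\ge (J\ast u_\infty)(x)\,u_\infty(x)$ pointwise, which only makes the term $-\tfrac{\tau}{2}\int_\Omega u(J\ast u)\,dx$ smaller when $u$ is replaced by $|u|$. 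Therefore $E(|u_\infty|)\le E(u_\infty)$, producing a nonnegative minimizer, and Proposition~\ref{proppunticritici} identifies it as a nonnegative weak solution of~\eqref{problemalogistico}, completing the plan.
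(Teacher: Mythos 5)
Your proposal is correct and follows essentially the same route as the paper: coercivity via Proposition~\ref{crucial} plus the Young inequality with exponents $3/2$ and $3$ (absorbing $(\sigma+\tau)u^2$ into the cubic term with remainder $(\sigma+\tau)^3/\nu^2\in L^1(\Omega)$), weak lower semicontinuity via Fatou for the $\mu^+$ and cubic terms, Lemma~\ref{lemmalimitemumeno} for the $\mu^-$ term, and strong $L^p$/$L^2$ convergence for the $\sigma$ and convolution terms, then Lemma~\ref{lemmamodulo} and Proposition~\ref{proppunticritici} to conclude. Your explicit check that $J\ge0$ implies $(J\ast|u|)\,|u|\ge(J\ast u)\,u$, so that the convolution term does not increase when passing to $|u_\infty|$, is a detail the paper leaves implicit.
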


\begin{proof}
We first observe that, from~\eqref{ipotesiminimizzante2} it follows that~$p\in [2,2^*_{s_\sharp})$ and
\begin{equation}\label{zsedcfgyhbnjio}
\frac{2}{p}+\frac{1}{m}=1.
\end{equation}
Moreover, from Lemma~\ref{lemmaJ} used with~$v:=u$ and~$w:=u$, we get that
\begin{equation}\label{xerfvgyujnjkiol}
\int_{\Omega}\frac{\tau u(J*u)}{2}\,dx\le\frac{\tau}{2}\int_{\Omega}u^2\,dx.
\end{equation}
In addition, by using the Young inequality with conjugate exponents~$3/2$ and~$3$, we obtain that
\[
\frac{(\sigma+\tau)u^2}{2}=\frac{\nu^{\frac{2}{3}}u^2}{2^\frac{2}{3}}\cdot \frac{\sigma+\tau}{2^\frac{1}{3}\nu^\frac{2}{3}}\le\frac{\nu |u|^3}{3}+\frac{(\sigma+\tau)^3}{6\nu^2}.
\]
This and~\eqref{xerfvgyujnjkiol} imply that
\[
\int_\Omega \left(\frac{\nu |u|^3}{3} -\frac{\sigma u^2}{2}-\frac{\tau u(J*u)}{2} \right) \,dx\ge -\int_\Omega\frac{(\sigma+\tau)^3}{6\nu^2}\,dx=:-\kappa.
\]
We stress that~$\kappa>0$ does not depend on~$u$ and it is finite, thanks to~\eqref{ipotesilogistica1}. 

As a result, recalling the definition of~$E$ in~\eqref{defE} and employing Proposition~\ref{crucial}, we have that, for any~$u\in X(\Omega)$,
\[
\begin{split}
E(u)&\ge \frac{1}{2}\int_{[0,1]}[u]_s^2\,d\mu^+(s)-\frac{1}{2}\int_{[0,\overline{s})}[u]_s^2\,d\mu^-(s) -\kappa
\\&\ge \frac{1-c_0 \gamma}{2}\int_{[0,1]}[u]_s^2\,d\mu^+(s)-\kappa\\
&= \frac{1-c_0 \gamma}{2} \|u\|^2_X -\kappa.
\end{split}
\]
Taking~$\gamma$ sufficiently small in dependence of~$c_0$, and therefore of~$N$ and~$\Omega$, we thus find that~$E(u)\ge-\kappa$, which
says that the functional~$E$ is bounded from below. 

Hence, we can take a minimizing sequence~$u_n$ for the functional~$E$. Notice that~$E(0)=0$. Now, if~$0$ is the desired minimizer, there we are done.
Otherwise, we can
assume that there exists~$\overline{n}\in \N$ such that,
for any~$n\geq \overline{n}$,
\[
0=E(0)\geq E(u_n)\geq \frac{1-c_0 \gamma}{2}\|u_n\|_X^2 -\kappa.
\]

This implies that 
\[
\|u_n\|_X^2\leq \frac{2\kappa}{1-c_0 \gamma},
\]
that is, the sequence~$u_n$ is bounded in~$X(\Omega)$.
Then, up to a subsequence, $u_n$ converges weakly to some~$u$ in~$X(\Omega)$. 

We claim that~$u$ is the desired minimizer.
Indeed, from Proposition~\ref{embeddingsXOmega} we get that 
\begin{equation}\label{yftguvbhijnokmpl}
u_n\to u \ \mbox{ in~$L^r(\Omega)$ for any~$r\in [2,2^*_{s_\sharp})$}
\end{equation}
and~$u_n\to u$ a.e. in~$\R^N$ as~$n\to+\infty$.
Thus, by~\eqref{zsedcfgyhbnjio}, \eqref{yftguvbhijnokmpl} and
the H\"older inequality, we get that
\begin{equation}\label{minimizzante1}
\begin{aligned}
\limsup_{n\to+\infty}&\left|\int_\Omega \sigma(u_n^2-u^2)\,dx\right|
=\limsup_{n\to+\infty}\left|\int_\Omega \sigma(u_n-u)(u_n+u)\,dx\right| \\
&\leq \limsup_{n\to+\infty}\|\sigma\|_{L^m(\Omega)}\|u_n-u\|_{L^p(\Omega)}\|u_n+u\|_{L^p(\Omega)}=0.
\end{aligned}
\end{equation}

Moreover, we can write
\begin{equation*}
\begin{aligned}
\int_\Omega \big(u_n(J*u_n)-u(J*u) \big)\,dx
&=\int_\Omega (u_n-u)(J*u_n)\,dx+\int_\Omega u\big((J*u_n)-(J*u)\big)\,dx \\
&=\int_\Omega (u_n-u)(J*u_n)\,dx+\int_\Omega u\big(J*(u_n-u)\big)\,dx
\end{aligned}
\end{equation*}
{F}rom Lemma~\ref{lemmaJ}, used with~$v:=u_n-u$ and~$w:=u_n$ and also with~$v:=u$ and~$w:=u_n-u$, and~\eqref{yftguvbhijnokmpl}, we see that
\begin{equation}\label{zawedxcftgbhuj1}
\begin{split}&\limsup_{n\to+\infty}\left|\int_\Omega \big(u_n(J*u_n)-u(J*u) \big)\,dx\right|\\
&\quad
\le \limsup_{n\to+\infty}\|u_n-u\|_{L^2(\Omega)}\left(
\|u_n\|_{L^2(\Omega)}
+\|u\|_{L^2(\Omega)}\right)
=0.
\end{split}\end{equation}

Furthermore, since~$u_n$ converges weakly to~$u$ in~$X(\Omega)$, we can employ Lemma~\ref{lemmalimitemumeno} to find that
\begin{equation}\label{minimizzante2.5}
\lim_{n\to+\infty}\int_{[0,\overline s)}[u_n]_s^2\,d\mu^-(s)= \int_{[0,\overline s)}[u]_s^2\,d\mu^-(s)
\end{equation}
In addition, by Fatou's Lemma,
\begin{equation}\label{minimizzante3}
\liminf_{n\to+\infty}\int_{[0,1]}[u_n]_s^2\,d\mu^+(s)\ge \int_{[0,1]}[u]_s^2\,d\mu^+(s)
\end{equation}
and
\begin{equation}\label{minimizzante4}
\liminf_{n\to+\infty} \int_\Omega \nu |u_n|^3 \,dx\geq \int_\Omega \nu |u|^3 \,dx.
\end{equation}

Thus, by~\eqref{defE}, \eqref{minimizzante1}, \eqref{zawedxcftgbhuj1}, \eqref{minimizzante2.5}, \eqref{minimizzante3} and~\eqref{minimizzante4} we deduce that
\[
\liminf_{n\to+\infty} E(u_n)\ge E(u),
\]
and therefore~$u$ is the desired minimizer.

If in addition~$\mu$ satisfies~\eqref{mu2forte},
we can exploit Lemma~\ref{lemmamodulo} and we have that 
\[
\int_{[0,1]}[|u|]_s^2\,d\mu^+(s)-\int_{[0,\overline{s})}[|u|]_s^2\,d\mu^-(s)\le \int_{[0,1]}[u]_s^2\,d\mu^+(s)-\int_{[0,\overline{s})}[u]_s^2\,d\mu^-(s).
\]
Hence~$E(|u|)\le E(u)$, and thus~$u$ can be assumed to be nonnegative. 

Finally, in light of Proposition~\ref{proppunticritici}, we have that~$u$ is a nonnegative weak solution of~\eqref{problemalogistico}, as desired.
\end{proof}

\subsection{Proofs of Theorems~\ref{thmlogistica1}, \ref{thmlogistica2} and~\ref{thmlogistica3}}

In this section we prove the existence results stated in the introduction for problems~\eqref{problemalogistico} and~\eqref{problemchemotaxis}.

\begin{proof}[Proof of Theorem~\ref{thmlogistica1}]
{F}rom Proposition~\ref{propesistenza} there exists a nonnegative minimizer for the functional~$E$ defined in~\eqref{defE}, which is the desired nonnegative
weak solution of problem~\eqref{problemalogistico}.
\end{proof}

\begin{proof}[Proof of Theorem~\ref{thmlogistica2}]
We first prove the first statement in Theorem~\ref{thmlogistica2}. For this, we
suppose that~\eqref{supsigma} holds true and we assume by contradiction that there exist a nonnegative and nontrivial solution of problem~\eqref{problemalogistico}.

We claim that
\begin{equation}\label{claimu>0thm2}
\int_\Omega \nu u^3 \, dx>0 .
\end{equation}
Suppose not, namely
\[
\int_\Omega \nu u^3 \, dx = 0.
\]
{F}rom the latter identity, we have that~$\nu(x) =0$ a.e.
in~$\mbox{supp}(u)$. Hence, in light of the assumption~\eqref{ipotesilogistica1}, it follows that~$\sigma(x)=0$ a.e.
in~$\mbox{supp}(u)$ and~$\tau=0$.

Using these observations and testing the equation (recall Definition~\ref{defweaksolution}) against~$u$ itself, we thus obtain that
\[
\begin{split}
0&= \int_\Omega \sigma u^2\,dx - \int_\Omega \nu u^3\, dx 
+\tau\int_\Omega  (J\ast u) u\, dx\\
&=\int_{[0, 1]} [u]^2_s\, d\mu^+(s)-\int_{[0, \overline s)} [u]^2_s\, d\mu^-(s).\end{split}
\]
Hence,
from Proposition~\ref{crucial}, and choosing~$\gamma$ sufficiently small,
in dependence of~$N$ and~$\Omega$, we conclude that
\[
0\ge(1-c_0\gamma) \int_{[0, 1]} [u]^2_s\, d\mu^+(s)\ge c\|u\|_X^2.
\]
This entails that~$u$ is constant in~$\Omega$. 

Accordingly, since~$u=0$ in~$\R^N\setminus\Omega$, we deduce that~$u\equiv 0$ in~$\R^N$, which is a contradiction, since we assumed~$u$ to be a nontrivial solution of~\eqref{problemalogistico}. This proves the claim in~\eqref{claimu>0thm2}.

Now, we make use of Lemma~\ref{lemmaJ} with~$v=w=u$, \eqref{supsigma} and~\eqref{claimu>0thm2} to get
\begin{equation*}
\begin{split}
&\int_\Omega (\sigma-\nu u) u^2 \,dx +\int_\Omega \tau(J\ast u) u\, dx \le \int_\Omega (\sigma-\nu u +\tau) u^2 \,dx\\
&\quad\le\left(\sup_\Omega \sigma +\tau\right)\|u\|^2_{L^2(\Omega)} - \int_\Omega \nu u^3 \, dx \le\lambda_\mu (\Omega) \|u\|^2_{L^2(\Omega)} - \int_\Omega \nu u^3 \, dx\\
&\quad< \lambda_\mu (\Omega) \|u\|^2_{L^2(\Omega)}.
\end{split}
\end{equation*}
As a result, by testing the equation in
Definition~\ref{defweaksolution} against~$u$ itself and recalling~\eqref{primoautovalore}, we find that
\[
\begin{split}
\lambda_\mu(\Omega)\|u\|^2_{L^2(\Omega)}&\le\int_{[0, 1]} [u]^2_s \, d\mu^+(s)-\int_{[0, \overline s)} [u]^2_s\, d\mu^-(s)\\
&=\int_\Omega (\sigma-\nu u) u^2 \,dx +\int_\Omega \tau(J\ast u) u \,dx\\
&< \lambda_\mu (\Omega) \|u\|^2_{L^2(\Omega)},
\end{split}
\]
which is a contradiction. The first statement in Theorem~\ref{thmlogistica2}
is thereby established.

We now prove the second statement in Theorem~\ref{thmlogistica2}.
Let us suppose that~\eqref{infsigma} holds true
and let~$e_\mu$ be the first eigenfunction of the operator~$L_\mu$,
as given by Theorem~\ref{propabc}.
Without loss of generality, we can assume that~$e_\mu\ge 0$ in~$\Omega$
and that~$\|e_\mu\|^2_{L^2(\Omega)}=1$.

We set
\[
C_1:= \frac12 \int_\Omega (\sigma - \lambda_\mu(\Omega)) e^2_\mu \, dx + \frac{\tau}{2}\int_{\Omega} e_\mu(J\ast e_\mu)\,dx, 
\]
being~$\lambda_\mu(\Omega)$ as in~\eqref{primoautovalore}.

Now, recalling the definition of the functional~$E$ in~\eqref{defE}, we get
\[
\begin{split}&
E(-e_\mu) \\&= \frac12\left(\,\int_{[0,1]}[e_\mu]_s^2\,d\mu^+(s) -\int_{[0,\overline s)}[e_\mu]_s^2\,d\mu^-(s)-\int_{\Omega} \sigma e_\mu^2\,dx -\int_{\Omega} \tau e_\mu(J\ast e_\mu)\,dx\right) -\frac13\int_{\Omega} \nu\, e_\mu^3 \,dx\\
&\le\frac12 \left(\int_\Omega (\lambda_\mu(\Omega) -\sigma) e^2_\mu \, dx -\int_{\Omega} \tau e_\mu(J\ast e_\mu)\,dx\right)\\
&=-C_1.
\end{split}
\]
Accordingly, we find that
\[
E(-e_\mu) <0 = E(0),
\]
and therefore~$0$ is not a minimizer for the functional~$E$. 

Thus, thanks to Proposition~\ref{propesistenza}, 
we conclude that the functional~$E$ admits a nonnegative and nontrivial minimizer, which is a solution of~\eqref{problemalogistico}, as desired.
\end{proof}

\begin{proof}[Proof of Theorem~\ref{thmlogistica3}]
We consider a nonnegative eigenfunction~$e_{\mu^+}$ for the operator~$L_{\mu^+}$,
associated with the eigenvalue~$\lambda_{\mu^+}(\Omega)$,
as defined in~\eqref{primoautovalore+}.

Then, recalling the definitions of~$\lambda_\mu(\Omega)$ and~$\lambda_{\mu^+}(\Omega)$ in~\eqref{primoautovalore} and~\eqref{primoautovalore+}, respectively, for any~$\varepsilon\in (0,1)$ we denote by~$\mu_\varepsilon:=\mu^+-\varepsilon \mu^-$ and we have that
\begin{equation}\label{t0430tyohfjfe21456vfds}
\lambda_{\mu_\varepsilon}(\Omega)\le \dfrac{\displaystyle\int_{[0, 1]} [e_{\mu^+}]^2_s \, d\mu^+(s) - \varepsilon\displaystyle\int_{[0, \overline s)} [e_{\mu^+}]^2_s \, d\mu^-(s)}{\displaystyle\int_\Omega |e_{\mu^+}(x)|^2 \,dx}
<\dfrac{\displaystyle\int_{[0, 1]} [e_{\mu^+}]^2_s \, d\mu^+(s) }{\displaystyle\int_\Omega |e_{\mu^+}(x)|^2 \,dx}
=\lambda_{\mu^+}(\Omega),
\end{equation}
where the strict inequality comes from the fact that we assume~$\mu^-\not \equiv 0$. 

Thus, we can take~$\tau \ge 0$, $\nu\in L^\infty(\Omega)$ with~$\inf_\Omega \nu>0$
and~$\sigma\in L^\infty(\Omega)$, such that
\begin{equation}\label{zsrfcvgyhbnjikm,lp}
\lambda_{\mu_\varepsilon}(\Omega)<\inf_\Omega \sigma\leq \sup_\Omega \sigma +\tau\leq \lambda_{\mu^+}(\Omega).
\end{equation}
We observe that, with this choice, the assumption in~\eqref{ipotesilogistica1} is satisfied. 

Then, by~\eqref{zsrfcvgyhbnjikm,lp} and Theorem~\ref{thmlogistica2} we infer that problem~\eqref{problemchemotaxis} only admits the trivial
solution, while problem~\eqref{problemchemotaxis2} admits a nonnegative and nontrivial solution, as desired.
\end{proof}

\subsection{Proof of Theorem~\ref{thmlogistica4}}
We now provide the proof of Theorem~\ref{thmlogistica4}.

\begin{proof}[Proof of Theorem~\ref{thmlogistica4}]
Since~$\Omega_1$ and~$\Omega_2$ are congruent, there exists a 
rigid motion~$\mathcal{R}$ such that~$\mathcal{R}(\Omega_1)=\Omega_2$.
Let~$e_1\in X(\Omega_1)$ be the eigenfunction associated with~$\lambda_\mu(\Omega_1)$ normalized in such a way that~$\|e_1\|^2_{L^2(\Omega_1)}=1$. Then, we have that~$e_1 \circ \mathcal{R}^{-1}\in X(\Omega_2)$. Also, by a suitable change of variables, we have that~$\|e_1\|_{L^2(\Omega_1)}=\|e_1\circ \mathcal{R}^{-1}\|_{L^2(\Omega_2)}=1$ and~$[e_1]_s^2=[e_1\circ \mathcal{R}^{-1}]_s^2$ for any~$s\in (0,1]$.
{F}rom these observations and recalling~\eqref{primoautovalore}, we infer that
\begin{equation}\label{congruenti}
\lambda_\mu(\Omega_1)=\lambda_\mu(\Omega_2).
\end{equation}

Now, we claim that 
\begin{equation}\label{claimunione}
\lambda_\mu(\Omega_1 \cup \Omega_2)<\lambda_\mu(\Omega_1).
\end{equation}
Indeed, let~$e_1\in X(\Omega_1)$ and~$e_2\in X(\Omega_2)$ be the 
first eigenfunctions associated with~$\lambda_\mu(\Omega_1)$ and~$\lambda_\mu(\Omega_2)$, respectively,
satisfying~$\|e_1\|_{L^2(\Omega_1)}=\|e_2\|_{L^2(\Omega_2)}=1$.
We set~$e:=e_1+e_2$. Then, $e\in X(\Omega_1\cup \Omega_2)$ and,
since~$\mbox{supp}(e_1)\cap \mbox{supp}(e_2)=\varnothing$,
\begin{equation}\label{equnione1}
\|e\|_{L^2(\Omega_1\cup \Omega_2)}^2
=\|e_1\|_{L^2(\Omega_1)}^2+\|e_2\|_{L^2(\Omega_2)}^2=2.
\end{equation}
If~$\mu^+(\{1\})\neq 0$, we also point out
that~$\mbox{supp}(\nabla e_1)\cap \mbox{supp}(\nabla e_2)=\varnothing$.

Moreover, from the fact that~$e\in X(\Omega_1\cup \Omega_2)$ and Proposition~\ref{crucial}, we infer that
\[
\int_{[0, 1]} [e]^2_s \, d\mu^+(s)<+\infty\qquad{\mbox{and}}\qquad
\int_{[0, 1]} [e]^2_s \, d\mu^-(s) <+\infty .\]
Gathering these pieces of information, we conclude that
\begin{equation}\label{equnione2}
\begin{aligned}&
\int_{[0, 1]} [e]^2_s \, d\mu(s) \\
&=\int_{[0, 1]} [e]^2_s \, d\mu^+(s) - \int_{[0, \overline s)} [e]^2_s \, d\mu^-(s)\\
&=\int_{[0, 1]} [e_1]^2_s \, d\mu^+(s) - \int_{[0, \overline s)} [e_1]^2_s \, d\mu^-(s) 
+\int_{[0, 1]} [e_2]^2_s \, d\mu^+(s) - \int_{[0, \overline s)} [e_2]^2_s \, d\mu^-(s) \\
&\quad +2\int_{(0, 1)}c_{N,s}\iint_{\R^{2N}}\frac{(e_1(x)-e_1(y))(e_2(x)-e_2(y))}{|x-y|^{N+2s}}\,dx\,dy\,d\mu^+(s)\\
&\quad +2\mu^+(\{0\})\int_{\Omega_1\cup \Omega_2}e_1(x)e_2(x)\,dx
+2\mu^+(\{1\})\int_{\Omega_1\cup \Omega_2}\nabla e_1(x)\cdot \nabla e_2(x)\,dx \\
&\quad-2\int_{(0, \overline{s})}c_{N,s}\iint_{\R^{2N}}\frac{(e_1(x)-e_1(y))(e_2(x)-e_2(y))}{|x-y|^{N+2s}}\,dx\,dy\,d\mu^-(s) \\
&\quad -2\mu^+(\{0\})\int_{\Omega_1\cup \Omega_2}e_1(x)e_2(x)\,dx \\
&=\lambda_\mu(\Omega_1)+\lambda_\mu(\Omega_2) \\
&\quad +2\int_{(0, 1)}c_{N,s}\iint_{\R^{2N}}\frac{(e_1(x)-e_1(y))(e_2(x)-e_2(y))}{|x-y|^{N+2s}}\,dx\,dy\,d\mu^+(s) \\
&\quad -2\int_{(0, \overline{s})}c_{N,s}\iint_{\R^{2N}}\frac{(e_1(x)-e_1(y))(e_2(x)-e_2(y))}{|x-y|^{N+2s}}\,dx\,dy\,d\mu^-(s).
\end{aligned}
\end{equation}

Now, since~$e_1=0$ in~$\R^N\setminus \Omega_1$ and~$e_2=0$ in~$\R^N\setminus \Omega_2$ and~$\overline{\Omega_1}\cap\overline{\Omega_2}=\varnothing$, we have that, for all~$s\in(0,1)$,
\begin{equation*}
\begin{aligned}
\iint_{\R^{2N}}&\frac{(e_1(x)-e_1(y))(e_2(x)-e_2(y))}{|x-y|^{N+2s}}\,dx\,dy  \\
&=-\iint_{\Omega_1 \times \Omega_2}\frac{e_1(x)e_2(y)}{|x-y|^{N+2s}}\,dx\,dy -\iint_{\Omega_2 \times \Omega_1}\frac{e_2(x)e_1(y)}{|x-y|^{N+2s}}\,dx\,dy.
\end{aligned}
\end{equation*}
Plugging this information into~\eqref{equnione2} we thus find that 
\begin{equation*}
\begin{aligned}
\int_{[0, 1]} &[e]^2_s \, d\mu^+(s) - \int_{[0, \overline s)}[e]^2_s \, d\mu^-(s) =\lambda_\mu(\Omega_1)+\lambda_\mu(\Omega_2) \\
&-2\iint_{\Omega_1 \times \Omega_2}e_1(x)e_2(y)\left(\;\int_{(0, 1)}\frac{c_{N,s}}{|x-y|^{N+2s}}\,d\mu^+(s)-\int_{(0, \overline{s})}\frac{c_{N,s}}{|x-y|^{N+2s}}\,d\mu^-(s)\right)\,dx\,dy \\
&-2\iint_{\Omega_2 \times \Omega_1}e_2(x)e_1(y)\left(\;\int_{(0, 1)}\frac{c_{N,s}}{|x-y|^{N+2s}}\,d\mu^+(s)-\int_{(0, \overline{s})}\frac{c_{N,s}}{|x-y|^{N+2s}}\,d\mu^-(s)\right)\,dx\,dy.
\end{aligned}
\end{equation*}
{F}rom this, recalling~\eqref{crucial.eq.di} and we conclude that
\begin{equation}\label{equnione7}
\int_{[0, 1]} [e]^2_s \, d\mu^+(s) - \int_{[0, \overline s)}[e]^2_s \, d\mu^-(s) <\lambda_\mu(\Omega_1)+\lambda_\mu(\Omega_2).
\end{equation}

As a result, the minimality of~$\lambda_\mu(\Omega_1 \cup \Omega_2)$,
\eqref{congruenti}, \eqref{equnione1} and~\eqref{equnione7} entail that
\[
\lambda_\mu(\Omega_1 \cup \Omega_2)\le \frac{\displaystyle\int_{[0, 1]} [e]^2_s \, d\mu^+(s) - \displaystyle\int_{[0, \overline s)}[e]^2_s \, d\mu^-(s)}{\displaystyle\int_{\Omega_1 \cup \Omega_2}|e(x)|^2\,dx} < \frac{\lambda_\mu(\Omega_1)+\lambda_\mu(\Omega_2)}{2}
=\lambda_\mu(\Omega_1).
\]
This proves the claim in~\eqref{claimunione}.

Hence, in light of~\eqref{claimunione},
we can take~$\tau\ge 0$, $\nu\in L^\infty(\Omega_1\cup \Omega_2)$ with~$\inf_{\Omega_1\cup \Omega_2}\nu>0$ and~$\sigma\in L^\infty(\Omega_1\cup \Omega_2)$, satisfying
\begin{equation}\label{equnione8}
\lambda_\mu(\Omega_1 \cup \Omega_2)<\inf_{\Omega_1 \cup \Omega_2}\sigma \leq \sup_{\Omega_1 \cup \Omega_2}\sigma +\tau 
\leq \lambda_\mu(\Omega_1). 
\end{equation}
We notice that~\eqref{ipotesilogistica1} is satisfied both in~$\Omega_1$ and~$\Omega_2$. 

Thus, by~\eqref{equnione8} and Theorem~\ref{thmlogistica2} we get that, for
any~$i\in \{1,2\}$, problem~\eqref{problemai} only admits the trivial
solution, while problem~\eqref{problemalogistico} (here~$\Omega:=\Omega_1\cup \Omega_2$) admits a nontrivial solution.
\end{proof}

We point out that the assumption~$\mu^+\big((0,1)\big)>0$ in Theorem~\ref{thmlogistica4} is needed to establish~\eqref{claimunione}. Indeed, if~$\mu^+\big((0,1)\big)=0$, namely if only local interactions occur, then~\eqref{claimunione} fails (see~\cite[formula~(30) in Section~4]{MR3579567}).

\subsection{Proofs of Proposition~\ref{proplogistica5} and Theorem~\ref{thmduemisure}}
We now discuss the results stated in the introduction that relate the survival of the population to the ``size'' of~$\Omega$.

\begin{proof}[Proof of Proposition~\ref{proplogistica5}]
By~\eqref{primoautovalore} and~\eqref{primoautovalore+}, we have that~$\lambda_\mu(\Omega_r)\le \lambda_{\mu^+}(\Omega_r)$, with strict inequality
if~$\mu^-\not\equiv0$ (due to~\eqref{t0430tyohfjfe21456vfds}).

{F}rom this fact, Proposition~\ref{scalingprimoautovalore} and assumption~\eqref{ipotesilambdamupiuPRE}, we infer that
\[
\lambda_{\mu}(\Omega_r) \le \lambda_{\mu^+}(\Omega_r)\le \frac{\lambda_{\mu^+}(\Omega)}{\displaystyle\inf_{s\in\Sigma} (r^{2s})}\le 1.
\]
In particular, we obtain that~$\lambda_{\mu}(\Omega_r) <1$
thanks to~\eqref{ipotesilambdamupiu}. Accordingly, the assumption in~\eqref{infsigma}
of Theorem~\ref{thmlogistica2} is satisfied with~$\sigma:=1$, and
therefore the desired result follows from Theorem~\ref{thmlogistica2}.
\end{proof}

\begin{proof}[Proof of Theorem~\ref{thmduemisure}]
We first claim that, if~$U_1\subseteq U_2 \subset \R^N$, then
\begin{equation}\label{claiminclusione}
\lambda_{\mu_i}(U_2)\leq \lambda_{\mu_i}(U_1) 
\quad \mbox{for any } i=1,2.
\end{equation}
Indeed, if~$u\in X(U_1)$, then~$u\in X(U_2)$ since~$u=0$ in~$\R^N \setminus U_1\supseteq\R^N\setminus U_2$ and
\[
(U_1\times U_1)\cup(U_1\times(\R^N\setminus U_1))\cup((\R^N\setminus U_1)\times U_1)\subseteq (U_2\times U_2)\cup(U_2\times(\R^N\setminus U_2))\cup((\R^N\setminus U_2)\times U_2).
\]
Thus, recalling the definition in~\eqref{primoautovalore+},
for any~$i=1,2$,
\[
\lambda_{\mu_i}(U_2)= \min_{u\in X(U_2)\setminus\{0\}} \dfrac{\displaystyle\int_{[0, 1]} [u]^2_s \, d\mu_i(s)}{\displaystyle\int_{U_2} |u(x)|^2 \,dx}
\leq \min_{u\in X(U_1)\setminus\{0\}} \dfrac{\displaystyle\int_{[0, 1]} [u]^2_s \, d\mu_i(s)}{\displaystyle\int_{U_1} |u(x)|^2 \,dx}
=\lambda_{\mu_i}(U_1).
\]
This establishes the claim in~\eqref{claiminclusione}.

Now, since~$\Omega$ is open and bounded, there exist~$x_0\in \Omega$ and~$R_2>R_1>0$ such that~$B_{R_1}(x_0)\subset \Omega \subset B_{R_2}(x_0)$. Thus, from~\eqref{claiminclusione} we infer that 
\begin{equation}\label{zserfcvgyuhbnjiolkm}
\lambda_{\mu_i}(B_{R_2}(x_0))\leq \lambda_{\mu_i}(\Omega)
\leq \lambda_{\mu_i}(B_{R_1}(x_0)) \quad \mbox{for any } i=1,2.
\end{equation}
Moreover, Theorem~\ref{propabc} (here~$\mu^- \equiv 0)$ gives that~$\lambda_{\mu_1}(B_{R_2}(x_0))$ and~$\lambda_{\mu_2}(B_{R_2}(x_0))$ are both 
positive.

We set
\[
s_1:=\sup\{\mbox{supp}(\mu_1)\}\qquad{\mbox{and}}\qquad s_2:=\inf\{\mbox{supp}(\mu_2)\}.
\]
By hypothesis, we know that~$s_1<s_2$.
We point out that~$\mbox{supp}(\mu_1)\subseteq [0,s_1]$ and~$\mbox{supp}(\mu_2)\subseteq [s_2,1]$. Then, by~\eqref{zserfcvgyuhbnjiolkm} and Proposition~\ref{scalingprimoautovalore}
we deduce that
\begin{equation}\label{disuguaglianzaduemisure1}
\min\{1,r^{-2s_1}\}\lambda_{\mu_1}(B_{R_2}(x_0))\leq \lambda_{\mu_1}(\Omega_r)
\leq \max\{1,r^{-2s_1}\}\lambda_{\mu_1}(B_{R_1}(x_0))
\end{equation}
and
\begin{equation}\label{disuguaglianzaduemisure2}
\min\{r^{-2s_2},r^{-2}\}\lambda_{\mu_2}(B_{R_2}(x_0))\leq \lambda_{\mu_2}(\Omega_r)
\leq \max\{r^{-2s_2},r^{-2}\}\lambda_{\mu_2}(B_{R_1}(x_0)).
\end{equation}

We also set
\[
\underline{r}:=\min\left\{1,\left(\frac{\lambda_{\mu_2}(B_{R_2}(x_0))}{\lambda_{\mu_1}(B_{R_1}(x_0))}\right)^{\frac{1}{2(s_2-s_1)}}\right\}.
\]
Then, by~\eqref{disuguaglianzaduemisure1} and~\eqref{disuguaglianzaduemisure2} we infer that,
for any~$r\in (0,\underline{r})$,
\[
\begin{aligned}
\lambda_{\mu_2}(\Omega_r)-\lambda_{\mu_1}(\Omega_r)&
\geq \min\{r^{-2s_2},r^{-2}\}\lambda_{\mu_2}(B_{R_2}(x_0))-\max\{1,r^{-2s_1}\}\lambda_{\mu_1}(B_{R_1}(x_0)) \\
&=r^{-2s_2}\lambda_{\mu_2}(B_{R_2}(x_0)) -r^{-2s_1}\lambda_{\mu_1}(B_{R_1}(x_0)) \\
&=r^{-2s_2}\Big(\lambda_{\mu_2}(B_{R_2}(x_0)) -r^{2(s_2-s_1)}\lambda_{\mu_1}(B_{R_1}(x_0)) \Big)
\\
&>0.
\end{aligned}
\]
Thus, we can take~$\tau_r \ge 0$, $\nu_r\in L^\infty(\Omega_r)$ with~$\inf_\Omega \nu_r>0$
and~$\sigma_r\in L^\infty(\Omega_r)$, such that
\begin{equation}\label{zsrfcvgyhbnjikm,lpxdrtyhbnm}
\lambda_{\mu_1}(\Omega_r)<\inf_{\Omega_r} \sigma_r\leq \sup_{\Omega_r} \sigma_r +\tau_r\leq \lambda_{\mu_2}(\Omega_r).
\end{equation}
We observe that, with this choice, assumption~\eqref{ipotesilogistica1} is satisfied. Then, by~\eqref{zsrfcvgyhbnjikm,lpxdrtyhbnm} and Theorem~\ref{thmlogistica2} we infer that problem~\eqref{problemaduemisure1} admits a nontrivial solution, while problem~\eqref{problemaduemisure2} only admits the solution~$u\equiv 0$. This establishes the first claim in Theorem~\ref{thmduemisure}.

Now, we set 
\[
\overline{r}:=\max\left\{1,\left(\frac{\lambda_{\mu_2}(B_{R_1}(x_0))}{\lambda_{\mu_1}(B_{R_2}(x_0))}\right)^{\frac{1}{2(s_2-s_1)}}\right\}
\]
and we notice that~$\overline{r}\ge\underline{r}$.

Moreover, from~\eqref{disuguaglianzaduemisure1} and~\eqref{disuguaglianzaduemisure2} we have that,
for any~$r\in (\overline{r},+\infty)$,
\[
\begin{aligned}
\lambda_{\mu_2}(\Omega_r)-\lambda_{\mu_1}(\Omega_r)&
\leq \max\{r^{-2s_2},r^{-2}\}\lambda_{\mu_2}(B_{R_1}(x_0))-\min\{1,r^{-2s_1}\}\lambda_{\mu_1}(B_{R_2}(x_0)) \\
&=r^{-2s_2}\lambda_{\mu_2}(B_{R_1}(x_0)) -r^{-2s_1}\lambda_{\mu_1}(B_{R_2}(x_0)) \\
&=r^{-2s_2}\Big(\lambda_{\mu_2}(B_{R_1}(x_0)) -r^{2(s_2-s_1)}\lambda_{\mu_1}(B_{R_2}(x_0)) \Big)
\\&<0.
\end{aligned}
\]
Hence, we can take~$\tau_r \ge 0$, $\nu_r\in L^\infty(\Omega_r)$ with~$\inf_\Omega \nu_r>0$
and~$\sigma_r\in L^\infty(\Omega_r)$, such that
\begin{equation}\label{zsrfcvgyhbnjikm,lpxdrtyhbnm2}
\lambda_{\mu_2}(\Omega_r)<\inf_{\Omega_r} \sigma_r\leq \sup_{\Omega_r} \sigma_r +\tau_r\leq \lambda_{\mu_1}(\Omega_r).
\end{equation}
Then, assumption~\eqref{ipotesilogistica1} is satisfied. Thus, from~\eqref{zsrfcvgyhbnjikm,lpxdrtyhbnm2} 
and Theorem~\ref{thmlogistica2} we infer that problem~\eqref{problemaduemisure1} only admits the solution~$u\equiv 0$, while problem~\eqref{problemaduemisure2} admits a nontrivial solution. 
This completes the proof of the second claim in Theorem~\ref{thmduemisure}.
\end{proof}

\section{Conclusion}

In this paper, we have analyzed a diffusive logistic equation of Fisher-Kolmogoroff-Petrovski-Piskunov type in a hostile environment accounting for the abundance of natural enemies surrounding an ecological niche. The logistic component of the equation deals with population competition over environmental resources, possibly incorporating an additional pollination factor. The diffusion process is modeled as a superposition of local and nonlocal operators of varying orders, capturing the heterogeneity in dispersal patterns among individuals. These patterns include Gaussian walks and L\'evy flights with distinct L\'evy exponents.

Moreover, the diffusive terms corresponding to low L\'evy exponents are allowed to exhibit opposite signs relative to standard dispersal terms. This models concentration phenomena, where individuals migrate from areas of low to high population densities, driven by factors such as mutual protection or thermal advantages.

Our results focus on the existence and nonexistence of nontrivial
equilibrium solutions, corresponding to the survival or extinction of the population. In particular, Theorem~\ref{thmlogistica2} establishes resource thresholds required for a species to survive in hostile environments, relying on the analysis of the linearized equation and its principal eigenvalue problem, as detailed in Theorem~\ref{propabc}.

Additionally, we examine the influence of the concentrating component of diffusion on survival. Theorem~\ref{thmlogistica3} provides examples where this component enhances survival, even under otherwise adverse conditions. We show that both classical and anomalous diffusion can lead to extinction, whereas the introduction of even a small concentration pattern can enable survival. These findings underscore the biological significance of concentration phenomena in protecting populations from lethal zones.

We also explore the role of anomalous diffusion in survival dynamics, both with and without concentration effects. In Theorem~\ref{thmlogistica4}, we present scenarios where the safe niche consists of two disjoint regions, which are insufficient for survival when isolated, but are capable of supporting survival when connected via nonlocal dispersal. This highlights the beneficial role of L\'evy flights in specific biological contexts.

Finally, Theorem~\ref{thmduemisure} connects the diffusive properties of the population to the size of the survival niche. For small niches, lower diffusive exponents confer a survival advantage, while for larger niches, higher diffusive exponents become more favorable.

These results contribute to a deeper understanding of how dispersal mechanisms and environmental structures interact to influence species survival, offering insights into the complex dynamics of ecological systems.

\begin{appendix}

\section{On the assumptions~\eqref{mu2} and~\eqref{mu2forte}}\label{appendixesempio}

Here we provide an explicit calculation to show how
assumptions~\eqref{mu2} and~\eqref{mu2forte} can be checked in a
concrete case. 

For this, we let~$\alpha\ge 0$ and~$1>s_1>s_2>0$, and we set 
\begin{equation*}
\mu:=\delta_1 + \delta_{s_1}-\alpha\delta_{s_2}. 
\end{equation*}
We will check that, if~$\alpha$ is sufficiently small, the assumptions in~\eqref{mu2} and~\eqref{mu2forte} are fulfilled.

We first observe that the assumptions in~\eqref{mu0} and~\eqref{mu1} hold true with~$\overline s := s_1$.
Moreover,
\[
\mu^-\big( [0,s_1)\big)= \alpha = \frac{\alpha}{2} \mu^+\big([s_1, 1]\big),
\]
which shows that~\eqref{mu2} is satisfied for any~$\gamma\ge{\alpha}/{2}$.

We now check that~\eqref{mu2forte} is also satisfied. To this end, we let
\begin{equation*}
\overline\gamma\in\left[0, \frac{ \underline\Gamma_N \,s_1}{4\overline{\Gamma}_N\,\max\{1,(2R)^2\}}\right)
\end{equation*}
with~$\overline{\Gamma}_N$ and~$\underline{\Gamma}_N$
as in~\eqref{gammas}.

Also, if~$ \alpha\in\left[0,\overline\gamma (1-s_1)\right]$, we can pick~$ \delta:=\alpha/{\overline\gamma} $
and obtain that
\[
\mu^-\big( [0,s_1)\big)= \alpha = \alpha \mu^+\big([s_1, 1-\delta]\big)
=\overline\gamma\, \delta\, \mu^+\big([s_1, 1-\delta]\big),
\]
which proves that~\eqref{mu2forte} is satisfied.

\end{appendix}

\section*{Acknowledgements} 
All the authors are members of the Australian Mathematical Society (AustMS). CS and EPL are members of the INdAM--GNAMPA.

This work has been supported by the Australian Laureate Fellowship FL190100081
and by the Australian Future Fellowship FT230100333.

\vfill

\end{document}